\newtheorem{theorem}{Theorem}
\newtheorem{proposition}[theorem]{Proposition}
\newtheorem{remark}[theorem]{Remark}
\newenvironment{proof}[1][Proof]{\textbf{#1.} }{\ \rule{0.5em}{0.5em}}
\newcommand{\Amp}{A_m^\prime}
\newcommand{\bex}{b^{\text{ex}}}
\newcommand{\bH}{H}
\newcommand{\dg}{\mathrm{diag}}
\newcommand{\eps}{\varepsilon}
\newcommand{\R}{\mathbb{R}}
\newcommand{\Smf}{\Sigma_m^f}
\newcommand{\K}{\mathcal{K}}
\newcommand{\hW}{\widehat{W}}
\newcommand{\mmax}{m_{\max}}
\newcommand{\sGM}{s^{\text{GMR}}}
\newcommand{\tql}{\textquotedblleft}
\newcommand{\tqr}{\textquotedblright}
\newcommand{\Umf}{U_m^f}
\newcommand{\VHgm}{V^{H_5}}
\newcommand{\VHhy}{V^{H_{40}}}
\newcommand{\VT}{V^{T_{5}}}
\newcommand{\WGM}{W^{\text{GMR}}}
\newcommand{\WCG}{W^{\text{CG}}}
\newcommand{\weps}{\widehat{\varepsilon}}
\newcommand{\Xex}{X^{\text{ex}}}
\newcommand{\xex}{x^{\text{ex}}}
\newcommand{\xGM}{x^{\text{GMR}}}
\newcommand{\xmk}{x_{m,k}}
\newcommand{\ymk}{y_{m,k}}%
\begin{document}

\title{Some transpose-free CG-like solvers\\for nonsymmetric ill-posed problems}

%\titlerunning{Short form of title}        % if too long for running head

\author{Silvia Gazzola         and
        Paolo Novati %etc.
}

%\authorrunning{Short form of author list} % if too long for running head

\date{}

\maketitle

\begin{abstract}
This paper introduces and analyzes an original class of Krylov
subspace methods that provide %providing
an efficient alternative to many well-known
conjugate-gradient-like (CG-like) Krylov solvers for square nonsymmetric
linear systems arising from discretizations of inverse ill-posed problems.
The main idea underlying the new methods is to consider some rank-deficient
approximations of the transpose of the system matrix, obtained by running
the (transpose-free) Arnoldi algorithm, and then apply some Krylov
solvers to a formally right-preconditioned system of equations.
%Some properties of the new solvers are derived,
Theoretical insight is given, and many numerical tests show that the new
solvers outperform classical Arnoldi-based or CG-like methods in a variety
of situations.
%Include keywords and mathematical subject classification numbers as needed.
%This paper introduces and analyzes an original class of Krylov subspace methods that provide an efficient alternative to many well-known conjugate-gradient-like (CG-like) Krylov solvers for square nonsymmetric linear systems arising from discretizations of inverse ill-posed problems. The main idea underlying the new methods is to consider some rank-deficient approximations of the transpose of the system matrix, obtained by running the (transpose-free) Arnoldi algorithm, and then apply some Krylov solvers to a formally right-preconditioned system of equations. Theoretical insight is given, and many numerical tests show that the new solvers outperform classical Arnoldi-based or CG-like methods in a variety of situations.
\end{abstract}

\section{Introduction}\label{sect:intro}

Let us consider a linear system of the form
\begin{equation}
Ax=b\,,\quad \mbox{where}\quad A\in \mathbb{R}^{N\times N},  \label{sys}
\end{equation}%
coming from a suitable discretization of an inverse ill-posed problem. In
this setting, the matrix $A$ typically has ill-determined rank, i.e., when
considering the singular value decomposition (SVD) of $A$, given by
%\begin{equation}\label{SVD}
%A = U\Sigma V^T\,,\quad\mbox{where}\quad U^TU = UU^T=I\,,\;V^TV=VV^T=I\,,\;\Sigma = \dg(\sigma_1,\dots\sigma_m)\,,
%\end{equation}
$A = U\Sigma V^T$, with $\Sigma=\dg(\sigma_1,\dots,\sigma_N)$, the singular values $\sigma_{i}\geq\sigma_{i+1}>0$, $i=1,\dots,N-1$, 
%i.e., the elements of the diagonal matrix $\Sigma$,
quickly decay and cluster at zero with no evident gap between two consecutive ones to indicate numerical rank. In particular, $A$ is
ill-conditioned. Moreover, the right-hand side vector in (\ref{sys}) is
typically affected by some unknown noise $e$, i.e., $b=b^{\text{ex}}+e$, where $\bex$ is the unknown exact version of $b$. Our
goal is to compute a meaningful approximation of the solution $x^{\text{ex}}$
of the unknown noise-free linear system $Ax^{\text{ex}}=b^{\text{ex}}$ and,
because of the ill-conditioning of $A$ and the presence of the noise $e$,
some kind of regularization should be applied to the available system (\ref{sys}) (see \cite{PCH98} for an
overview). Truncated SVD (TSVD) is a well-established regularization method, which consists in replacing (\ref{sys}) by the least square problem
$\min_{x\in\R^N}\|A_mx-b\|$, where
\begin{equation}\label{TSVD}
A_m=U_m^{A}\Sigma_m^{A}(V_m^{A})^T,\quad U_m^{A}\in\R^{N\times m},\;\Sigma_m^{A}\in\R^{m\times m},\;V_m^{A}\in\R^{N\times m},
\end{equation}
is the best rank-$m$ approximation of $A$ in the matrix 2-norm. Here $U_m^{A}$ and $V_m^{A}$ are obtained by taking the first $m$ left and right singular vectors of $A$, respectively (i.e., the first $m$ columns of $U$ and $V$, respectively), and $\Sigma_m^{A}$ is the diagonal matrix of the first $m$ singular values of $A$.
%leading $m\times m$ minor of $\Sigma$ in (\ref{SVD}).
Since the SVD is typically needed to define (\ref{TSVD}), and because of its high computational cost, TSVD is not suitable to regularize large-scale and unstructured problems. Therefore, in this paper, we are particularly interested in iterative
regularization methods, which compute an approximation of $x^{\text{ex}}$ by
leveraging the so-called \textquotedblleft
semi-convergence\textquotedblright\ phenomenon, so that regularization is achieved by an early termination
of the iterations. Iterative regularization methods typically require one matrix-vector product with $A$ and/or $A^T$ at each iteration, and therefore they can also be employed when the coefficient matrix $A$ and/or $A^T$ is not explicitly available.
%Iterative regularization methods are often the only
%computationally viable approach when dealing with large-scale unstructured
%systems, or when the coefficient matrix $A$ is not explicitly available.
Many Krylov subspace methods have been proven to be efficient
iterative regularization methods, as they typically show good accuracy
together with a fast initial convergence (see \cite{Survey} and the
references therein).

Given a matrix $C\in \mathbb{R}^{N\times N}$ and a vector $d\in
\mathbb{R}^{N}$, the Krylov subspace $\mathcal{K}_{m}(C,d)$ is defined as
\begin{equation*}
\mathcal{K}_{m}(C,d)=\mathrm{span}\{d,\,Cd,\dots ,C^{m-1}d\}\subseteq
\mathbb{R}^{N}.
\end{equation*}%
Here and in the following we assume that the dimension of $\mathcal{K}%
_{m}(C,d)$ is $m$. A Krylov subspace method is a projection method onto Krylov
subspaces, i.e., at the $m$th iteration, an approximate solution $x_{m}$ for
(\ref{sys}) is computed by imposing the conditions
\begin{equation}
x_{m}\in \mathcal{K}_{m}(C_{1},d_{1})\quad \mbox{and}\quad
r_{m}=b-Ax_{m}\perp \mathcal{K}_{m}(C_{2},d_{2}),  \label{projKry}
\end{equation}%
where, without loss of generality, the initial guess $x_{0}=0$ is assumed. Different Krylov subspace methods are obtained by varying the choice of $%
\mathcal{K}_{m}(C_{1},d_{1})$ and $\mathcal{K}_{m}(C_{2},d_{2})$ in (\ref%
{projKry}) (see \cite[Chapter 6]{Saad}), which also depends on the properties of the system at hand. A common computational approach to Krylov subspace methods consists in
generating an orthonormal basis $\{w_{1},\dots ,w_{m}\}$ for $\mathcal{K}%
_{m}(C_{1},d_{1})$ (i.e., a new vector \linebreak[4]$w_{i}\in \mathbb{R}^{N}$ is added at
the $i$th iteration, $i=1,\dots ,m$), and this can be typically achieved by
applying the Arnoldi algorithm (see \cite[\S 6.3]{Saad}). In particular,
when $C_{1}$ is symmetric, the Arnoldi algorithm greatly simplifies and
coincides with the so-called symmetric Lanczos algorithm, which is
associated to three-term recurrence formulas: this is the case for popular methods like CG, CGLS (or the mathematically equivalent LSQR), CGNE, and MINRES.

While the theoretical regularizing properties and the performance of some
CG-like solvers, such as CG, CGLS, and CGNE, are well-understood, and these
methods are well-established regularization methods (see, for instance, \cite{Han95,LancReg,PCH98,JH07}),
the same is not true for the methods based on the
Arnoldi algorithm. The authors of \cite{CLR02-2} prove that, under some assumptions on $\bex$, GMRES equipped with a stopping rule based on the discrepancy principle is a regularization method in the classical sense, meaning that $x_m$ tends to $\xex$ as the noise $e$ tends to $0$. However, recent analysis (see \cite{JH07} and the references therein) suggests that GMRES
(or even its range-restricted variant \cite{RRGMRESfirst}) might fail if the problem is highly
non-normal, as in this case the SVD components of the matrix $A$ are severely mixed in the GMRES approximate solutions;
%approximation subspace.
moreover, the approximation subspace generated by GMRES may fail to reproduce the relevant components of the solution $\xex$. These situations arise, for instance, when considering image deblurring problems characterized by a highly non-symmetric blur. %We should also stress that,

We should stress that, among all the Krylov methods mentioned so far, GMRES is the only one that can handle a nonsymmetric linear system (\ref{sys}) when $A^T$ is unavailable, or when matrix-vector multiplications with $A^T$ are impractical to compute. For this reason, it is important to investigate ways of overcoming the shortcomings of GMRES, e.g., by defining a more appropriate approximation subspace for GMRES. A common way of achieving this is to incorporate some sort of \tql preconditioning\tqr. For instance, the authors of \cite{SN} propose to incorporate into GMRES a \tql smoothing-norm preconditioner\tqr, which can enforce some additional regularity into the solution (achieving an effect similar to Tikhonov regularization in general form, as opposed to standard form). The authors of \cite{DMR15} propose to incorporate into GMRES a \tql reblurring preconditioner\tqr\ $A^\prime$, which approximates $A^T$ and is tailored for particular image deblurring problems: by doing so, the original system (\ref{sys}) is replaced by an equivalent one, whose coefficient matrix $AA^\prime$ (or $A^\prime A$) well approximates $AA^T$ (or $A^TA$, respectively), so that the problem is somewhat symmetrized. We emphasize that, here and in the following, the term \tql preconditioner\tqr\ is not used in a classical sense: indeed, these \textquotedblleft preconditioners\textquotedblright\ do not accelerate the \tql convergence\tqr\ of GMRES, but rather enforce some desirable properties into the solution subspace (and, in doing so, sometimes they may actually require more iterations than standard GMRES). Even the concept of \tql convergence\tqr\ is not well defined in this setting, as early termination must be considered in order to regularize (\ref{sys}).

This paper proposes an efficient and reliable strategy to symmetrize the coefficient matrix of system (\ref{sys}).
More precisely, after the Krylov subspace $\mathcal{K}_{m}(A,b)$ is
generated by performing $m$ iterations of the Arnoldi algorithm
applied to (\ref{sys}), which only involves $m$ matrix-vector products with the
matrix $A$, a rank-$m$ matrix $A_{m}^{\prime }\in \mathbb{R}^{N\times N}$
is formed by exploiting the quantities computed by the Arnoldi algorithm, in such a way that $AA_{m}^{\prime }$ is symmetric semi-positive definite. The
original system (\ref{sys}) is then replaced by the following \tql preconditioned\tqr, symmetric, rank-deficient problem, to be solved in the least squares sense
\begin{equation}\label{symmsys}
%AA_{m}^{\prime }y=b\quad (\text{with }x_{m}=A_{m}^{\prime }y).
y_m = \arg\min_{y\in\R^N}\|AA_{m}^{\prime }y-b\|\,,\quad\mbox{with}\quad x_{m}=A_{m}^{\prime }y_m\,.
\end{equation}
Here and in the following, $\Vert \cdot \Vert $ denotes the vectorial 2-norm or the induced matrix or operator 2-norm. Since in many situations $A_{m}^{\prime }$ is a good approximation of $A_{m}^{T}$, where $A_{m}$ is defined as in (\ref{TSVD}), one can regard the system (\ref{symmsys}) as a rank-deficient
symmetric version of (\ref{sys}), which is also a good approximation of the normal equations $AA^{T}y=b$ associated to (\ref{sys}), with $x=A^{T}y$. Furthermore, one can easily see that taking $\Amp=A_m^{T}$ and solving problem (\ref{symmsys}) is equivalent to compute a TSVD solution. Therefore, problem (\ref{symmsys}) can also be regarded to as a regularized version of (\ref{sys}). The least squares problem (\ref{symmsys}) can then be solved directly (by truncated SVD) or iteratively (by a variety of Krylov subspace methods).
%, including GMRES and RRGMRES).
Depending on the choice of the solver, transpose-free CGLS-like and transpose-free CGNE-like methods can be defined, whose accuracy will depend
on the choice of $m$.
%Although the idea of building approximations of $A^{T}$ when $A^T$ itself is difficult to define or to handle is not new (see \cite{DMR15}), such approximations are usually targeted for particular applications. The strategy for defining $%
%A_{m}^{\prime }$ presented in this paper is instead independent of the
%application at hand. 
We stress that the strategy for defining $A_{m}^{\prime }$ presented in this paper is independent of the
application at hand.
We also remark that, as we shall see, once the Arnoldi
algorithm is run to compute $A_{m}^{\prime }$, the computational cost for
solving (\ref{symmsys}) is negligible, as the computations can be arranged
in such a way that only matrices of order $m$ are involved: therefore, the
overall cost of the new methods is essentially the cost of performing $m$ iterations of the Arnoldi algorithm. Moreover, provided that $m$ is sufficiently small, the memory requirements of the new methods are not demanding.

This paper is organized as follows: Section \ref{sect:trfreeArn} surveys
some known properties of the Arnoldi algorithm, introduces the matrix $%
A_{m}^{\prime }$ appearing in (\ref{symmsys}), and derives some insightful theoretical results. Section %
\ref{sect:algo} describes different algorithmic approaches for the solution
of (\ref{symmsys}): some computational details are unfolded, and connections
with CGLS and CGNE are explored. Section \ref{sect:numexp} displays the
results of many numerical experiments, which compare the performance of the
new class of solvers for (\ref{symmsys}) with traditional Krylov methods for
(\ref{sys}). Finally, Section \ref{sect:final} draws some concluding remarks.

\section{A transpose-free \textquotedblleft symmetrization\textquotedblright\ of the Arnoldi algorithm}
\label{sect:trfreeArn}

The Arnoldi algorithm is a process for building an orthonormal basis of the
Krylov subspace $\mathcal{K}_{m}(A,b)$: $m$ steps of the Arnoldi algorithm
lead to the following matrix decomposition
%\begin{eqnarray}
%AW_{m} &=&W_{m}H_{m}+h_{m+1,m}w_{m+1}e_{m}^{T}  \label{ArnoldiDec_bis} \\
%&=&W_{m+1}\bar{H}_{m}\,,  \label{ArnoldiDec}
%\end{eqnarray}%
\begin{equation}\label{ArnoldiDec}
AW_{m} =W_{m+1}H_{m}\,,
\end{equation}%
where $W_{m}=[w_{1},\dots ,w_{m}]\in \mathbb{R}^{N\times m}$ has orthonormal
columns that span $\mathcal{K}_{m}(A,b)$, and \linebreak[4]$H_{m}\in \mathbb{R}^{(m+1)\times
m}$ is an upper Hessenberg matrix. Moreover,
\begin{equation}
w_{1}=\frac{b}{\Vert b\Vert }\,,\quad \mbox{and}\quad W_{m+1}=[W_{m}\;w_{m+1}]\in \mathbb{R}%
^{N\times (m+1)}\,. \label{projrhs}
\end{equation}%
%Here and in the following $e_{i}$ represents the $i$th real canonical basis
%vector (needed?), of appropriate dimension and $\Vert \cdot \Vert $ denotes the
%vectorial 2-norm and or the induced matrix 2-norm.
As said in the Introduction, in this paper we assume $m$
to be sufficiently small, so that $\mathcal{K}_{m}(A,b)$ is of dimension $m$
and decomposition (\ref{ArnoldiDec}) exists (i.e., the sub-diagonal elements $h_{j+1,j}$, $j=1,\dots,m-1$ of $H_m$ do not vanish, so that no breakdown occurs in the Arnoldi algorithm). If $A$ is symmetric, the Arnoldi decomposition (\ref{ArnoldiDec}%
) reduces to the symmetric Lanczos decomposition, where the matrix $H_{m}$
is tridiagonal.

GMRES is arguably the most popular Krylov method based on the Arnoldi
algorithm. At the $m$th step of GMRES (see \cite[\S 6.5]{Saad}), one updates the decomposition (\ref%
{ArnoldiDec}), and an approximation $\xGM_{m}$ of the solution of the original
linear system is obtained by taking
\begin{equation}
\xGM_{m}=W_{m}\sGM_{m}\,,\quad \mbox{where}\quad \sGM_{m}=\arg \min_{s\in
\mathbb{R}^{m}}\left\Vert H_{m}s-\Vert b\Vert e_{1}\right\Vert \,,
\label{GMRES}
\end{equation}%
where $e_1$ is the first canonical basis vector of $\R^{m+1}$. Thanks to (\ref{ArnoldiDec}), $\xGM_m$ enjoys the optimality property
\begin{equation}\label{optGM}
\xGM_{m}=\arg \min_{x_{m}\in \mathcal{K}_{m}(A,b)}\left\Vert
Ax_{m}-b\right\Vert .
\end{equation}

Let us now assume that $m$ steps of the Arnoldi algorithm are performed, so that
the quantities in (\ref{ArnoldiDec}) are computed, and let us consider the
right-preconditioned system (\ref{symmsys}), where $A_{m}^{\prime }$ is
defined as
\begin{equation}\label{Aprime}
A_{m}^{\prime }=W_{m}H_{m}^{T}W_{m+1}^{T}\in \mathbb{R}^{N\times N}\,.
\end{equation}%
The rank of $A_{m}^{\prime }$ is $m$, as it can be easily seen from (\ref%
{ArnoldiDec}). Exploiting once again relation (\ref{ArnoldiDec}), one realizes that
\begin{equation}
AA_{m}^{\prime }=AW_{m}H_{m}^{T}W_{m+1}^{T}=W_{m+1}H_{m}H%
_{m}^{T}W_{m+1}^{T}=C_{m}C_{m}^{T}\,,  \label{SSPD}
\end{equation}%
where $C_{m}=W_{m+1}H_{m}\in \mathbb{R}^{N\times m}$.
%Clearly $AA_{m}^{\prime }$ is symmetric semi-positive definite.
Therefore, the least square problem (\ref{symmsys}) can be reformulated as
%should be solved in the least squares sense, i.e.,
\begin{equation}\label{newProjPb}
y_{m}=\arg \min_{y\in \mathbb{R}^{N}}\left\Vert C_{m}C_{m}^{T}y-b\right\Vert
\,,\quad\mbox{with}\quad x_m=\Amp y_m.
\end{equation}%
%The approximate solution $x_{m}$ of the original system (\ref{sys}) is then
%obtained by taking $x_{m}=A_{m}^{\prime }y_{m}$.
Directly from definition (\ref%
{Aprime}), and recalling that $\mathrm{range}(W_{m})=\mathcal{K}_{m}(A,b)$, one can immediately see that $x_m\in \mathcal{K}_{m}(A,b)$, as computed in (\ref{newProjPb}). Therefore, by (\ref{optGM}), one has
\begin{equation}\label{optGMres}
\left\Vert A\xGM_{m}-b\right\Vert \leq \left\Vert
Ax_{m}-b\right\Vert .
\end{equation}
The following proposition sheds light on the links between the solutions of problems (\ref{sys}) and (\ref{newProjPb}).

\begin{proposition}\label{prop:sols}
Let $y_m\in\R^N$ be a solution of $C_{m}C_{m}^{T}y=b$.
%a minimizer of $\|C_{m}C_{m}^{T}y-b\|$. Then $x_m=W_ms_m\in\R^N$
%$x_m=A_{m}^{^{\prime }}y_m\in\range(W_{m})$
%$x_m=\Amp y_m\in\range(W_{m})$
Then $x_m=W_ms_m\in\R^N$ solves $Ax=b$, where $s_m=\bH_m^TW_{m+1}^Ty_m\in\R^m$. Conversely, let $x_m=W_{m}s_m$ be the solution of (\ref{sys}), where $s_m\in\R^m$. Then the system
\begin{equation}\label{defz}
\bH_{m}\bH_{m}^{T}t=\|b\|e_1\,%W_{m+1}^{T}b,
\end{equation}
has a solution $t_m\in\R^{m+1}$, and $y_m=W_{m+1}t_m\in\R^N$ is the minimal norm solution of $C_mC_m^Ty=b$.
%minimizer of $\|C_mC_m^Ty-b\|$.
\end{proposition}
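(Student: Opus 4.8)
The plan is to rely on only a few ingredients: the Arnoldi relation $AW_m=W_{m+1}\bH_m$, the orthonormality $W_{m+1}^TW_{m+1}=I_{m+1}$, the reformulation $C_mC_m^T=W_{m+1}\bH_m\bH_m^TW_{m+1}^T$ already established in (\ref{SSPD}), and the two elementary facts that, for any real matrix $M$, $\range(MM^T)=\range(M)$ and $\mathrm{null}(MM^T)=\mathrm{null}(M^T)$. I would also record at the outset that $b=\|b\|w_1=\|b\|W_{m+1}e_1$, which lets me pass freely between $b$ and $\|b\|e_1$.

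For the forward implication I would simply substitute. With $s_m=\bH_m^TW_{m+1}^Ty_m$ and $x_m=W_ms_m$, the Arnoldi relation gives $Ax_m=AW_ms_m=W_{m+1}\bH_ms_m=W_{m+1}\bH_m\bH_m^TW_{m+1}^Ty_m=C_mC_m^Ty_m=b$, which is exactly the claim. No obstacle arises here.

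For the converse I would first establish solvability of (\ref{defz}). Starting from $AW_ms_m=b$, the Arnoldi relation yields $W_{m+1}\bH_ms_m=\|b\|W_{m+1}e_1$, and left-multiplying by $W_{m+1}^T$ and using orthonormality gives $\bH_ms_m=\|b\|e_1$. Hence $\|b\|e_1\in\range(\bH_m)=\range(\bH_m\bH_m^T)$, so (\ref{defz}) is consistent; I would take $t_m$ to be its minimal-norm solution. A direct check then confirms $C_mC_m^Ty_m=W_{m+1}\bH_m\bH_m^Tt_m=\|b\|W_{m+1}e_1=b$, so $y_m=W_{m+1}t_m$ indeed solves $C_mC_m^Ty=b$.

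The delicate point, and the step I expect to require the most care, is the minimality claim. I would parametrize an arbitrary solution $y$ of $C_mC_m^Ty=b$ by splitting $y=W_{m+1}p+q$ with $W_{m+1}^Tq=0$; substituting and using orthonormality reduces the equation to $\bH_m\bH_m^Tp=\|b\|e_1$, so $p$ must solve (\ref{defz}) while $q$ ranges freely over $\range(W_{m+1})^\perp$. Since the two components are orthogonal and $W_{m+1}$ is an isometry, $\|y\|^2=\|p\|^2+\|q\|^2$, whence the norm is minimized precisely when $q=0$ and $p$ is the minimal-norm solution of (\ref{defz}), namely $y_m=W_{m+1}t_m$. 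I would explicitly flag that, for the statement to hold verbatim, $t_m$ must be read as the minimal-norm solution of (\ref{defz}): since $\bH_m\bH_m^T$ has rank $m$ on $\R^{(m+1)\times(m+1)}$, solutions of (\ref{defz}) are not unique, and only the minimal-norm choice of $t_m$ yields the minimal-norm $y_m$.
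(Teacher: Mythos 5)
Your proof is correct, and in the converse direction it takes a genuinely different route from the paper. Where you establish solvability of (\ref{defz}) by the elementary identity $\range(\bH_m\bH_m^T)=\range(\bH_m)$ together with $\bH_ms_m=\|b\|e_1$, the paper instead introduces both the economy-size and full-size SVD of $\bH_m$, characterizes solvability of $\bH_ms_m=\|b\|e_1$ by the vanishing of $u_{m+1}^Te_1$, and exhibits a solution $t_m$ of (\ref{defz}) through a block-diagonal system in the transformed variable $\widehat{t}=(\Umf)^Tt$. Your range argument is shorter and avoids the SVD machinery entirely; the paper's SVD construction, on the other hand, is not wasted effort in context, since the same decomposition (\ref{svdH}) is reused throughout Section \ref{sect:trfreeArn} and later. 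Your treatment of the minimality claim is also tighter than the paper's: by splitting an arbitrary solution as $y=W_{m+1}p+q$ with $W_{m+1}^Tq=0$, reducing to $\bH_m\bH_m^Tp=\|b\|e_1$, and using $\|y\|^2=\|p\|^2+\|q\|^2$, you characterize the entire solution set, whereas the paper argues somewhat loosely through the auxiliary system $W_{m+1}^TW_{m+1}\tilde{t}=t_m$. Your closing caveat is a genuine catch: since $\mathrm{null}(\bH_m\bH_m^T)=\mathrm{null}(\bH_m^T)$ is one-dimensional (no breakdown means $\bH_m$ has full column rank $m$), solutions of (\ref{defz}) form an affine line, and $y_m=W_{m+1}t_m$ is the minimal-norm solution of $C_mC_m^Ty=b$ only when $t_m$ is itself the minimal-norm solution of (\ref{defz}); the paper leaves the last component of $\widehat{t}$ free and implicitly sets it to zero without comment, so your explicit flag repairs a small imprecision in the published argument.
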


\begin{proof}
The first part obviously follows from (\ref{SSPD}), as
\[
b = C_mC_m^Ty_m = A\Amp y_m = AW_m\bH_m^T W_{m+1}^Ty_m = AW_ms_m = Ax_m\,.
\]
To prove the second part, one should first consider the Arnoldi decomposition (\ref{ArnoldiDec}), so that
\[
b = Ax_m = AW_ms_m = W_{m+1}\bH_ms_m\,,
\]
and, thanks to the first equality in (\ref{projrhs}),
\begin{equation}\label{ArnSolution}
\bH_ms_m = \|b\|e_1\,.
\end{equation}
Now, consider the economy-size SVD of $\bH_{m}$, given by
\begin{equation}\label{svdH}
H_m=U_m\Sigma_mV_m^T\,,\quad\mbox{where}\quad U_m\in\R^{(m+1)\times m},\;\Sigma_m\in\R^{m\times m},\; V_m\in\R^{m\times m},
\end{equation}
and the associated full-size SVD, given by
\[
H_m=\Umf\Smf V_m^T\,,\quad\mbox{where}\quad \Umf=[U_m\; u_{m+1}]\in\R^{(m+1)\times (m+1)},\;\Smf=\left[\begin{array}{c}
\Sigma_m\\
0
\end{array}\right]\in\R^{(m+1)\times m}.
\]
Note that (\ref{ArnSolution}) holds if and only if
\[
\Umf\Smf V_m^T s_m = \Umf(\Umf)^T(\|b\|e_1)\,,
\]
which is equivalent to asking the last component of $(\Umf)^Te_1$ (i.e., $u_{m+1}^Te_1$) to be zero. Then there exists a solution $t_m\in\R^{m+1}$ of (\ref{defz}), as
\[
\Umf\Smf(\Smf)^T\underbrace{(\Umf)^Tt}_{\widehat{t}\,\in\,\R^{m+1}}=\|b\|e_1\quad\mbox{implies}\quad
\left[\begin{array}{cc}
\Sigma_m^2 & \\
& 0\end{array}\right]
\widehat{t}=
\left[\begin{array}{c}
U_m^T(\|b\|e_1)\\
0\end{array}\right]\,.
\]
At this point, each $y$ such that $W_{m+1}^{T}y=t_m$ satisfies
\[
\bH_{m}\bH_{m}^{T}W_{m+1}^{T}y =\|b\|e_1\,.
\]
By multiplying both terms by $W_{m+1}$ from the left, and by exploiting the first equality in (\ref{projrhs}), one obtains $W_{m+1}\bH_{m}\bH_{m}^{T}W_{m+1}^{T}y =b$, which, thanks to (\ref{SSPD}), can be rewritten as
\begin{equation}\label{minnorm}
C_mC_m^Ty=b\,.
\end{equation}
Therefore, the minimum norm solution $y_m$ of (\ref{minnorm}) satisfies $W_{m+1}^Ty=t_m$, and is obtained by computing the solution $\tilde{t}_m\in\R^{m+1}$ of $W_{m+1}^TW_{m+1}\tilde{t}=t_m$ and taking $y_m = W_{m+1}\tilde{t}_m$. Since $W_{m+1}^TW_{m+1}=I$, $\tilde{t}_m = t_m$, and $y_m=W_{m+1}t_m$.
\end{proof}

Proposition \ref{prop:sols} essentially states that solving (\ref{sys}) by an Arnoldi-based method is equivalent to solve (\ref{newProjPb}). More specifically,  whenever the solution of (\ref{sys}) can be computed by performing $m$ steps of a solver based on the Arnoldi algorithm (such as GMRES), a minimal
norm solution of (\ref{newProjPb}) can be recovered by solving the projected
symmetric semi-positive definite system (\ref{defz}). However, as explained
in Section \ref{sect:intro}, when dealing with ill-posed systems one is not interested in fully solving (\ref{sys}) and (\ref{newProjPb}),
and an iterative solver should be stopped reasonably early. Because of this,
in the next section we will derive a variety of approaches for regularizing
problem (\ref{newProjPb}).
%still based on Krylov subspace methods.
We also remark that, as emphasized in \cite{JH07}, the performance of GMRES as a regularization method can sometimes be unsatisfactory, due to the unwanted mixing of the SVD components of $A$ in the GMRES approximation subspaces and, therefore, in the GMRES approximate solutions. Since the approximation subspaces for GMRES and for any method applied to (\ref{newProjPb}) coincide, one may suspect the approximate solutions of (\ref{newProjPb}) to be affected by the same issue. As we shall see in the next section, the SVD mixing is somewhat damped in (\ref{newProjPb}), depending on the chosen solver.
%and depending on the solver used for (\ref{newProjPb}), not very severe.
In the remaining part of this section we provide some
motivations underlying the choice of (\ref{Aprime}), which are connected to
the regularizing properties of the Arnoldi algorithm.

%Let
%\begin{equation}
%H_{m}=\bar{U}_{m}\bar{\Sigma}_{m}\bar{V}_{m}^{T},\quad \mbox{with }%
%\bar{U}_{m}\in \mathbb{R}^{(m+1)\times m},:\bar{\Sigma}_{m}=\mbox{diag}(\bar{%
%\sigma}_{i}^{(m)})\in \mathbb{R}^{m\times m},:\bar{V}_{m}\in \mathbb{R}%
%^{m\times m},  \label{svdh}
%\end{equation}%
%be the economy-size SVD of $H_{m}$. Now take
Define
\begin{equation}
\widehat{U}_{m}=W_{m+1}U_{m}=[\widehat{u}_{1},\dots ,\widehat{u}%
_{m}]\in \mathbb{R}^{N\times m}\quad \mbox{and}\quad \widehat{V}_{m}=W_{m}%
V_{m}=[\widehat{v}_{1},\dots ,\widehat{v}_{m}]\in \mathbb{R}^{N\times
m}\,,  \label{svdhfull}
\end{equation}%
where $U_m$ and $V_m$ are the matrices of the left and right singular vectors of $H_m$ (\ref{svdH}), respectively, and define
\begin{equation}
\widehat{A}_{m}=W_{m+1}H_{m}W_{m}^{T}=\widehat{U}_{m}\Sigma_{m}%
\widehat{V}_{m}^{T}\quad \text{(note that $\widehat{A}_{m}=(A_{m}^{\prime
})^{T}$).}  \label{Akry}
\end{equation}%
One can easily show that the (T)SVD of $\widehat{A}_{m}$ is given by $%
\widehat{U}_{m}\Sigma_{m}\widehat{V}_{m}^{T}$, and that the
Moore-Penrose pseudo-inverse $\widehat{A}_{m}^{\dagger }$ of $\widehat{A}%
_{m} $ is the regularized inverse (as defined in \cite[\S 4.4]{PCH98})
associated to the $m$th iteration of GMRES. Indeed, by exploiting relation (\ref{GMRES}), the first equality in (\ref{projrhs}), and the TSVD (\ref{Akry}), one can write
\begin{equation*}
\xGM_{m}=W_{m}\sGM_{m}=W_{m}H_{m}^{\dagger }(\Vert b\Vert
e_{1})=W_{m}H_{m}^{\dagger }W_{m+1}^{T}b=\widehat{V}_{m}\Sigma_{m}^{-1}\widehat{U}_{m}^{T}b=\widehat{A}_{m}^{\dagger }b\,.
\end{equation*}
In order for a (generic) regularization method to be successful, the
regularized matrix should contain information about the dominant singular
values of the original matrix $A$, and filter out the influence of the small
ones. If $\widehat{A}_{m}$ is a good regularized approximation of $A$, then
using $\widehat{A}_{m}^{T}=\Amp$ to approximate $A^{T}$ is a meaningful choice.

If $A$ is severely ill-conditioned, the authors of \cite{Survey,NR14}
numerically show that $\widehat{A}_{m}$ quickly inherits the spectral
properties of $A$. In particular, the following relations hold for $%
k=1,\dots ,m$%
\begin{eqnarray}
A\widehat{v}_{k}-{\sigma}_{k}^{(m)}\widehat{u}_{k} &=&0\,,  \label{s1} \\
W_{m}^{T}(A^{T}\widehat{u}_{k}-{\sigma}_{k}^{(m)}\widehat{v}_{k}) &=&0\,,
\notag
\end{eqnarray}%
where $\sigma_k^{(m)}$, $k\leq m$, is the $k$th singular value of $H_m$. Moreover, working in a continuous setting and under the hypothesis that $A$ is a Hilbert-Schmidt operator of
infinite rank (see \cite[Chapter 2]{Ri} for a background), whose singular values form a $\ell _{2}$ sequence, in \cite{N} it has been shown
that
\begin{equation}
\left\Vert A^{T}\widehat{u}_{k}-{\sigma}_{k}^{(m)}\widehat{v}%
_{k}\right\Vert \rightarrow 0\quad \text{as}\quad m\rightarrow \infty ,  \label{s2}
\end{equation}%
where the decay rate is closely connected to the decay rate of the singular
values of $A$. This property is inherited by the discrete case
whenever $A$ is a suitable discretization of a Hilbert-Schmidt operator.
Note that this class of operators includes Fredholm intergral operators of the first
kind with $L_{2}$ kernels. As a consequence of (\ref{s1}) and (\ref{s2}), in many relevant situations the dominant singular values of $A$
are well approximated by the singular values of $H_{m}$ (see \cite%
{Survey} for many numerical examples). Therefore, $\widehat{A}_{m}$ as defined in (\ref{Akry}) may represent a good
regularized approximation of $A$ for a variety of problems.

\section{Solving the \tql preconditioned\tqr\ problems}\label{sect:algo}

This section proposes two different iterative techniques to solve the rank-deficient symmetric least squares problem (\ref{newProjPb}), and therefore to compute a regularized solution of (\ref{sys}). Thanks to the definition of $C_{m}$, decomposition (\ref{ArnoldiDec}),
and Proposition \ref{prop:sols}, one can rewrite (\ref{newProjPb}) as
\begin{equation}
y_{m}=\arg \min_{y\in \mathbb{R}^{N}}\left\Vert C_{m}C_{m}^{T}y-b\right\Vert
=W_{m+1}\arg \min_{t\in \mathbb{R}^{m+1}}\left\Vert H_{m}H%
_{m}^{T}t-\Vert b\Vert e_{1}\right\Vert =W_{m+1}t_{m}.  \label{newProjPbRW}
\end{equation}%
By using the above reformulation, it is clear that solving system (\ref{newProjPb}) does not require a significant computational overload with
respect to solving system (\ref{sys}) by any standard Arnoldi-based method
(such as GMRES). Indeed, once $m$ iterations of the Arnoldi algorithm have been performed, with $m\ll N$, all the additional computations for getting (\ref{newProjPbRW}) are executed in dimension $m$, so that the
computational cost of any algorithm for (\ref{newProjPbRW}) is dominated by the cost of the Arnoldi algorithm. Even the storage requirement of any algorithm for the solution of (\ref{newProjPb}) (or (\ref{newProjPbRW})) is dominated by the storage requirement of the Arnoldi algorithm: namely, the cost of storing the matrix $W_{m+1}\in\R^{N\times (m+1)}$. Indeed, the rank-$m$ preconditioner $\Amp$ (\ref{Aprime}) can be stored in factored form, in order to recover $x_m$ (\ref{newProjPb}). Moreover, the residual associated to (\ref{sys}) can be conveniently
monitored in reduced dimension, as
\begin{equation*}
\Vert b-Ax_{m}\Vert =\Vert b-AA_{m}^{\prime }y_{m}\Vert =\Vert
b-C_{m}C_{m}^{T}y_{m}\Vert =\Vert \Vert b\Vert e_{1}-H_{m}H%
_{m}^{T}t_{m}\Vert .
\end{equation*}%
%
%
%Since the Krylov subspaces generated by the Arnoldi algorithm retain the error that affects the vector $b$, The solution of the projected problem in (\ref{newProjPbRW}) requires the use of some regularization
%
Since the starting vector $b$ of Krylov subspaces generated by the Arnoldi
algorithm (\ref{ArnoldiDec}), (\ref{projrhs}) is affected by some noise, noisy components are
retained in $H_{m}$ and $W_{m}$, so that the vector $t_{m}$ in (\ref%
{newProjPbRW}) should be computed by applying some regularization to the
(noisy) projected problem
\begin{equation}
\min_{t\in \mathbb{R}^{m+1}}\left\Vert H_{m}H_{m}^{T}t-\Vert
b\Vert e_{1}\right\Vert .  \label{sp}
\end{equation}%
Of course the noise propagation may be somehow damped by working with a
range-restricted approach that consists in using $Ab$ instead of $b$ as
starting vector for the Arnoldi process \cite{RRGMRESfirst}. We remark that the theory developed
in the present paper can be easily rearranged to work in this setting.

Direct methods such as Tikhonov regularization or TSVD can be easily applied
to (\ref{sp}), the latter being particularly meaningful because $%
H_{m}H_{m}^{T}$ is rank-deficient. However, in this paper, we
are interested in using an iterative approach for solving (\ref{newProjPb})
or (\ref{sp}), once the dimension $m$ has been fixed.

\subsection{A transpose-free CGLS-like method}
\label{sect:tfCGLS}

Consider computing an approximation $y_{m,k}$ of $y_{m}$ in (\ref%
{newProjPb}) by applying $k$ iterations of the MINRES method. This is equivalent to requiring
%implies that
\begin{equation}
y_{m,k}\in \mathcal{K}_{k}(C_{m}C_{m}^{T},b)\,,\quad \mbox{and}\quad
b-C_{m}C_{m}^{T}y_{m,k}\,\perp\, (C_{m}C_{m}^{T})\mathcal{K}%
_{k}(C_{m}C_{m}^{T},b)\,,\quad k\leq m\,.  \label{condGMRES}
\end{equation}%
By definition, and by exploiting the Arnoldi algorithm (\ref{ArnoldiDec}),
we rewrite
\begin{equation}
C_{m}C_{m}^{T}=W_{m+1}H_{m}H_{m}^{T}W_{m+1}^{T}=A\underbrace{%
W_{m}W_{m}^{T}}_{=P_{m}}A^{T},  \label{projMatr}
\end{equation}%
where $P_{m}$ is the orthogonal projection onto $\mathcal{K}_{m}(A,b)$.
%By recycling some of the derivations in Section \ref{sect:tfCGNE} (and, in
%particular, by using extensively (\ref{projMatr})),
The first condition in (\ref{condGMRES}), together with (\ref{ArnoldiDec}) and the above relation, implies
\begin{equation}\label{xTFCGLS}
x_{m,k}=W_mH_m^TW_{m+1}^T\ymk=W_mW_m^TA^T\ymk = P_mA^T\ymk\,,%A_{m}^{\prime }y_{m,k}
%\in P_{m}A^{T}\mathcal{K}_{k}(AP_{m}A^{T},b)
%=P_{m}\mathcal{K}_{k}(A^{T}AP_{m},A^{T}b)=\mathcal{K}%
%_{k}(P_{m}A^{T}A,P_{m}A^{T}b)\,.
\end{equation}%
so that
\[
\xmk\in P_{m}A^{T}\mathcal{K}_{k}(AP_{m}A^{T},b)=\mathcal{K}_{k}(P_{m}A^{T}A,P_{m}A^{T}b)\,.
\]
Similarly, the second condition in (\ref{condGMRES}) implies
\begin{equation*}
b-AP_{m}A^{T}y_{m,k}\,\perp \,AP_{m}A^{T}\mathcal{K}_{k}(AP_{m}A^{T},b)\,,
\end{equation*}%
and, thanks to (\ref{xTFCGLS}), it can be equivalently rewritten as
\begin{equation*}
b-Ax_{m,k}\,\perp \,AP_{m}A^{T}\mathcal{K}_{k}(AP_{m}A^{T},b)=A\mathcal{K}%
_{k}(P_{m}A^{T}A,P_{m}A^{T}b)\,.
\end{equation*}%
We can summarize the above arguments in the following

\begin{proposition}
\label{prop:tfCGLS} For any given $m\geq 1$ the sequence $\left\{
x_{m,k}\right\} _{k\leq m}$ obtained by applying $k$ steps of the MINRES
method to problem (\ref{newProjPb}) is the result of a Krylov method defined
by
\begin{equation}
x_{m,k}\in \mathcal{K}_{k}(P_{m}A^{T}A,P_{m}A^{T}b)\quad \mbox{and}\quad
b-Ax_{m,k}\perp A\mathcal{K}_{k}(P_{m}A^{T}A,P_{m}A^{T}b)\,.  \label{tfCGLS}
\end{equation}
\end{proposition}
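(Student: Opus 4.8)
The plan is to show that the two characterizing conditions of Proposition~\ref{prop:tfCGLS} are nothing but a transcription of the MINRES conditions \eqref{condGMRES} through the change of variables $\xmk = P_mA^T\ymk$. In fact, almost all the work has already been carried out in the discussion preceding the statement, so the proof is largely a matter of assembling the pieces in the right order. First I would recall the two defining conditions of MINRES applied to \eqref{newProjPb}, namely \eqref{condGMRES}, and the key algebraic identity \eqref{projMatr}, which rewrites $C_mC_m^T = AP_mA^T$ with $P_m=W_mW_m^T$ the orthogonal projector onto $\KmAb$.

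The heart of the argument is the subspace identity
\[
P_mA^T\K_k(AP_mA^T,b)=\K_k(P_mA^TA,P_mA^Tb)\,.
\]
I would prove this by inspecting a generic spanning vector. A typical element of $\K_k(AP_mA^T,b)$ is a polynomial $p(AP_mA^T)b$ of degree $<k$; applying $P_mA^T$ on the left and repeatedly using the factorization $P_mA^T(AP_mA^T)^j = (P_mA^TA)^j P_mA^T$ (which follows by induction, since $P_m$ commutes with itself in the sense $P_m^2=P_m$ and one simply regroups the factors) shows that $P_mA^Tp(AP_mA^T)b = p(P_mA^TA)P_mA^Tb$, which lies in $\K_k(P_mA^TA,P_mA^Tb)$. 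This gives the first membership condition in \eqref{tfCGLS} once we substitute \eqref{xTFCGLS}, i.e.\ $\xmk = P_mA^T\ymk$, into the first condition of \eqref{condGMRES}. For the orthogonality condition, I would start from the second condition in \eqref{condGMRES}, rewrite $C_mC_m^T=AP_mA^T$ via \eqref{projMatr}, and use \eqref{xTFCGLS} to replace $AP_mA^T\ymk$ by $A\xmk$, so that the residual becomes $b-A\xmk$; the test space $AP_mA^T\K_k(AP_mA^T,b)$ is then rewritten as $A\,\K_k(P_mA^TA,P_mA^Tb)$ by pulling one factor $AP_mA^T = A(P_mA^T)$ out and applying the same subspace identity to the remaining $P_mA^T\K_k(AP_mA^T,b)$.

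The main obstacle, such as it is, is purely bookkeeping: one must be careful that the change of variables $\ymk\mapsto\xmk=P_mA^T\ymk$ is applied consistently in \emph{both} conditions and that the projector $P_m$ is correctly propagated through the polynomial in the commutation step. In particular I would want to verify that no information is lost in passing from the $y$-space conditions to the $x$-space conditions—that is, that the map $y\mapsto P_mA^Ty$ does not collapse the relevant Krylov structure. This is guaranteed because $\ymk\in\range(C_m)=\range(AW_m)$ by the first MINRES condition, so the correspondence between the two formulations is faithful on the iterates actually produced. Once these points are checked, the proposition follows immediately, since \eqref{tfCGLS} is exactly the pair of conditions obtained, and these are precisely the defining conditions \eqref{projKry} of a Krylov subspace method with $C_1=C_2=P_mA^TA$ and appropriately projected right-hand sides, which is the sense in which $\{\xmk\}_{k\le m}$ is \emph{the result of a Krylov method}.
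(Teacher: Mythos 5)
Your proposal is correct and follows essentially the same route as the paper, which likewise passes from the MINRES conditions (\ref{condGMRES}) to (\ref{tfCGLS}) via the identity (\ref{projMatr}), the substitution $x_{m,k}=P_mA^Ty_{m,k}$ of (\ref{xTFCGLS}), and the commutation $P_mA^T\,p(AP_mA^T)b=p(P_mA^TA)\,P_mA^Tb$ (which the paper states without the inductive detail you supply; note this is pure regrouping of the form $B\,p(AB)=p(BA)\,B$ with $B=P_mA^T$, and idempotency of $P_m$ plays no role, contrary to your parenthetical). One caveat: your side claim that $y_{m,k}\in\mathcal{R}(C_m)=\mathcal{R}(AW_m)$ is false in general, since $b\in\mathcal{K}_k(C_mC_m^T,b)$ need not lie in $A\mathcal{K}_m(A,b)$ unless the projected system is consistent (cf.\ the condition $u_{m+1}^Te_1=0$ in the proof of Proposition \ref{prop:sols}) --- but this remark is superfluous, because the proposition, exactly as argued in the paper, only requires the forward implication from (\ref{condGMRES}) to (\ref{tfCGLS}) under the change of variables.
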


The above proposition has two important consequences. Firstly, thanks to a
well-known characterization of projection methods (see \cite[\S 5.2]{Saad}),
the residual $b-Ax_{m,k}$ in (\ref{tfCGLS}) has minimal norm among all the
residuals $b-A\widehat{x}_{m,k}$, with \linebreak[4]$\widehat{x}_{m,k}\in \mathcal{K}_{k}(P_{m}A^{T}A,P_{m}A^{T}b)$.
Secondly, recall that, through an implicit construction of the Krylov subspaces $\mathcal{K}_{k}(A^{T}A,A^{T}b)$,
CGLS generates a sequence of approximate solutions $\left\{ x_{k}^{\text{CGLS}}\right\} _{k\geq 1}$ of (\ref{sys}) such that
\begin{equation}
x_{k}^{\text{CGLS}}\in \mathcal{K}_{k}(A^{T}A,A^{T}b)\quad \mbox{and}\quad
b-Ax_{k}^{\text{CGLS}}\perp A\mathcal{K}_{k}(A^{T}A,A^{T}b)\,.  \label{CGLS}
\end{equation}%
%Basically,
Instead of the approximation subspace $\mathcal{K}_{m}(A^{T}A,A^{T}b)$ considered in (\ref{CGLS}), method (\ref{tfCGLS}) implicitly builds a Krylov subspace where the action of $A^{T}$ is replaced by its projection $P_mA^T$ onto $\mathcal{K}_{m}(A,b)$. Therefore, if $P_m=I$, conditions (\ref{tfCGLS}) and (\ref{CGLS}) are equivalent. In this sense, the new method (\ref{tfCGLS}) can be regarded as a transpose-free variant of a CGLS-like method, and from now on it will be simply referred
to as TF-CGLS; correspondingly, the vector $x_{m,k}$ in (\ref{tfCGLS}) will be denoted as $x_{m,k}^{\text{LS}}$.
 \begin{remark}\label{rem:AdvDraw}\emph{
 The TF-CGLS method has two clear advantages over CGLS: it does not require knowledge of $A^{T}$, and it
basically needs only one matrix-vector multiplication with $A$ at each step (to initially generate $W_{m+1}$ and $H_m$), as
opposed to one matrix-vector product with $A$ and one matrix-vector product
with $A^{T}$ at each step of CGLS. Indeed, the additional $k$ MINRES iterations required by TF-CGLS to compute the solution of (\ref{newProjPbRW}) can be performed on the projected problem (\ref{sp}) of order $m+1$. Of course each
approximate solution $\{x_{m,k}\}_{k\leq m}$ belongs to $\mathcal{K}%
_{m}(A,b)$ (directly by (\ref{tfCGLS}) and by the definition of $P_{m}$ in (\ref{projMatr})).
%Indeed, the vector resulting from a matrix-vector product with $%
%P_{m}$ belongs to $\mathcal{K}_{m}(A,b)$.
However, if $\mathcal{K}_{m}(A,b)$ well captures the features of the
solution that we wish to recover, then multiplication by $P_{m}$ does not spoil the approximation
subspace. Provided that a meaningful regularized solution can be recovered by TSVD (i.e., the columns of $V_m^A$ are a good basis for a regularized solution), this is eventually equivalent to requiring that $\widehat{A}_m$ in (\ref{Akry}) inherits the spectral properties of $A$ (see relations (\ref{s1}) and (\ref{s2})).
}
\end{remark}
%
%In this sense, the new method (\ref{tfCGLS}) can be
%regarded as a transpose-free variant of CGLS, that will be denoted by
%TF-CGLS; correspondingly, the vector $x_{m,k}$ (in (\ref{tfCGLS})) will be
%denoted as $x_{m,k}^{\text{LS}}$. Analogously to the TF-CGNE method
%described in Section \ref{sect:tfCGNE}, TF-CGLS does not require knowledge
%of $A^{T}$ (and needs only one matrix-vector multiplication per iteration),
%and the approximate solution $\{x_{m,k}\}_{k\leq m}$ belongs to $\mathcal{K}%
%_{m}(A,b)$. Basically, instead of the approximation subspace $\mathcal{K}%
%_{m}(A^{T}A,A^{T}b)$, TF-CGLS builds a Krylov subspace where the action of $%
%A^{T}$ is replaced by its projection on $\mathcal{K}_{m}(A,b)$. We remark
%that CGLS as a regularization method generally performs much better than
%CGNE. Indeed, at the $m$th iteration, CGLS minimizes the residual among all
%the vectors in $\mathcal{K}_{m}(A^{T}A,A^{T}b)$ and, even if $b$ is
%corrupted by some noise, this is still a meaningful condition: also a direct
%regularization method such as TSVD minimizes the residual $\Vert b-Ax\Vert $
%over all the vectors $x$ belonging to the linear subspace spanned by the
%first $m$ right singular vectors (see Section \ref{sect:intro}).
%
\begin{remark}\label{rem:hybrid}\emph{
Hybrid regularization methods \cite{O'LS} consider additional direct regularization (such as TSVD) within each iteration of a regularizing iterative method. We claim that TF-CGLS can be somewhat regarded as a hybrid regularization method. Indeed, considering the first relation in (\ref{condGMRES}) and exploiting (\ref{projMatr}), one can straightforwardly rewrite
\[
\ymk\in\K_k(C_mC_m^T,b)=W_{m+1}\K_k(H_mH_m^T,\|b\|e_1)\,,
\]
so that
\[
\xmk\in(W_mH_m^TW_{m+1}^T)W_{m+1}\K_k(H_mH_m^T,\|b\|e_1)=W_m\K_k(H_m^TH_m,H_m^T\|b\|e_1)\,,
\]
or, equivalently,
\begin{equation}\label{hybr1}
\xmk = W_mt_k\,,\quad\mbox{where}\quad t_k\in\K_k(H_m^TH_m,H_m^T\|b\|e_1)\,.
\end{equation}
Analogously, considering the second relation in (\ref{condGMRES}) and exploiting (\ref{ArnoldiDec}) and (\ref{projMatr}), one gets
\[
b-C_mC_m^T\ymk\,\perp\,(C_mC_m^T)\K_k(C_mC_m^T,b)=W_{m+1}H_m\K_k(H_m^TH_m,H_m^T\|b\|e_1)\,,
\]
so that
\[
\|b\|e_1-H_mH_m^TW_{m+1}^T\ymk\,\perp\, \K_k(H_mH_m^T,H_mH_m^T\|b\|e_1)\,.
\]
Recalling that $W_m^T\xmk = H_m^TW_{m+1}^T\ymk$ (directly form (\ref{newProjPb})) and the definition of $t_k$ in (\ref{hybr1}), one gets
\[%\begin{equation}%\label{hybr2}
\|b\|e_1-H_mt_k\,\perp\, \K_k(H_mH_m^T,H_mH_m^T\|b\|e_1)\,.
\]%\end{equation}
Therefore, the vector $t_k$ is obtained by applying $k$ steps of the CGLS method to the projected LS problem (\ref{GMRES}) associated to the GMRES method (see the characterization (\ref{CGLS})). In other words, after performing $m$ steps of the Arnoldi algorithm to build $W_m$ (exactly as GMRES does), the CGLS method is employed to solve the projected LS problem in (\ref{GMRES}), with $m$ fixed. Therefore, in a sequential way, one applies another iterative regularization method within a fixed iteration of an iterative regularization method.
}
\end{remark}
\begin{remark}\emph{
As briefly mentioned in Section \ref{sect:trfreeArn} and in Remark \ref{rem:AdvDraw}, regularization methods based on the Arnoldi algorithm
%, which compute approximate solutions belonging to $\K_m(A,b)$, may sometimes be ineffective because the SVD components of $A$ are mixed in the approximation subspace.
may sometimes be ineffective because the SVD components of $A$ are mixed in the approximation subspace $\K_m(A,b)$. Here we display a numerical example clearly showing that, while severe SVD mixing affects the basis vectors of the GMRES solution, the SVD components are somewhat unmixed in the TF-CGLS basis vectors, whose behavior is comparable to the CGLS ones. The same holds for the hybrid GMRES basis vectors (where the projected problem (\ref{GMRES}) is regularized through TSVD). Analogously to \cite{JH07}, we consider the test problem \texttt{i\_laplace(100)} from \cite{RegT}, and we add Gaussian white noise $e$ to the data vector $b$, in such a way that the noise level $\weps = \|e\|/\|\bex\|$ is $5\cdot 10^{-4}$.
%$\|e\|=5\cdot 10^{-4}\cdot\|\bex\|$.
We consider, as an example, approximation subspaces of dimension 5, spanned by the orthonormal columns of the matrix $\hW_5\in\R^{100\times 5}$, associated to the GMRES, TF-CGLS, hybrid GMRES, and CGLS methods. More specifically:
\begin{itemize}
\item for GMRES: we first run 5 steps the Arnoldi algorithm to generate \linebreak[4]$\WGM_5\in\R^{100\times 5}$ and $H_5\in\R^{6\times 5}$ as in (\ref{ArnoldiDec}), and we then compute the SVD of $H_5$ (\ref{svdH}), whose right singular vector matrix is denoted by $\VHgm_5\in\R^{5\times 5}$. We take $\hW_5 = \WGM_5\VHgm_5$.
\item for TF-CGLS and for hybrid GMRES: we first run 40 steps of the Arnoldi algorithm to generate $\WGM_{40}\in\R^{100\times 40}$ and $H_{40}\in\R^{41\times 40}$ as in (\ref{ArnoldiDec}); we then compute the SVD of $H_{40}$ (\ref{svdH}), and we consider truncation after 5 components. We denote the truncated right singular vector matrix by $\VHhy_5\in\R^{40\times 5}$. We take $\hW_5 = \WGM_{40}\VHhy_5$. This corresponds to taking only the first 5 basis vectors in the TF-CGLS approximate solution.
\item for CGLS: we first run 5 steps of the Arnoldi algorithm applied to $A^TA$, with starting vector $A^Tb$ (though, in practice, this procedure is unadvisable, see \cite[\S 8.3]{Saad}). In this way we generate $\WCG_5\in\R^{100\times 5}$ with orthonormal columns, and $T_5\in\R^{6\times 5}$ tridiagonal. We then compute the SVD of $T_{5}$, whose right singular vector matrix is denoted by $\VT_5\in\R^{5\times 5}$. We take $\hW_5 = \WCG_5\VT_5$.
\end{itemize}
Here the Arnoldi algorithm is implemented through Householder transformations, so to guarantee a high accuracy in the orthonormal columns of $W_{m+1}$ (see \cite[\S 6.3]{Saad}). Figure \ref{fig:TSVD} shows the absolute value of the first, third, and fifth column of $\hW_5$ expressed in terms of the right singular values of $A$,
%i.e., the  absolute values of first, third, and fifth column of $V^T\hW_5$
i.e., $V^T\hW_5$, for the GMRES, TF-CGLS, and CGLS methods.
\begin{figure}[tbp]
\centering
\begin{tabular}{cc}
\hspace{-0.7cm}\textbf{{\small {$V^T\hW_5$, 1st column}}} &
\hspace{-0.7cm}\textbf{{\small {$V^T\hW_5$, 3rd column}}} \\
\hspace{-0.7cm}\includegraphics[width=5.8cm]{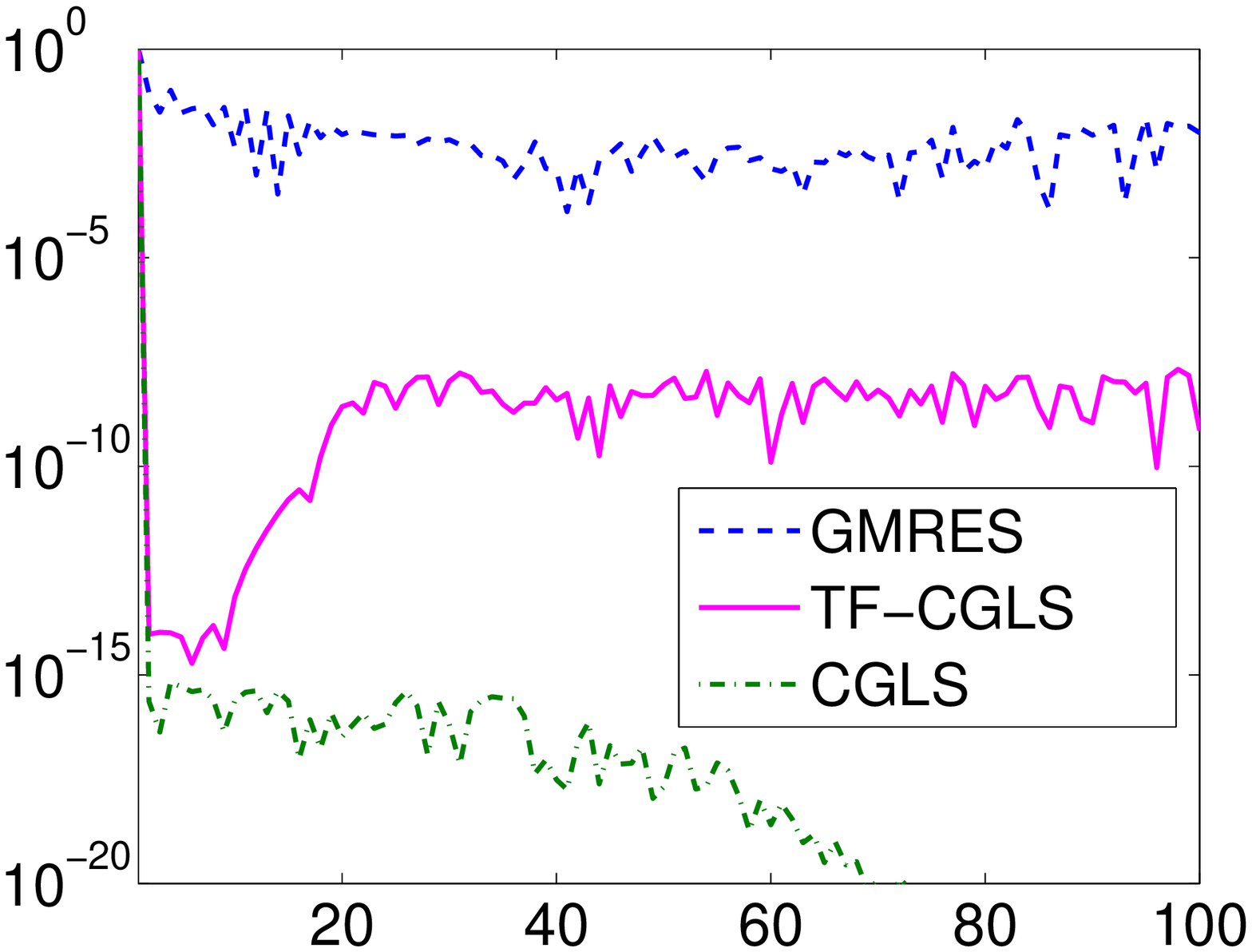} &
\hspace{-0.7cm}\includegraphics[width=5.8cm]{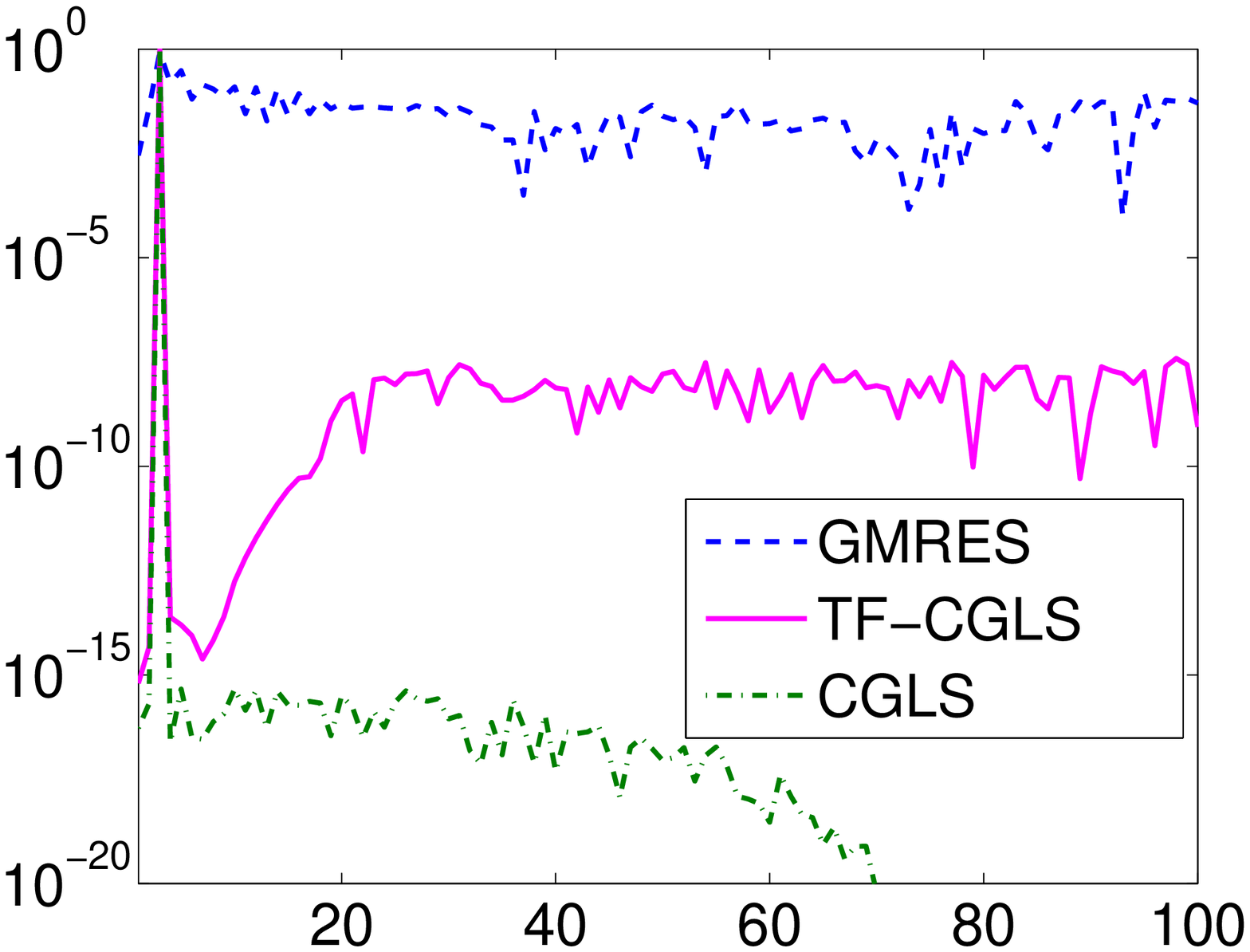}\\
\hspace{-0.7cm}\textbf{{\small {$V^T\hW_5$, 5th column}}} &
\hspace{-0.7cm}\textbf{{\small {filter factors}}} \\
\hspace{-0.7cm}\includegraphics[width=5.8cm]{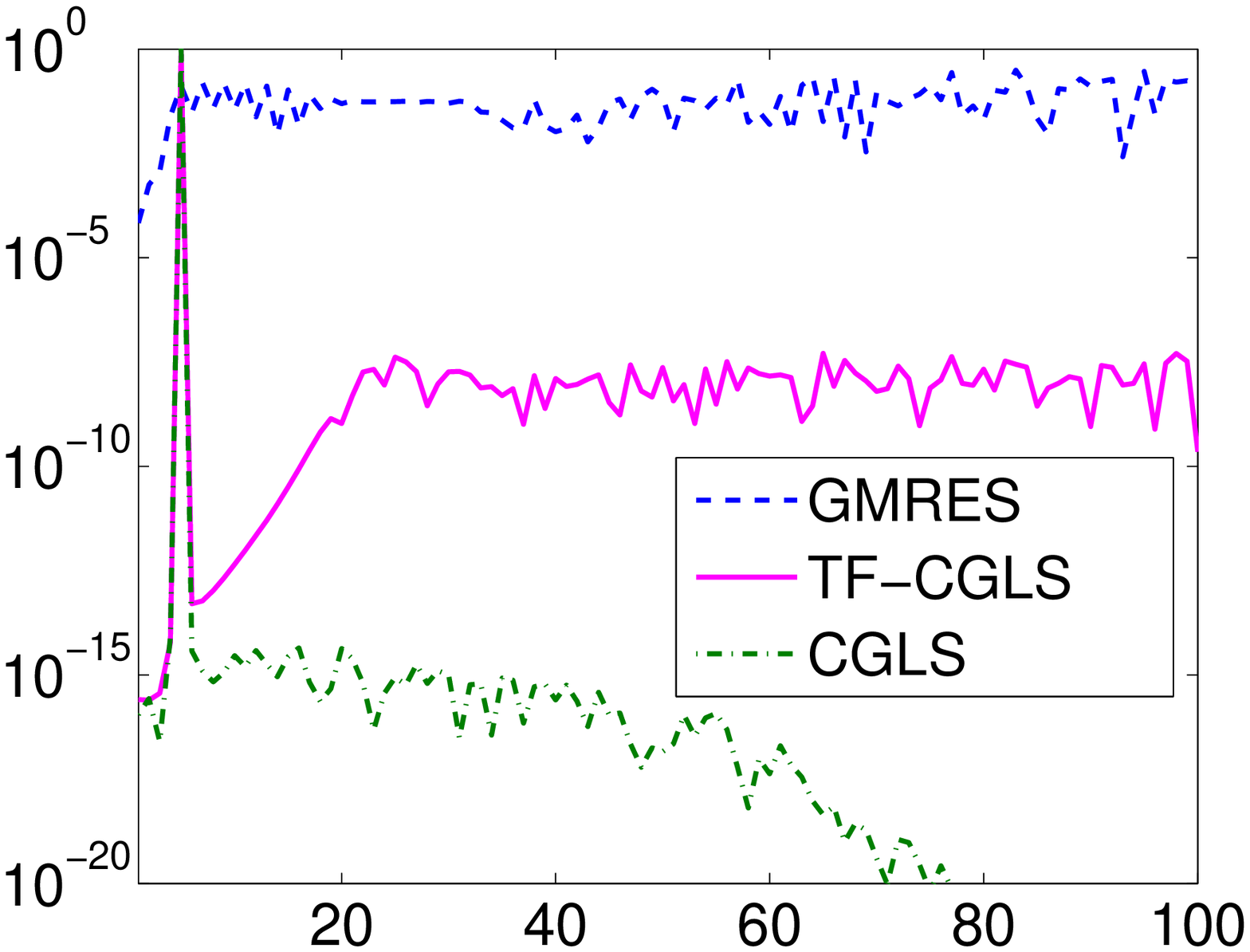} &
\hspace{-0.7cm}\includegraphics[width=5.8cm]{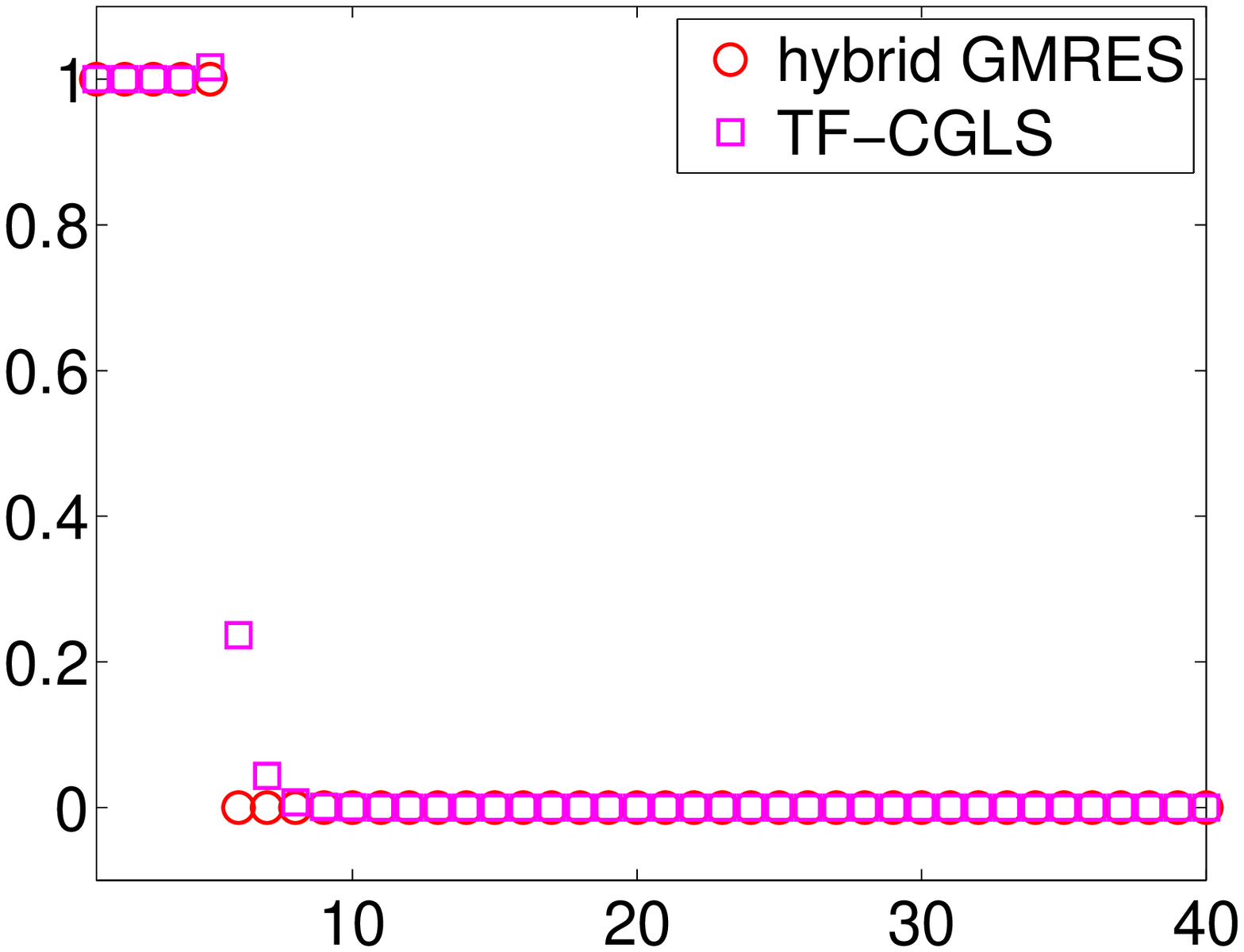}
\end{tabular}
\caption{Components of the first, third, and fifth columns of $\hW_5$ with respect to the right singular vector basis for the GMRES, TF-CGLS, and CGLS methods applied to the \texttt{i\_laplace(100)} test problem. Lower rightmost frame: filter factors for the TF-CGLS (5 CGLS iterations) and hybrid GMRES (5 components) methods.}
\label{fig:TSVD}
\end{figure}
It can be easily seen that, while the GMRES basis vectors have significant components along all the right singular vectors, the same is not true for TF-CGLS. Though the components of the TF-CGLS basis vectors are on average larger than the CGLS ones, the components corresponding to the first singular values of $A$ are clearly dominant (and, in an even more desirable fashion: the $i$th SVD component seems to dominate the $i$th basis vector).
%Furthermore, the first, third, and fifth basis vector dominate
The reason behind this phenomenon lies in the fact that the starting point of TF-CGLS (and hybrid GMRES) is the Krylov subspace $\K_{40}(A,b)$, which is much wider than the Krylov subspace $\K_5(A,b)$ used in standard GMRES and, therefore, it reasonably contains much more spectral information on $A$. As explained in Remark \ref{rem:hybrid}, once $40$ Arnoldi steps have been performed, both TF-CGLS and hybrid GMRES apply additional regularization (or filtering) on the projected least squares problem (\ref{GMRES}), so that
\[
x_{40,5}=\WGM_{40}\VHhy_{40}\Phi^{H_{40}}(\Sigma^{H_{40}}_{40})^{-1}(U^{H_{40}}_{40})^T(\|b\|e_1)\,,
\]
where $U^{H_{40}}_{40}$, $\Sigma^{H_{40}}_{40}$, and $\VHhy_{40}$ are the matrices appearing in the economy-size SVD of $H_{40}$, and $\Phi^{H_{40}}$ is a diagonal filtering matrix, whose elements are:
\[
%\Phi^{H_{40}}_{i,i}=\begin{cases}
%1 &\quad\text{if $i=1,\dots,5$}\\
%0 &\quad\text{otherwise}
%\end{cases}
%\;\mbox{for hybrid GMRES,}\quad
%\Phi^{H_{40}}_{i,i}=p_5(\sigma_i^{(40)})%\; i=1,\dots,40,
%\;\mbox{for TF-CGLS},
\Phi^{H_{40}}_{i,i}=p_5(\sigma_i^{(40)})%\; i=1,\dots,40,
\mbox{ for TF-CGLS,}\quad
\Phi^{H_{40}}_{i,i}=\begin{cases}
1 \!\!&\quad\text{if $i=1,\dots,5$}\\
0 \!\!&\quad\text{otherwise}
\end{cases}
\mbox{for hybrid GMRES,}\quad
\]
where $p_5$ is the polynomial of degrees at most $4$ associated to 5 CGLS iterations for the projected LS problem in (\ref{GMRES}). These filter factors are displayed in the lower rightmost frame of Figure \ref{fig:TSVD}. Starting from an extended Krylov subspace, and being able to filter out the dominant singular components of the projected quantities in (\ref{GMRES}), both TF-CGLS and hybrid-GMRES build a solution subspace where the original SVD components of $A$ are not as mixed as in the standard GMRES one.
%iterations of the CGLS applied to (\ref{GMRES}).
}
\end{remark}

\subsection{A transpose-free CGNE-like method}

\label{sect:tfCGNE}

Now consider computing an approximation $y_{m,k}$ of $y_{m}$ in (\ref{newProjPb}%
) by applying $k$ iterations of the CG method. As well known, this means
that
\begin{equation}
y_{m,k}\in \mathcal{K}_{k}(C_{m}C_{m}^{T},b)\,,\quad \mbox{and}\quad
b-C_{m}C_{m}^{T}y_{m,k}\,\perp \,\mathcal{K}_{k}(C_{m}C_{m}^{T},b)\,,\quad
k\leq m\,.  \label{condCG}
\end{equation}%
As done in (\ref{xTFCGLS}), we can write $\xmk=P_mA^T\ymk$, so that, by using (\ref{projMatr}), the first condition in (\ref{condCG}) can be rewritten as
%
%By recycling some of the derivations in Section \ref{sect:tfCGLS} (and, in
%particular, by using extensively (\ref{projMatr})),
%
%By definition, and by exploiting the Arnoldi algorithm (\ref{ArnoldiDec}),
%we rewrite
%\begin{equation}
%C_{m}C_{m}^{T}=W_{m+1}H_{m}H_{m}^{T}W_{m+1}^{T}=A\underbrace{%
%W_{m}W_{m}^{T}}_{=P_{m}}A^{T},  \label{projMatr}
%\end{equation}%
%where $P_{m}$ is the orthogonal projection onto $\mathcal{K}_{m}(A,b)$.
%Therefore, the approximate solution $x_{m,k}$ of (\ref{sys}) is such that
%
%we can write the approximate solution $x_{m,k}$ of (\ref{sys}) as
%\[
%\xmk=W_mH_m^TW_{m+1}^T\ymk = W_mW_m^TA^T\ymk=P_mA^T\ymk\,,
%\]
%so that
\[
x_{m,k} \in P_mA^T\mathcal{K}_{k}(AP_{m}A^{T},b)\,. % = \K_k(P_mA^TA, P_mA^Tb)\,.
\]
%\begin{eqnarray*}
%x_{m,k} &\in &W_{m}H_{m}^{T}W_{m+1}^{T}\mathcal{K}_{k}(AP_{m}A^{T},b)
%\\
%&=&W_{m}W_{m}^{T}A^{T}\mathcal{K}_{k}(AP_{m}A^{T},b) \\
%&=&P_{m}A^{T}\mathcal{K}_{k}(AP_{m}A^{T},b)\,,
%\end{eqnarray*}%
%so that $x_{m,k}=P_{m}A^{T}y_{m,k}$.
Moreover, the second condition in (\ref{condCG}) leads to
\begin{eqnarray*}
Ax - AP_{m}A^{T}y_{m,k}&\perp &\mathcal{K}_{k}(AP_{m}A^{T},b) \\
x-P_{m}A^{T}y_{m,k} &\perp &A^{T}\mathcal{K}_{k}(AP_{m}A^{T},b) \\
x-x_{m,k} &\perp &A^{T}\mathcal{K}_{k}(AP_{m}A^{T},b)\,.
\end{eqnarray*}%
We can summarize the above arguments in the following
\begin{proposition}
For any given $m\geq 1$ the sequence $\left\{ x_{m,k}\right\} _{k\leq m}$
obtained by applying $k$ steps of the CG method to problem (\ref{newProjPb})
is the result of a Krylov method defined by%
\begin{equation}
x_{m,k}\in P_{m}A^{T}\mathcal{K}_{k}(AP_{m}A^{T},b)\quad \mbox{and}\quad
x-x_{m,k}\perp A^{T}\mathcal{K}_{k}(AP_{m}A^{T},b)\,.  \label{tfCGNE}
\end{equation}
\end{proposition}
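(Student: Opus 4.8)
The plan is to mirror the derivation carried out just above for the TF-CGLS case (which led to Proposition \ref{prop:tfCGLS}), but starting from the \emph{Galerkin} characterization of CG rather than the minimal-residual characterization of MINRES. First I would invoke the standard variational characterization of the CG method: applying $k$ steps of CG to the symmetric positive semidefinite system $C_{m}C_{m}^{T}y=b$ of (\ref{newProjPb}) produces exactly the projection iterate defined by the two conditions in (\ref{condCG}), namely $y_{m,k}\in\K_{k}(C_{m}C_{m}^{T},b)$ together with the orthogonality $b-C_{m}C_{m}^{T}y_{m,k}\perp\K_{k}(C_{m}C_{m}^{T},b)$. The entire proof then amounts to translating these two conditions into statements about $x_{m,k}$, using two identities already available in the paper: the factorization $C_{m}C_{m}^{T}=AP_{m}A^{T}$ from (\ref{projMatr}), and the relation $x_{m,k}=P_{m}A^{T}y_{m,k}$ obtained exactly as in (\ref{xTFCGLS}).

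For the first (subspace) condition, I would substitute $C_{m}C_{m}^{T}=AP_{m}A^{T}$ into $y_{m,k}\in\K_{k}(C_{m}C_{m}^{T},b)$ and left-multiply by $P_{m}A^{T}$; since $x_{m,k}=P_{m}A^{T}y_{m,k}$, this immediately gives $x_{m,k}\in P_{m}A^{T}\K_{k}(AP_{m}A^{T},b)$, which is the first condition in (\ref{tfCGNE}). For the second (orthogonality) condition, the idea is to write $b=Ax$, with $x$ a solution of (\ref{sys}), and substitute both $b=Ax$ and $C_{m}C_{m}^{T}=AP_{m}A^{T}$ into the Galerkin condition, obtaining $A(x-P_{m}A^{T}y_{m,k})\perp\K_{k}(AP_{m}A^{T},b)$. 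The key move is to push the factor $A$ across the inner product: perpendicularity to a test vector $v$ reads $v^{T}A(x-P_{m}A^{T}y_{m,k})=(A^{T}v)^{T}(x-P_{m}A^{T}y_{m,k})=0$, so the condition is equivalent to $x-P_{m}A^{T}y_{m,k}\perp A^{T}\K_{k}(AP_{m}A^{T},b)$. Recognizing $x_{m,k}=P_{m}A^{T}y_{m,k}$ once more yields $x-x_{m,k}\perp A^{T}\K_{k}(AP_{m}A^{T},b)$, the second condition in (\ref{tfCGNE}).

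The step I expect to require the most care is the substitution $b=Ax$ in the orthogonality condition. In the ill-posed setting $Ax=b$ need not be consistent, so this identity should be read \emph{formally}, as a device to characterize the method rather than as an assertion about an exact solution; what genuinely needs justification is that $C_{m}C_{m}^{T}y=b$ is solvable so that CG is well defined and its Galerkin characterization applies, and this is precisely what Proposition \ref{prop:sols} guarantees. Apart from this subtlety, the argument is nothing more than a transposition inside the inner product and is entirely parallel to the TF-CGLS derivation, so no further technical obstacles are anticipated.
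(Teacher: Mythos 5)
Your proposal is correct and follows essentially the same route as the paper: the Galerkin characterization (\ref{condCG}) of CG, the factorization $C_mC_m^T=AP_mA^T$ from (\ref{projMatr}), the identity $x_{m,k}=P_mA^T y_{m,k}$ as in (\ref{xTFCGLS}), the formal substitution $b=Ax$, and the transposition of $A$ across the inner product. Your added remark on reading $b=Ax$ formally (and on when the Galerkin characterization is guaranteed to apply, via Proposition \ref{prop:sols}) is a harmless refinement that the paper leaves implicit.
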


The above proposition allows us to see the strong relation of this approach with
the well-known CGNE method, whose approximate solutions satisfy
\[%\begin{equation}
x_{k}^{\text{CGNE}}\in A^{T}\mathcal{K}_{k}(AA^{T},b)\quad \mbox{and}\quad
x-x_{k}^{\text{CGNE}}\perp A^{T}\mathcal{K}_{k}(AA^{T},b)\,,  %\label{CGNE}
\]%\end{equation}%
and are computed through an implicit construction of the Krylov subspaces
$A^T\mathcal{K}_{k}(AA^{T},b)=\mathcal{K}_{k}(A^TA,A^Tb)$.
%Indeed, if $P_m=I$, then conditions (\ref{tfCGNE}) and (\ref{CGNE}) are equivalent.
Using similar arguments to the ones in
%Similarly to
Section \ref{sect:tfCGLS}, the new method (\ref{tfCGNE}) can be regarded as a transpose-free variant of a CGNE-like method, and from now on it will be simply referred to as TF-CGNE; correspondingly, the vector $x_{m,k}$ in (\ref{tfCGNE}) will be denoted as $x_{m,k}^{\text{NE}}$. Statements analogous to the ones explained in Remark \ref{rem:AdvDraw} also hold for the TF-CGNE case.
%
%The TF-CGNE method has two clear
%advantages over CGNE: it does not require knowledge of $A^{T}$, and it
%basically needs only one matrix-vector multiplication with $A$ per step (to initially generate $W_{m+1}$ and $H_m$), as
%opposed to one matrix-vector product with $A$ and one matrix-vector product
%with $A^{T}$ per step needed by CGNE. Indeed, the additional $k$ CG iterations required by TF-CGNE to compute the solution of (\ref{newProjPbRW}) can be performed on the projected problem of order $m+1$. The basic drawback of CGNE is that each
%approximate solution $\{x_{m,k}\}_{k\leq m}$ belongs to $\mathcal{K}%
%_{m}(A,b)$ (directly by (\ref{tfCGNE}) and by the definition of $P_{m}$ in (\ref{projMatr})).
%%Indeed, the vector resulting from a matrix-vector product with $%
%%P_{m}$ belongs to $\mathcal{K}_{m}(A,b)$.
%However, if $\mathcal{K}_{m}(A,b)$ well captures the features of the
%solution that we wish to recover, then multiplication by $P_{m}$ does not spoil the approximation
%subspace. This is equivalent to requiring that $\widehat{A}_m$ in (\ref{Akry}) inherits the spectral properties of $A$ (see relations (\ref{s1}) and (\ref{s2})), provided that a meaningful regularized solution can be recovered by TSVD (i.e., the columns of $V_m^A$ are a good basis for a regularized solution).
%%(meaning that the desired features can be
%%expressed as linear combinations of vectors belonging to $\mathcal{K}%
%%_{m}(A,b)$) (this is ultimately equivalent to requiring that $A_{m}^{\prime }$
%%is a good approximation of $A^{T}$).

We conclude this section by mentioning that, although CGNE is an iterative regularization method, in practice it may perform very badly.
Indeed, if system (\ref{sys}) is inconsistent, CGNE does not even converge to $A^\dagger b$ (see \cite[Chapter 4]{Han95}). This means that, if the unperturbed system $A\xex=\bex$ is consistent, only small perturbations $e$ of $\bex$ are allowed, in such a way that $b$ still belongs to the range of $A$.
The same behavior is experimentally observed when performing the TF-CGNE method (see the numerical experiments in Section \ref{sect:numexp}). Therefore, even if TF-CGNE potentially represents an alternative to TF-CGLS, the latter is to be preferred when dealing with noisy ill-posed problems.

%safer to use in practice.
%as a regularization method (this is confirmed by some of the numerical
%experiments in Section \ref{sect:numexp}). Indeed, the optimality property
%of CGNE (stemming from the second (Galerkin) condition in (\ref{CGNE})),
%states that the error $x_{k}^{\text{CGNE}}-x$
%%in the $A^TA$ norm (THIS IS WRONG!!!)
%is minimized among the vectors belonging to $A^{T}\mathcal{K}_{m}(AA^{T},b)=%
%\mathcal{K}_{m}(A^{T}A,A^{T}b)$. However,
%%in stating this property, one exploits the equality $Ax=b$.
%when considering ill-posed systems with noisy data (as in our case), $x$ can
%be far away from the desired solution $x^{\text{ex}}$. Therefore, minimizing
%$x_{k}^{\text{CGNE}}-x$ can be totally meaningless. The situation is
%different for CGLS, as we will see in the next section.

\subsection{Setting the regularization parameters}\label{sect:RegP}

The transpose-free CG-like methods described in Sections \ref{sect:tfCGLS} and \ref{sect:tfCGNE} (here briefly denoted by TF-CG) are,
indeed, multi-parameter iterative methods, whose success depends on an
accurate tuning of both the scalars $m$ and $k$. It should be also remarked that the parameters $m$ and $k$ act sequentially (this is the main difference between the hybrid and the TF-CGLS methods): once $m$ is
fixed, an appropriate value for $k$ should be set. A natural way to fix $m$
(i.e., the dimension of the Krylov subspace for the
approximate solution $x_{m,k}$) is to monitor the expansion of the Krylov subspace $%
\mathcal{K}_{m}(A,b)$, which can be measured by the sub-diagonal elements of
the Hessenberg matrix $H_{m}$ in (\ref{ArnoldiDec}) (see \cite{Survey,NR14}).
Therefore, we stop the preliminary iterations when
\begin{equation}
h_{m+1,m}<\tau ,  \label{stopO}
\end{equation}%
where $\tau >0$ is a specified threshold.
In terms of regularization, this criterion is partially justified by the bound
\begin{equation*}
\prod\nolimits_{j=1}^{m}h_{j+1,j}\leq \prod\nolimits_{j=1}^{m}\sigma _{j}\,,
\end{equation*}
(see \cite{Mo}), which basically states that, on geometric average, the sequence $\{h_{j+1,j}\}_{j\geq 1}$ decreases quicker than the singular values.
%In terms of regularization this
%criterium find partial justification from the bound (see \cite{Mo})%
%\begin{equation*}
%\prod\nolimits_{j=1}^{m}h_{j+1,j}\leq \prod\nolimits_{j=1}^{m}\sigma _{j}.
%\end{equation*}

In principle, another natural approach to set $m$ can be devised by
monitoring the values of the quantity
\begin{equation}
\zeta _{m}=\Vert A^{T}A-P_{m}A^{T}A\Vert \,.  \label{stopOvar1}
\end{equation}%
The smaller $\zeta _{m}$, the nearer $A^{T}A$ to $P_{m}A^{T}A$, i.e., the
more accurate the transpose-free approximation of $A^{T}A$. Since the
approximate solutions $x_{m}$ computed by the TF-CG methods belong to the subspace \linebreak[4]
$\mathcal{K}_{m}(P_{m}A^{T}A,P_{m}A^{T}b)$ (see the first relation in (\ref{tfCGLS}) and (\ref{tfCGNE})),
a small $\zeta _{m}$ also implies that the generated approximation
subspaces are close to $\mathcal{K}_{m}(A^{T}A,A^{T}b)$. However, one of the
main motivations behind TF-CG methods being the lack of knowledge of $A^{T}$ for some
large-scale problems, the quantities $\zeta _{m}$ in (\ref{stopOvar1}) cannot be computed in
practice. Therefore, after some simple derivations one can provide the
following upper bound:
\[%\begin{equation}
\zeta _{m}=\Vert (I-P_{m})A^{T}A\Vert \leq \Vert A\Vert \cdot \Vert
(I-P_{m})A^T\Vert =\sigma _{1}\Vert A(I-P_{m})\Vert \,.  %\label{stopOvar1.5}
\]%\end{equation}%
Though the above bound does not explicitly involve $A^{T}$, $A^{T}$ is
required by algorithms for computing $\sigma _{1}$. Moreover,
when dealing with large-scale problems, both $\sigma _{1}$ and $\Vert
A(I-P_{m})\Vert $ can be expensive to compute. Therefore, one should look
for yet other alternative bounds. One can take ${\sigma}_{1}^{(m)}$, i.e., the largest singular value of $H_m$,
as an approximation of $\sigma_{1}$: indeed, thanks
to the interlacing property of the singular values (see, for instance, \cite%
{ATfirst,iDPC}), one can prove that
\begin{equation*}
\sigma _{1}\geq {\sigma}_{1}^{(\ell +1)}\geq {\sigma}_{1}^{(\ell
)}.
%\quad \mbox{and}\quad \quad \sigma _{N}\leq {\sigma}_{\ell +1}^{(\ell
%+1)}\leq {\sigma}_{\ell }^{(\ell )}\,,\quad
%\mbox{with $1\leq\ell\leq
%N-1$}.
\end{equation*}%
Many numerical experiments available in the literature show that ${\sigma%
}_{1}^{(m)}$ quickly approaches $\sigma _{1}$ (see also \cite{N}),
%(i.e., $\bar{\sigma}%
%_{1}^{(m)}=\sigma _{1}-\varepsilon _{m}$, with $\varepsilon _{m}<\sigma _{1}$
%negligible for $m\geq \widehat{m}$, with $\widehat{m}$ small),
so that
\begin{equation}
\zeta _{m}\leq \sigma _{1}\Vert A(I-P_{m})\Vert =({\sigma}%
_{1}^{(m)}+\varepsilon _{m})\Vert A-W_{m+1}H_{m}W_{m}^{T}\Vert \,,
\label{stopOvar2}
\end{equation}
where $\eps_m\rightarrow 0$ as $m$ increases. Replacing $\sigma _{1}$ with ${\sigma}_{1}^{(m)}$ may not be meaningful
when $m$ is very small, but this is not the case when performing the first
cycle of iterations of the Arnoldi algorithm for the TF-CG methods. Note
that, to rewrite the second term of the last equality in the above equation,
we have also exploited (\ref{ArnoldiDec}) . While some numerical experiments
available in the literature (see \cite{Survey}) suggest that the quantity $%
\Vert A-W_{m+1}H_{m}W_{m}^{T}\Vert $ decays similarly to the singular
values of $A$, no theoretical results have been established, yet.
%, and some approximation should still be considered in the case of large-scale problems.
Similarly to what happens in the TSVD case, one can consider
%the following estimate can be given
\begin{equation*}
\Vert A-W_{m+1}H_{m}W_{m}^{T}\Vert \simeq {\sigma}_{m+1}^{(m+1)}\,.
\end{equation*}%
However, the above estimate can be quite optimistic, for various reasons. First of all, it
would be quite sharp if the matrices $\widehat{U}_{m}$ and $\widehat{V}_{m}$ in (\ref{svdhfull}) coincide with the TSVD matrices $U_{m}^A$ and $V_{m}^A$ in (\ref{TSVD}), respectively: if this
is not the case, $\Vert A-W_{m+1}H_{m}W_{m}^{T}\Vert \gg {\sigma}%
_{m+1}^{(m+1)}$. Secondly, contrarily to what happens to the extremal
singular values, and because of numerical inaccuracies, one cannot guarantee
that ${\sigma}_{m+1}^{(m+1)}\geq \sigma _{m+1}$. Nevertheless, experimentally
it appears reliable to stop the first set of Arnoldi iterations when
${\sigma}_{1}^{(m)}{\sigma}_{m+1}^{(m+1)}$ is sufficiently small, i.e.,
one should stop as soon as
\begin{equation}
{\sigma}_{1}^{(m)}{\sigma}_{m+1}^{(m+1)}<\tau^\prime\,,  \label{stopOvar3}
\end{equation}%
where $\tau ^{\prime }>0$ is a specified threshold.

%one can stop the first set of Arnoldi iterations when the very optimistic
%estimate ${\sigma}_{1}^{(m)}{\sigma}_{m+1}^{(m+1)}$ is sufficiently small, i.e., one should stop as soon as
%\begin{equation}
%{\sigma}_{1}^{(m)}{\sigma}_{m+1}^{(m+1)}<\tau^\prime\,,  \label{stopOvar3}
%\end{equation}%
%where $\tau ^{\prime }>0$ is a specified threshold.

%taking  as a very  of the second term on the right-hand side of (\ref{stopOvar2}), one
%can devise an efficient stopping criterion based on the stabilization of
%these quantities, i.e., one should stop as soon as
%\begin{equation}
%\frac{\;\left\vert \bar{\sigma}_{1}^{(m)}\bar{\sigma}_{m+1}^{(m+1)}-\bar{%
%\sigma}_{1}^{(m-1)}\bar{\sigma}_{m}^{(m)}\right\vert \;}{\bar{\sigma}%
%_{1}^{(m-1)}\bar{\sigma}_{m}^{(m)}}<\tau ^{\prime }\,,  \label{stopOvar3}
%\end{equation}%
%where $\tau ^{\prime }>0$ is a specified threshold.

To choose the number $k$ of additional iterations for the TF-CG methods,
some standard parameter choice strategies can be used. For instance, if one
has a good estimate of the noise level $\widehat{\varepsilon }$,
%$=\|e\|/\|b^{\text{ex}}\|$,
the discrepancy principle can be applied and the
iterations can be stopped as soon as
\begin{equation}
\Vert b-Ax_{m,k}\Vert = \Vert b-C_{m}C_{m}^{T}y_{m,k}\Vert =\left\Vert \Vert b\Vert e_{1}-H%
_{m}H_{m}^{T}z\right\Vert <\eta \widehat{%
\varepsilon }\Vert b\Vert\,,  \label{stopI}
\end{equation}%
where $\eta >1$ is a safety factor. If $\widehat{\varepsilon}$ is not known,
one can resort to other classical parameter choice methods such as GCV and
the L-curve (see \cite[Chapter 7]{PCH98}). The TF-CG methods are summarized in
Algorithm \ref{alg:tfCG}.

\begin{algorithm}
\caption{TF-CG methods}\label{alg:tfCG}
\begin{algorithmic}
\State \textbf{input} $A$, $b$, $\tau$ or $\tau^\prime$, \texttt{solver}, $\eta$, $\weps$
\For {$m=1,2,\dots,$until the stopping criterion (\ref{stopO}) or (\ref{stopOvar3}) is satisfied}{}
\State update the Arnoldi decomposition: $AW_m=W_{m+1}H_m$ %factorization (\ref{ArnoldiDec})
\EndFor
\For {$k=1,2,\dots,$until (\ref{stopI}) is satisfied}
\If {\texttt{solver} is TF-CGLS}
    \State apply MINRES to the system $H_mH_m^Tt=\|b\|e_1$, to get $t_k$
\ElsIf {\texttt{solver} is TF-CGNE}
    \State apply CG to the system $H_mH_m^Tt=\|b\|e_1$, to get $t_k$
\EndIf
\EndFor
\State take $\xmk=W_mH_m^Tt_k$
\end{algorithmic}
\end{algorithm}

\section{Numerical experiments}\label{sect:numexp}

This section shows the performance of the methods summarized in
Algorithm \ref{alg:tfCG} on a variety of test problems: comparisons with
GMRES and, whenever possible, CGLS and CGNE, will be displayed, and the behavior of the class of the
TF-CG-like methods with respect to different choices of the number of iterations $%
m$ and $k$ will be assessed. A first set of experiments considers moderate-scale problems form \cite{RegT}, while a second set of experiments considers realistic large-scale problems arising in the framework of 2D image deblurring. All the tests are performed running MATLAB
R2013a on a single processor 2.2 GHz Intel Core i7.
%
%A first set of experiments considers test problems from \cite{RegT}: these
%include the inverse Laplace transform \texttt{i\_laplace}, the artificial
%Fredholm integral equation of the first kind \texttt{baart}, and the inverse
%heat equation \texttt{heat}. The coefficient matrices associated to these
%problems are unsymmetric, and their size is set to $N=200$. Note that these
%are not properly large scale problems, and the use of TF-CG methods may
%appear meaningless in this setting, as $A^T$ is easily available, and the
%SVD of $A$ can be effortlessly computed. However, this first set of
%experiments is mainly included to compare the behavior of the TF-CG methods
%and TSVD, CGLS, and CGNE, and to test some theoretical estimates (such as (%
%\ref{stopOvar1}) -- (\ref{stopOvar2})). The noise level is $\widehat{%
%\varepsilon} = 10^{-2}$, and the maximum allowed number of Arnoldi
%iterations (in the first cycle of iterations in Algorithm \ref{alg:tfCG}) is
%$m_{\max}=40$; the values $\tau = 10^{-10}$ and $\tau^{\prime}=10^{-1}$ are
%chosen for the stopping criteria in (\ref{stopO}) and (\ref{stopOvar3}),
%respectively.
%
%A second set of experiments considers truly large-scale problems arising in
%multi-dimensional applications such as 2D image restoration \textbf{(and
%most probably 2D tomography)}.
%
\paragraph{First set of experiments.}
%We consider some test problems with unsymmetric coefficient matrix from \cite{RegT}.
We consider problems with a nonsymmetric coefficient matrix and a righ-hand-side vector that is affected by Gaussian white noise, whose level is $\widehat{%
\varepsilon} = 10^{-2}$. For all the tests, the maximum allowed number of Arnoldi
iterations (in the first cycle of iterations in Algorithm \ref{alg:tfCG}) is
$m_{\max}=40$, and $\eta=1.01$. Since $A^T$, as well as the SVD of $A$, are easily available for these problems, the use of TF-CG-like methods may appear meaningless in this setting: these experiments are nonetheless included to compare the behavior of the TF-CG-like methods and the CGLS, and CGNE methods, and to test some theoretical estimates (such as (\ref{stopOvar1}) -- (\ref{stopOvar3})).
\begin{enumerate}
\item \texttt{\textbf{i\_laplace}}. Let us consider the inverse Laplace transform of the function \linebreak[4]$%
f(t) = \exp(-t/2)$, i.e., we wish to solve the integral equation
\begin{equation}  \label{ilaplace1}
\int_{0}^{\infty}\exp(-st)f(t)dt = g(s)+e\,,
\end{equation}
where $g(s)=1/(s+1/2)$ and $e$ is some unknown (continuous) noise. The discretization of (\ref%
{ilaplace1}) is available within \cite{RegT}: we choose $N=100$, so that
\begin{equation}  \label{dist_ilaplace}
\|A-A^T\|/\|A\|=0.7456\,.
\end{equation}
The values $\tau = 10^{-10}$ and $\tau^{\prime}=10^{-15}$ are
chosen for the stopping criteria in (\ref{stopO}) and (\ref{stopOvar3}),
respectively.
\begin{figure}[tbp]
\centering
\begin{tabular}{c}
\textbf{{\small {Relative Error History}}} \\
\includegraphics[width=11cm]{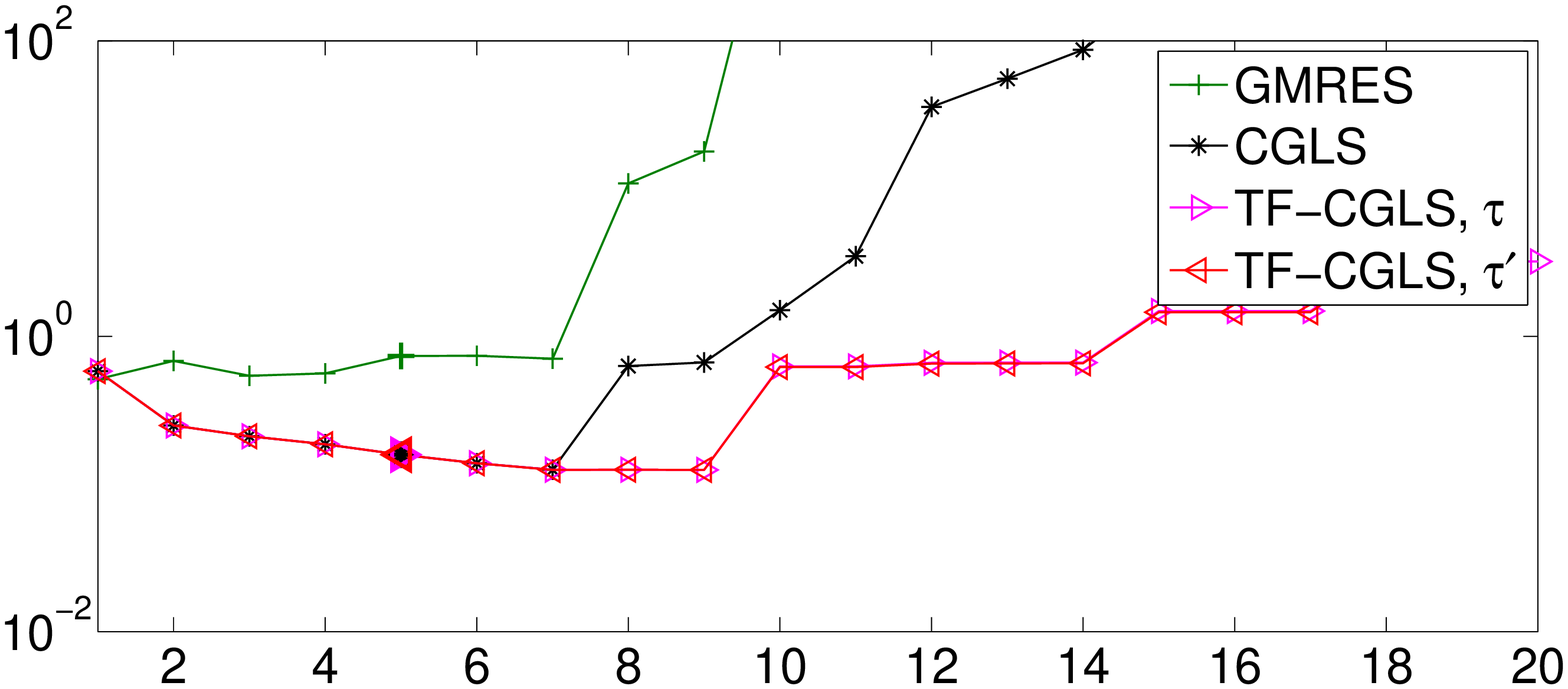} \\
\textbf{{\small {Relative Residual History}}} \\
\includegraphics[width=11cm]{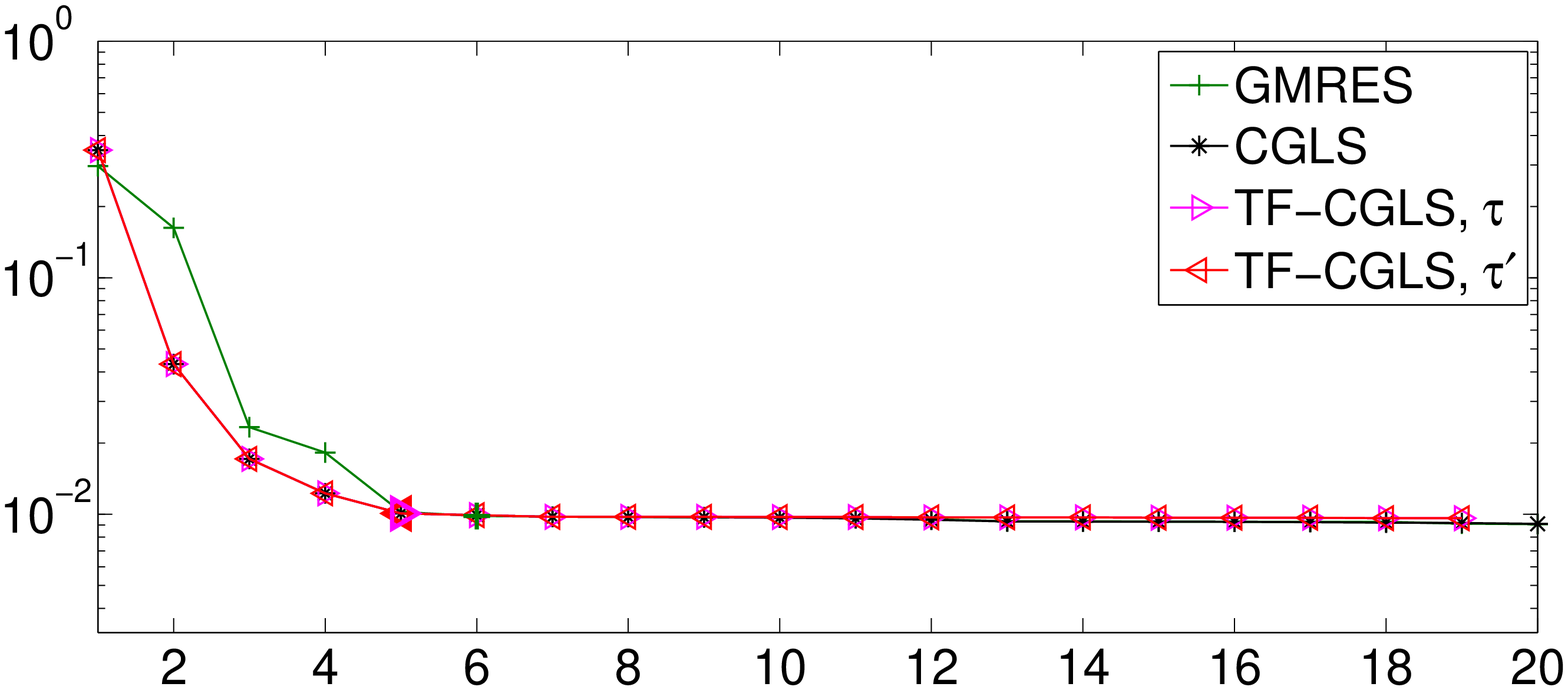}%
\end{tabular}%
\caption{Test problem \texttt{i\_laplace} (\protect\ref{ilaplace1}), with $f(t)=\exp(-t/2)$, $N=100$, and $\weps = 10^{-2}$. Upper
frame: relative errors versus number of iterations. Lower frame: relative
residuals versus number of iterations.}
\label{fig:ilaplace1_1}
\end{figure}
Figure \ref{fig:ilaplace1_1} compares the GMRES, CGLS, and TF-CGLS methods
(for different choices of the stopping criterion for the first set of iterations).
Both stopping criteria (\ref{stopO}) and (\ref{stopOvar3}) are satisfied after 20 Arnoldi iterations. Enlarged markers are used to highlight the
iterations satisfying the discrepancy principle (\ref{stopI}) for all the methods (so that, in the GMRES and CGLS case, the quantities $\|b-Ax_m\|$ are monitored). In the upper frame of Figure \ref%
{fig:ilaplace1_1}, one can clearly see the TF-CGLS methods to deliver a
huge improvement over the standard GMRES method, and the behavior of the TF-CGLS method is very similar to the CGLS one. TF-CGLS seems also very robust with respect to \tql semi-converegence\tqr. Numerical values of the relative error
\[
\|x_\ast - \xex\|/\|\xex\|\,,%\quad\mbox{where $x_\ast = x_m$ (for GMRES and CGLS) or $x_\ast=\xmk$ (for TF-CGLS)}
\]
where $x_\ast = x_m$ (for GMRES and CGLS) or $x_\ast=\xmk$ (for TF-CGLS with (\ref{stopOvar3}) as first stopping criterion), are reported in Table \ref{tab:expo}: the average over 20 runs of each test problem, with different realizations of the random noise vector in the data, are taken. Table \ref{tab:expo} also reports the average number of iterations performed to satisfy the discrepancy principle (\ref{stopI}), and the average number of Arnoldi steps required to satisfy the stopping criteria (\ref{stopO}) and (\ref{stopOvar3}) during the first cycle of TF-CGLS iterations.
%for each method considered in Figure \ref{fig:ilaplace1_1}while the best relative error $\|x_m-\xex\|/\|\xex\|$ attained by GMRES is $5.1263\cdot 10^{-1}$ (2nd iteration), the best relative error attained by both
%over CGLS is visible and, in particular, semi-convergence is not so severe
%for the TF-CGLS methods.
Regarding the stopping criteria, a word of caution is mandatory: although looking at Figure \ref{fig:ilaplace1_1} and Table \ref{tab:expo} it may seem that all the methods stop after roughly 5 iterations, we should recall that TF-CGLS actually stops after 5 iterations during the second cycle in Algorithm \ref{alg:tfCG}, and that this only happens after 20 Arnoldi iterations have been performed.
%Indeed, the second cycle of iterations is triggered once a stopping criterion for the first cycle of iterations in Algorithm \ref{alg:tfCG} is satisfied: for this particular example, this happens after 20 Arnoldi iterations.
%
%here
%and in the following, when considering a stopping criterion for the TF-CG
%methods, we mean a stopping criterion for the second cycle of iterations in
%Algorithm \ref{alg:tfCG}, indexed by $k$. In this specific experiment, even
%if TF-CGLS requires 6 MINRES iterations, in order to define the matrix $%
%\widehat{A}_m^\prime$ one should recall that 19 Arnoldi iterations had to be
%performed if the stopping criterion (\ref{stopOvar3}) is chosen for the
%Arnoldi algorithm.
Therefore, for this test problem, the computational cost of GMRES, CGLS, and
TF-CGLS is roughly dominated by the cost of 5, 10, and 20 matrix-vector products with a matrix of size $N\times N$, %$A$,
respectively.
We think that the additional (but still small) number of matrix-vector
products required by TF-CGLS is tolerable if we consider the improved quality of the
solution (with respect to GMRES), and the transpose-free feature (with respect to
CGLS). In the lower frame of Figure \ref{fig:ilaplace1_1} one
can notice a slight increase in the TF-CGLS residuals (so
that they are not monotonic).
Moreover, inequality (\ref{optGMres}) also applies to the TF-CGLS case, i.e., \linebreak[4]when $x_m = x_{m,k}\in\mathcal{K}_m(A,b)$ (recall (\ref{xTFCGLS})): more precisely, once $m$ has been set, $\|Ax^{\text{GMR}}_m-b\|$ is smaller than any $%
\|Ax_{m,k}-b\|$, for $k\leq m$
(but this does not imply any other relation between $\|Ax^{\text{GMR}}_{\ell}-b\|$, $%
\ell< m$, and $\|Ax_{m,k}-b\|$).
%see that the TF-CGLS residuals eventually show a slight increase (so
%that they are not monotonic).
%\textbf{Additional remark (that initially
%worried me, looking at the behavior of the residuals)}: as stated in Section %
%\ref{sect:trfreeArn}, inequality (\ref{optGM}) also applies when $\widehat{x}%
%_m = x_{m,k}$, since $x_{m,k}\in\mathcal{K}_m(A,b)$. This more precisely
%means that, fixed $m$, $\|Ax^{\text{GM}}_m-b\|$ is smaller than any $%
%\|Ax_{m,k}-b\|$, for $k\leq m$ (this holds also numerically, but it does not
%imply any other relation between $\|Ax^{\text{GM}}_{\ell}-b\|$, for $%
%\ell\neq m$, and $\|Ax_{m,k}-b\|$). Checking the numerical experiments, one
%clearly sees that, for instance, $\|Ax^{\text{GM}}_2-b\|\gg\|A x_{m,2}-b\|$:
%this is allowed!
\begin{figure}[tbp]
\centering
\begin{tabular}{cc}
\hspace{-0.7cm}\textbf{{\small {(a)}}} & \hspace{-0.7cm}\textbf{{\small {(b)}}} \\
\hspace{-0.7cm}\includegraphics[width=6.2cm]{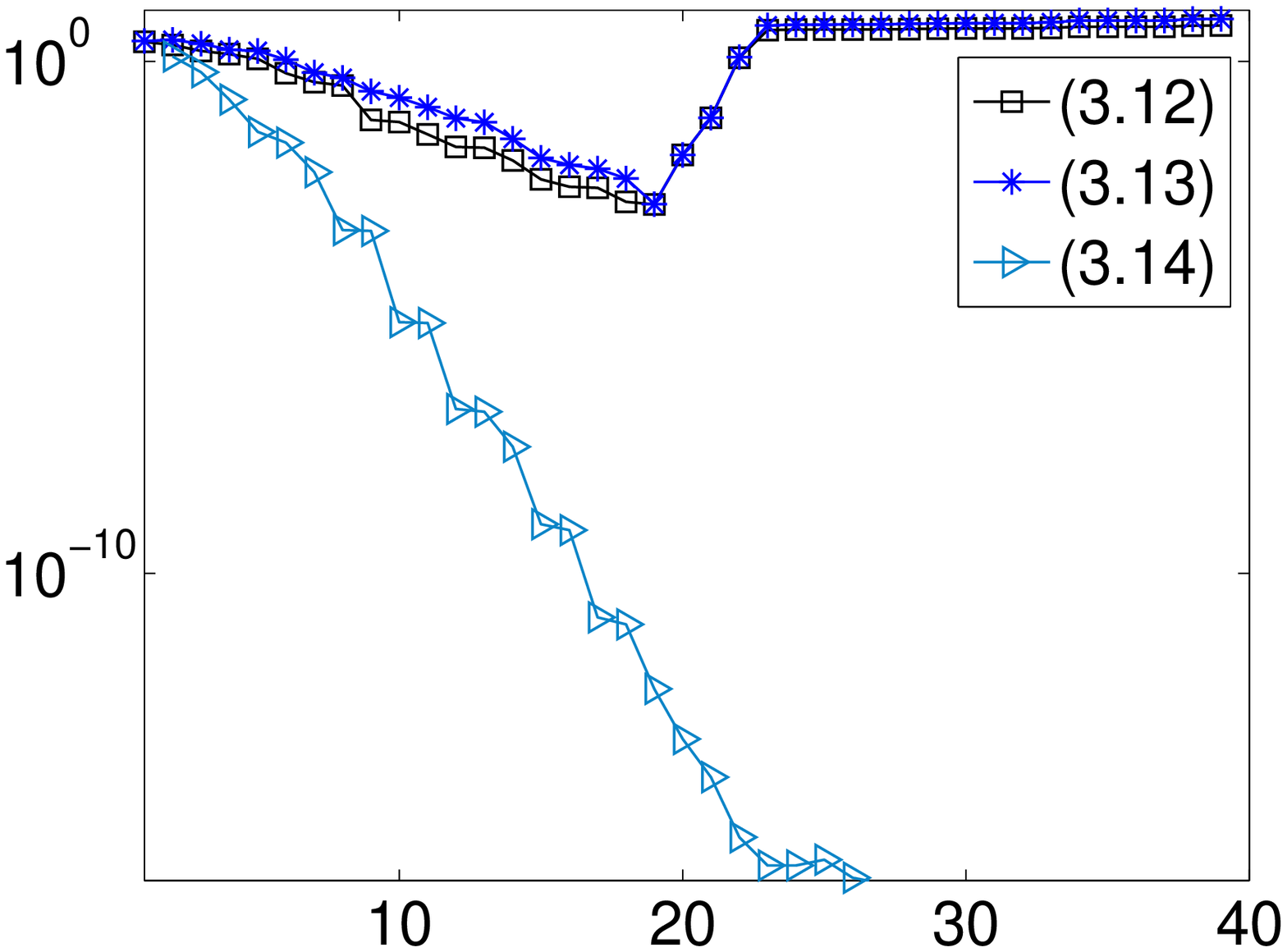} & %
\hspace{-0.7cm}\includegraphics[width=6.2cm]{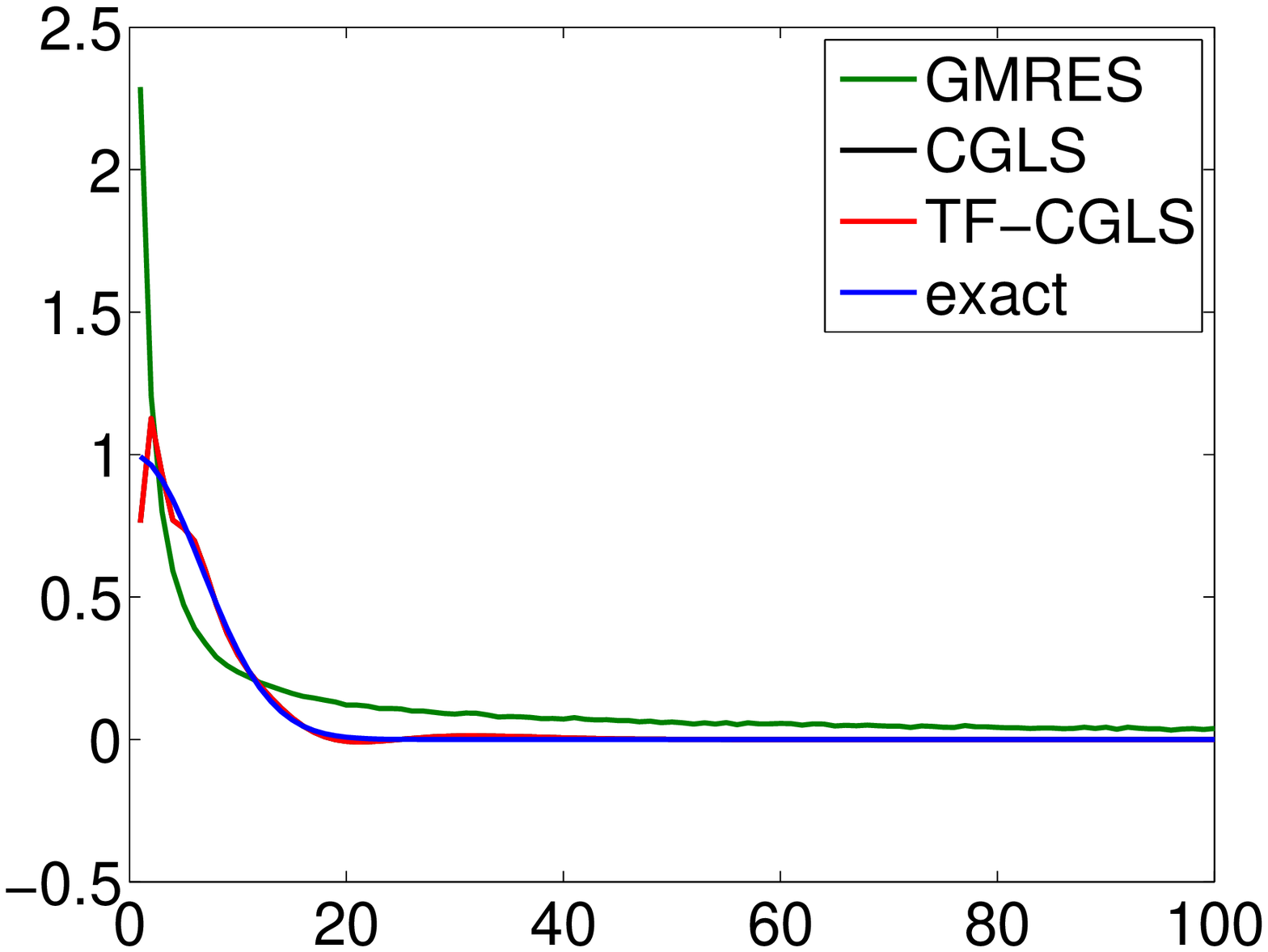}%
\end{tabular}%
\caption{Test problem \texttt{i\_laplace} (\protect\ref{ilaplace1}), with $f(t)=\exp(-t/2)$, $N=100$, and $\weps = 10^{-2}$.
\textbf{(a)} Values of the quantities (\protect\ref{stopOvar1}), (\ref{stopOvar2}), and (\protect\ref%
{stopOvar3}) versus the number of Arnoldi iterations $m$. \textbf{(b)} Best
approximations achieved by the GMRES, CGS, and TF-CGLS methods.}
\label{fig:ilaplace1_3}
\end{figure}

The left frame of Figure \ref{fig:ilaplace1_3} displays the
behavior of the quantities (\ref{stopOvar1}) -- (\ref{stopOvar3}) versus the
number of Arnoldi iterations $m$. One can clearly see that
%(\ref{stopOvar1.5}) and
(\ref{stopOvar2}) is a tight bound for the potentially unknown quantity (\ref%
{stopOvar1}).
%, and that there is virtually no difference between (\ref%
%{stopOvar1.5}) and (\ref{stopOvar2}).
One also realizes that estimate (\ref{stopOvar3}) is indeed very optimistic, as anticipated in Section \ref{sect:RegP}.
%obut that looking for some
%stabilization of the quantities $\bar{\sigma}_1^{(m)}\bar{\sigma}%
%_{m+1}^{(m+1)}$ is indeed meaningful.
We also emphasize that the behavior of the sequence $(\zeta_m)_{m\geq 1}$ is not monotonic due to the loss of orthogonality in the columns of the matrix $W_m$ in (\ref{ArnoldiDec}). For this set of experiments, the modified Gram-Schmidt implementation of the Arnoldi algorithm was considered, and the TF-CGLS approximations are not very affected by the loss of orthogonality (as the stopping criteria for the first cycle of Arnoldi iterations prescribe to stop after 20 iterations, i.e., before a severe loss of orthogonality sets in). However, when a larger number of Arnoldi iterations is expected during the first cycle of Algorithm 1, one may consider the more numerically accurate (and more expensive) Householder-Arnoldi implementation, in order to reduce the effect of the loss of orthogonality (see \cite[\S 6.3]{Saad} for details). In the right frame of Figure \ref{fig:ilaplace1_3}, the best solutions achieved by GMRES, CGLS, and TF-CGLS
are plotted: while the CGLS and TF-CGLS solutions are basically aligned with
the exact one, this is not the case for the GMRES solution, which is heavily
mismatched on the left boundary, and presents some light spurious oscillations on the
right.

\begin{figure}[tbp]
\centering
\begin{tabular}{c}
\textbf{{\small {Relative Error History}}} \\
\includegraphics[width=11cm]{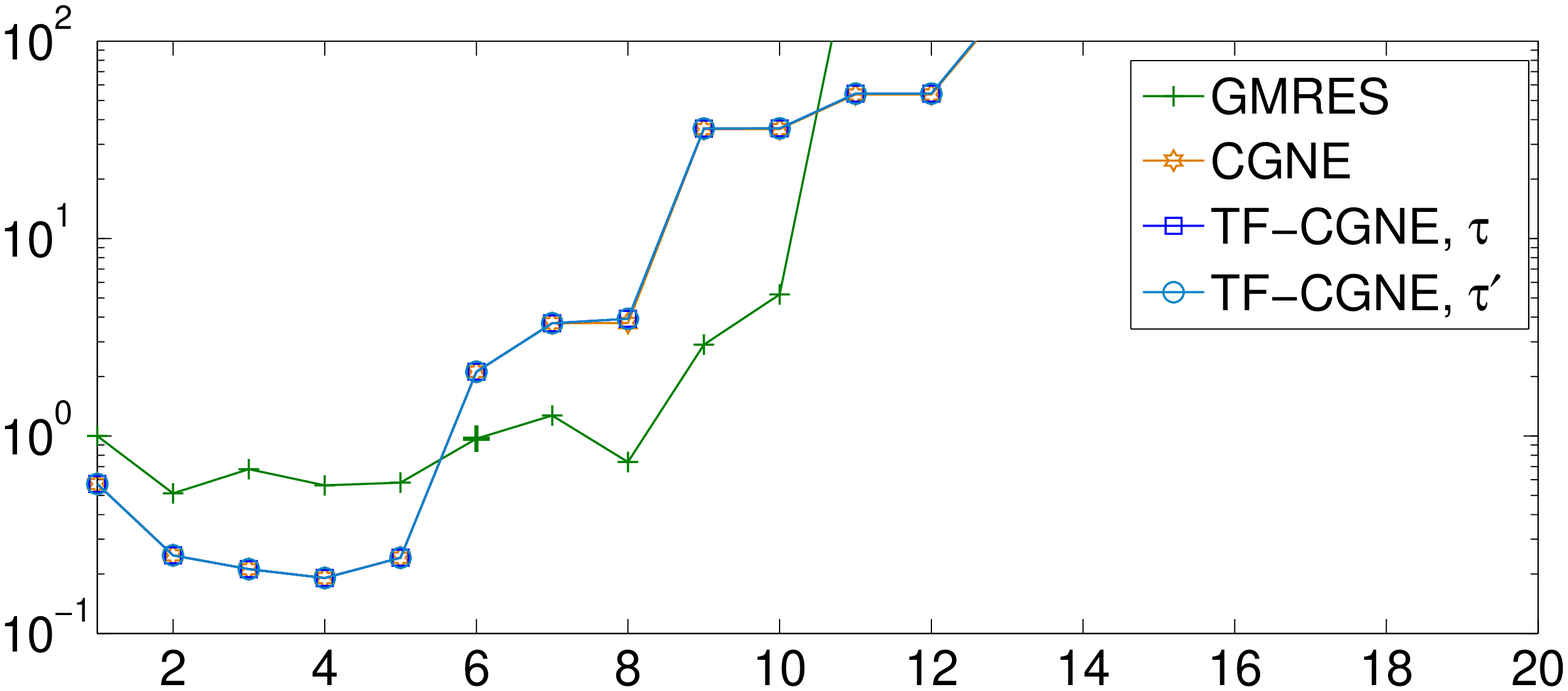} \\
\textbf{{\small {Relative Residual History}}} \\
\includegraphics[width=11cm]{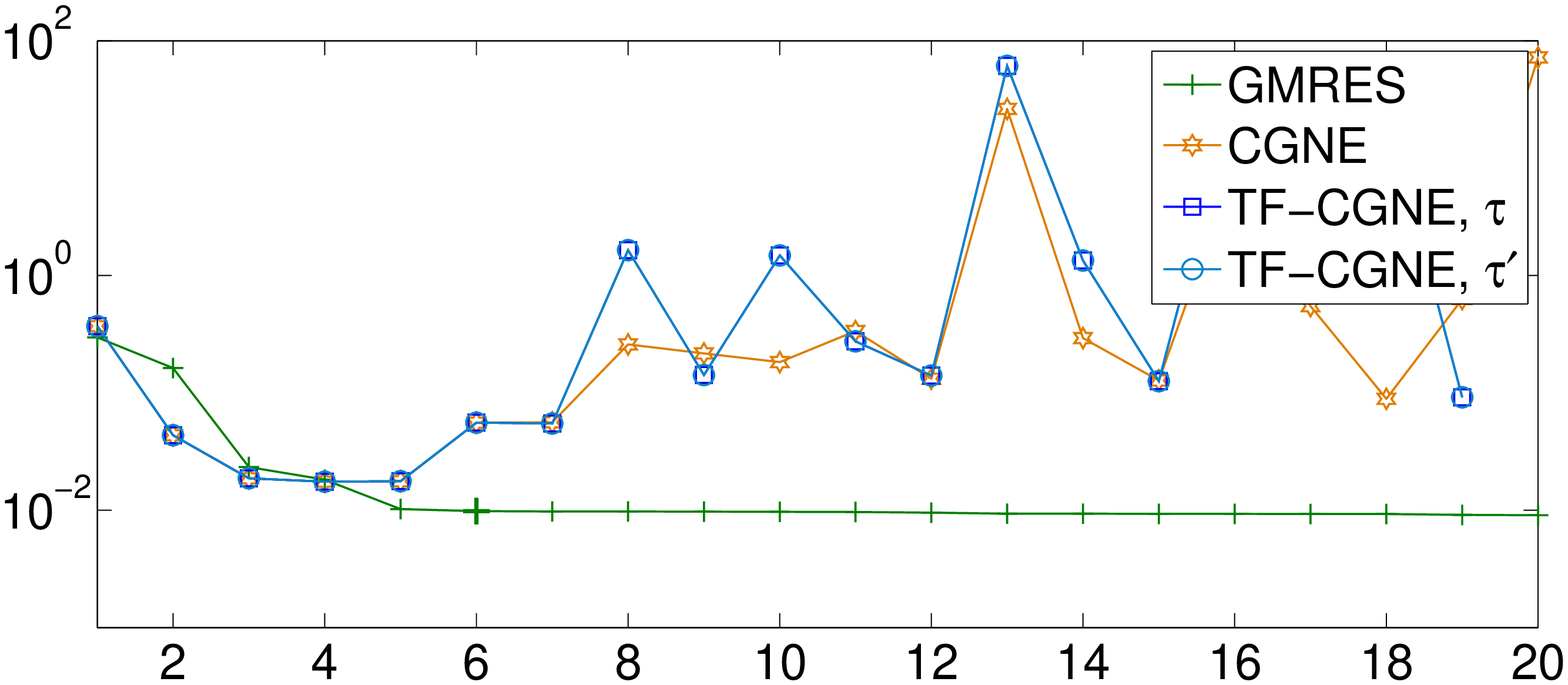}%
\end{tabular}%
\caption{Test problem \texttt{i\_laplace} (\protect\ref{ilaplace1}), with $f(t)=\exp(-t/2)$, $N=100$, and $\weps = 10^{-2}$. Upper
frame: relative errors versus number of iterations. Lower frame: relative
residuals versus number of iterations.}
\label{fig:ilaplace1_2}
\end{figure}
Figure \ref{fig:ilaplace1_2} compares the GMRES, CGNE, and TF-CGNE methods,
and has the same layout as Figure \ref{fig:ilaplace1_1}. One can clearly see
the performance of \textquotedblleft minimal error\textquotedblright\
methods to be much worse than the performance of \textquotedblleft minimal
residual\textquotedblright\ methods and, in particular, stopping criteria based on the discrepancy principle fail in this setting (this agrees with the analysis performed in \cite[Chapter 4]{Han95}). Despite this, the TF-CGNE method is
able to reproduce quite faithfully the behavior of CGNE (both in terms of
relative errors and relative residuals). Further tests with CGNE and TF-CGNE will not be performed in the following experiments.

\begin{figure}[tbp]
\centering
\begin{tabular}{cc}
\hspace{-0.7cm}\textbf{{\small {(a)}}} & \hspace{-0.7cm}\textbf{{\small {(b)}}} \\
\hspace{-0.7cm}\includegraphics[width=6.2cm]{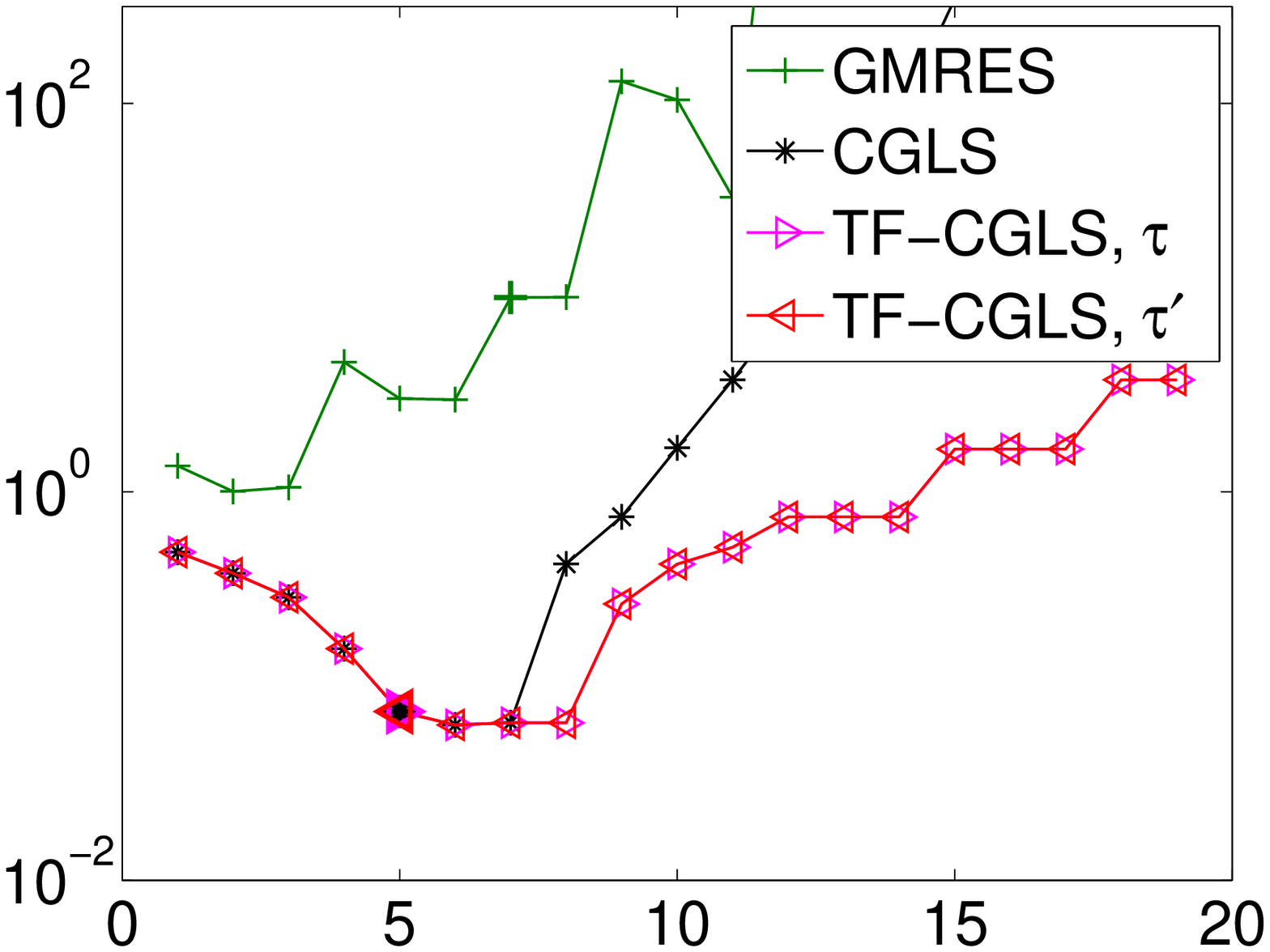} & %
\hspace{-0.7cm}\includegraphics[width=6.2cm]{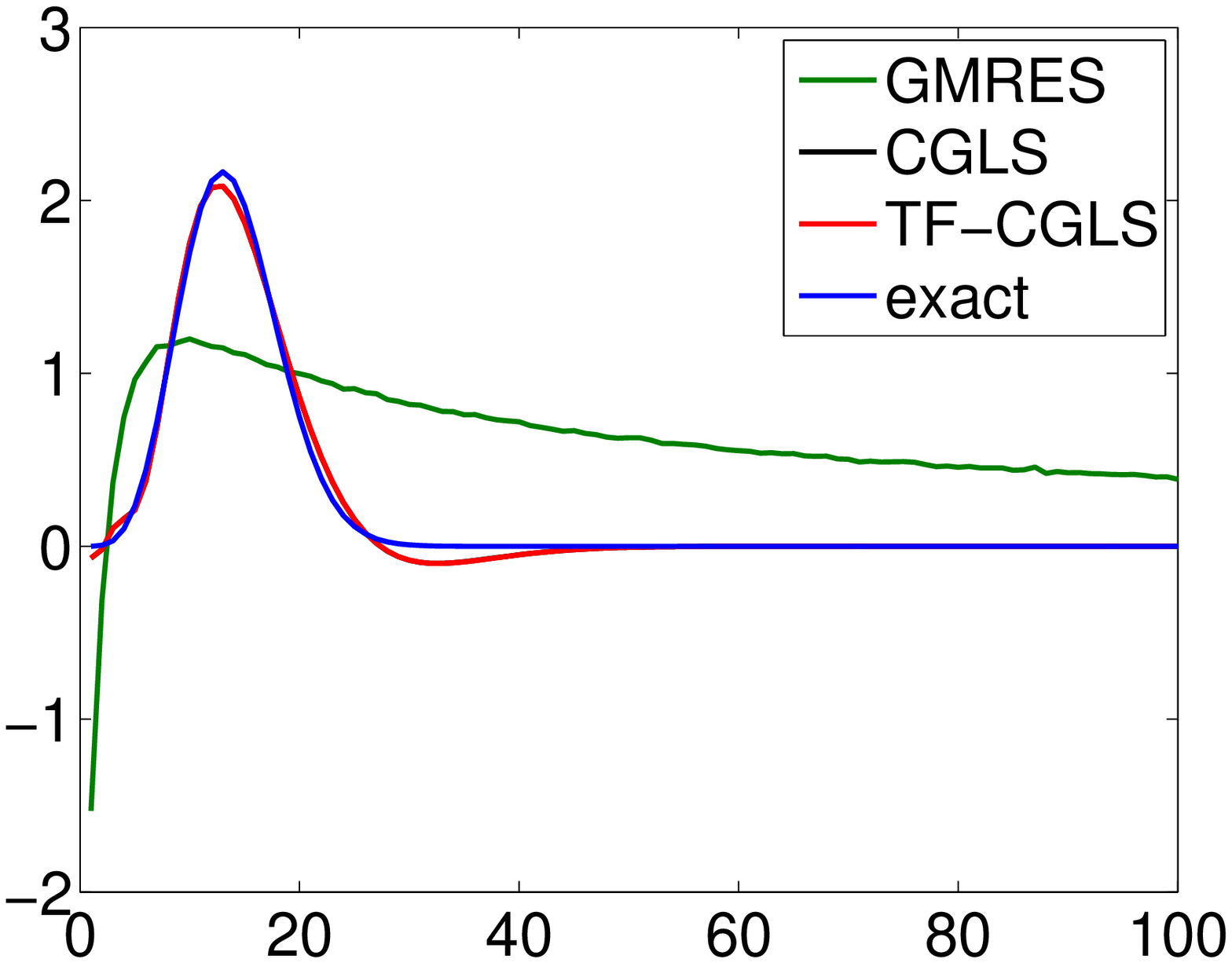}%
\end{tabular}%
\caption{Test problem \texttt{i\_laplace} (\protect\ref{ilaplace1}), with $f(t)=t^2\exp(-t/2)$, $N=100$, and $\weps = 10^{-2}$.
\textbf{(a)} Relative error history. \textbf{(b)} Best
approximations achieved by the GMRES, CGS, and TF-CGLS methods.}
\label{fig:ilaplace2}
\end{figure}
Finally, in Figure \ref{fig:ilaplace2} we consider the inverse Laplace transform (\ref{ilaplace1}) of the function $f(t)=t^2\exp(-t/2)$. We set again $N=100$, so that (\ref{dist_ilaplace}) still holds. The left frame of Figure \ref{fig:ilaplace2} shows the history of the relative errors, and enraged markers are used to highlight the iterations satisfying the discrepancy principle; for the TF-CGLS method, (\ref{stopO}) and (\ref{stopOvar3}) are satisfied after 20 and 21 Arnoldi iterations, respectively. The right frame of Figure \ref{fig:ilaplace2} displays the best reconstructions obtained by different methods and, also for this example, the CGLS and TF-CGLS solutions almost coincide, and they are very close to the exact one: the improvement of TF-CGLS over GMRES is clearly visible. Table \ref{tab:expo} reports numerical values for this experiment.
\item \texttt{\textbf{baart}}. This is an artificial Fredholm integral equation of the first kind, whose discretization is available within \cite{RegT}. We set $N=200$, so that $\|A-A^T\|/\|A\|=6.0345\cdot 10^{-1}$. The value \linebreak[4]$\tau = 10^{-10}$ is chosen for the stopping criterion in (\ref{stopO}), which is satisfied after 9 iterations; $\tau^{\prime}=10^{-14}$ is chosen for (\ref{stopOvar3}), which holds after 15 iterations. The modified Gram-Schmidt implementation of the Arnoldi algorithm is considered.
\begin{figure}[tbp]
\centering
\begin{tabular}{cc}
\hspace{-0.7cm}\textbf{{\small {(a)}}} & \hspace{-0.7cm}\textbf{{\small {(b)}}} \\
\hspace{-0.7cm}\includegraphics[width=6.2cm]{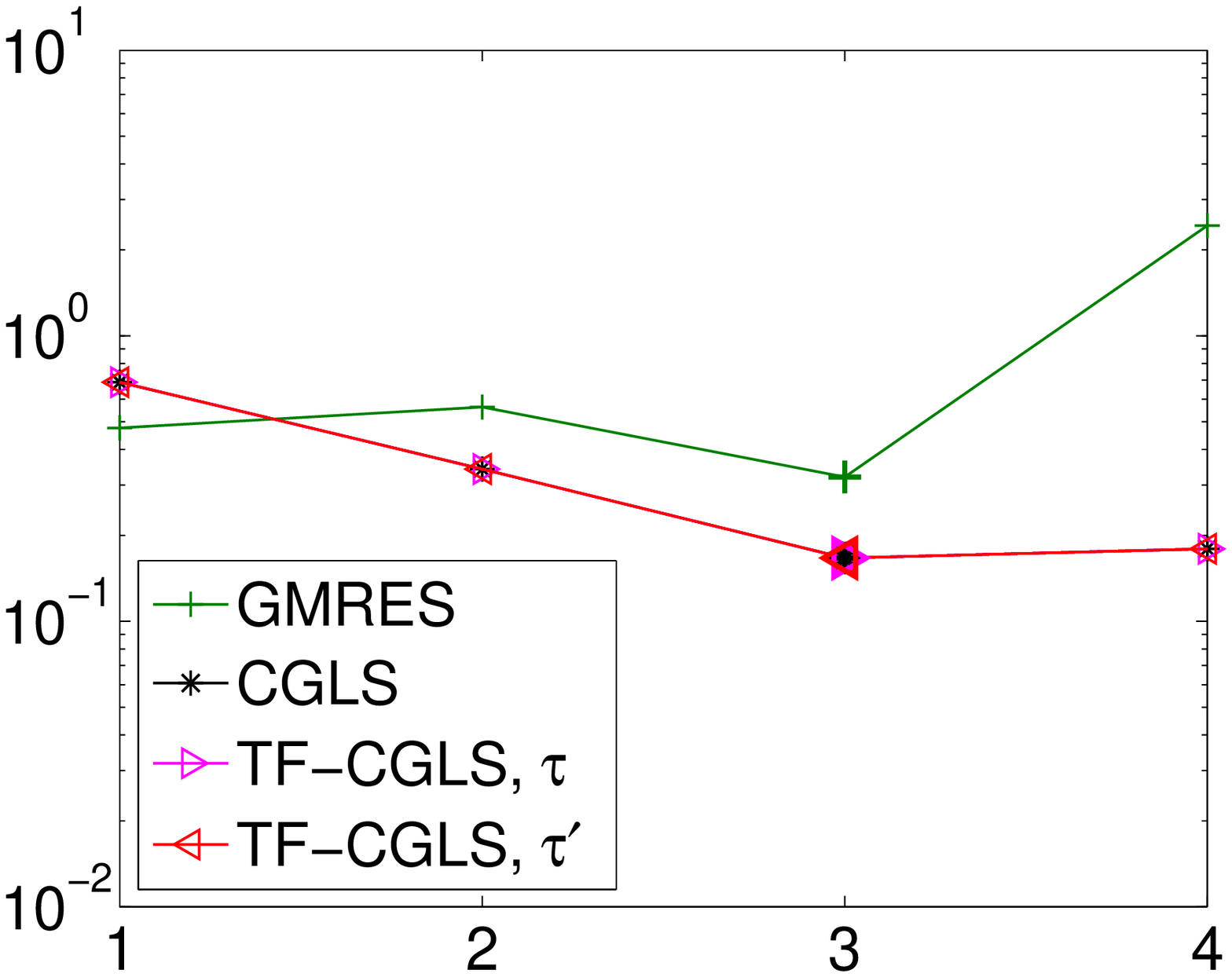} & %
\hspace{-0.7cm}\includegraphics[width=6.2cm]{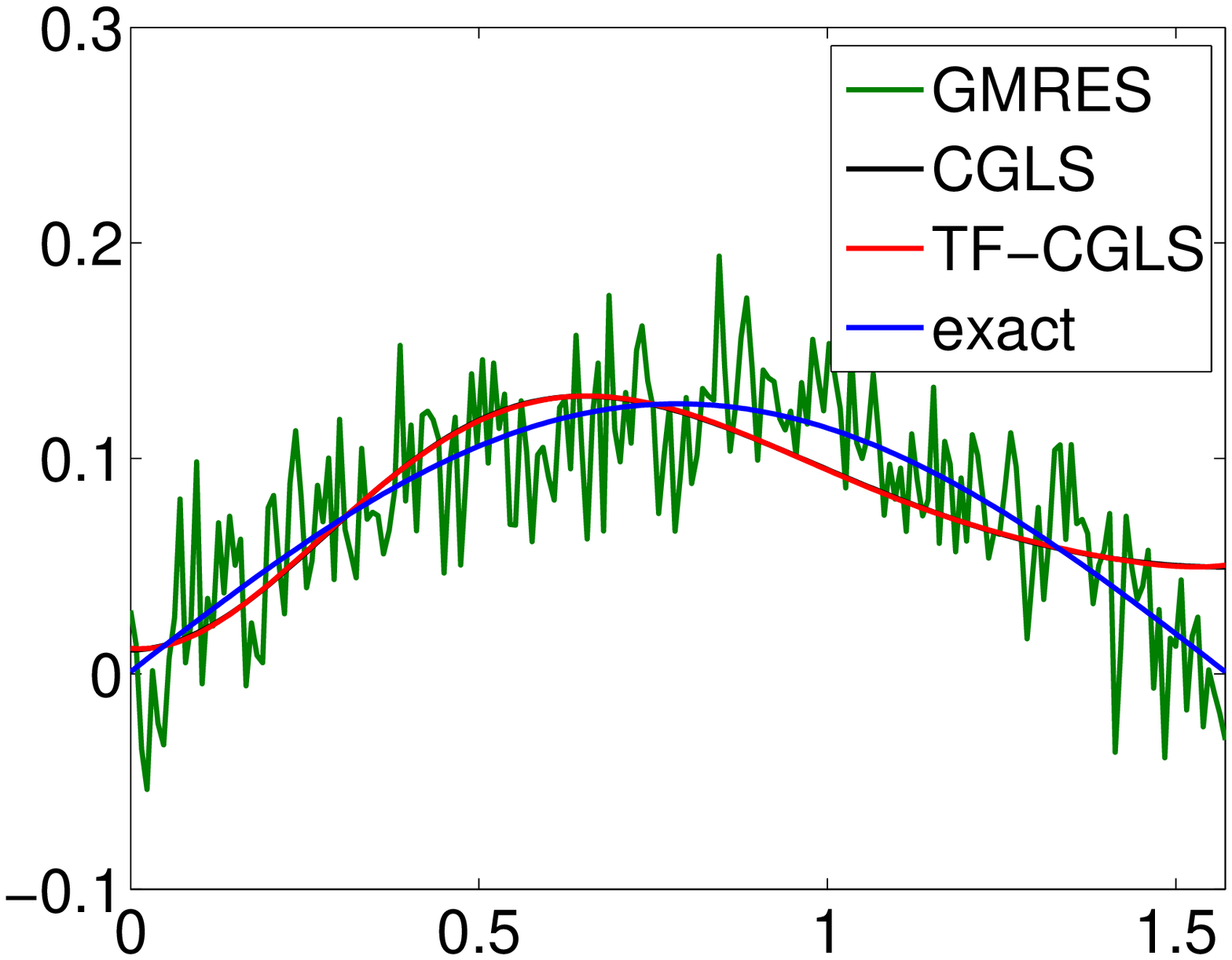}%
\end{tabular}%
\caption{Test problem \texttt{baart}, with $N=200$ and $\weps = 10^{-2}$.
\textbf{(a)} Relative error history. \textbf{(b)} Best
approximations achieved by the GMRES, CGS, and TF-CGLS methods.}
\label{fig:baart}
\end{figure}
The left frame of Figure \ref{fig:baart} shows the history of the relative errors for the GMRES, CGLS, and TF-CGLS methods, while its right frame displays the best reconstructions obtained by each method. The GMRES performance on the \texttt{baart} test problem is usually good, thanks to its favorable approximation subspace (see the analysis in \cite{JH07}). Indeed, the GMRES relative error is fairly low, and the behavior of the solution is somewhat recovered. However, the GMRES approximate solution has an evident oscillating behavior: this is due to a heavy presence of noise in the approximation subspace, and this drawback can be partially fixed by considering the range-restricted GMRES method \cite{RRGMRESfirst}. It is also clear that, for this test problem, the performance of the TF-CGLS and CGLS method is almost identical (the corresponding lines coincide in the graphs of Figure \ref{fig:baart}). Numerical values for this test problem are reported in Table \ref{tab:expo}.
\item \texttt{\textbf{heat}}. We consider a discretization of the inverse heat equation formulated as a Volterra integral equation of the first kind, as provided within \cite{RegT}. We choose $N=200$, so that $\|A-A^T\|/\|A\|=1.1244$; this problem can be regarded as numerically rank-deficient, with numerical rank equal to 195. According to the analysis in \cite{JH07}, GMRES does not converge to $A^\dagger b$ for this problem, as the null space of $A$ and $A^T$ are different. The stopping criteria (\ref{stopO}) with $\tau = 10^{-10}$, and (\ref{stopOvar3}) with $\tau^{\prime}=10^{-14}$, are both satisfied after 40 Arnoldi iterations (i.e., the maximum allowed number of iterations for this set of experiments). Because of this, the results can be affected by the loss of orthogonality in the Arnoldi vectors and, in order to assess the impact of this phenomenon, we consider the performance of both the modified Gram-Schmidt (MGS) and the Householder (HH) implementations of the Arnoldi algorithm.
\begin{figure}[tbp]
\centering
\begin{tabular}{cc}
\hspace{-0.7cm}\textbf{{\small {(a)}}} & \hspace{-0.7cm}\textbf{{\small {(b)}}} \\
\hspace{-0.7cm}\includegraphics[width=6.2cm]{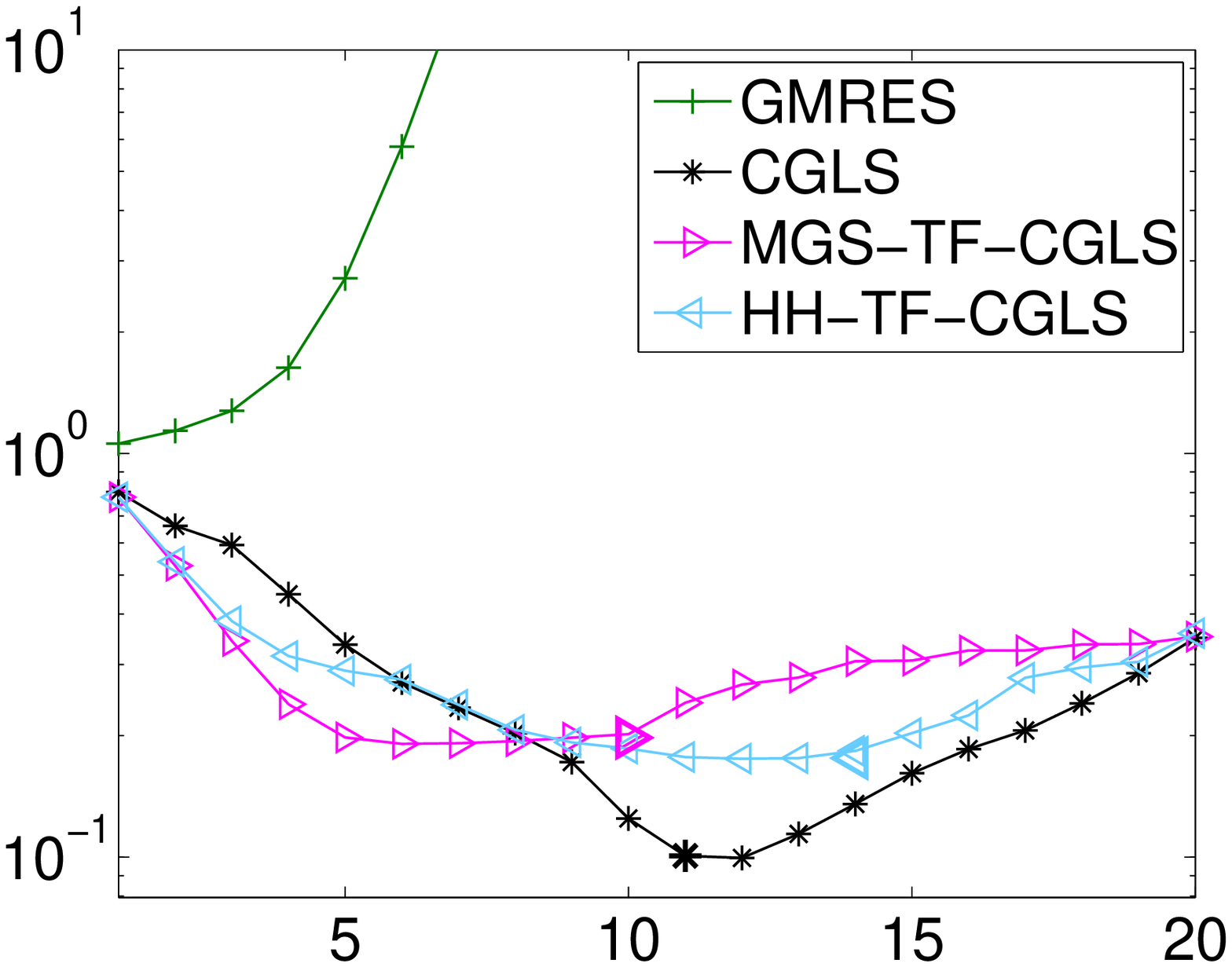} & %
\hspace{-0.7cm}\includegraphics[width=6.2cm]{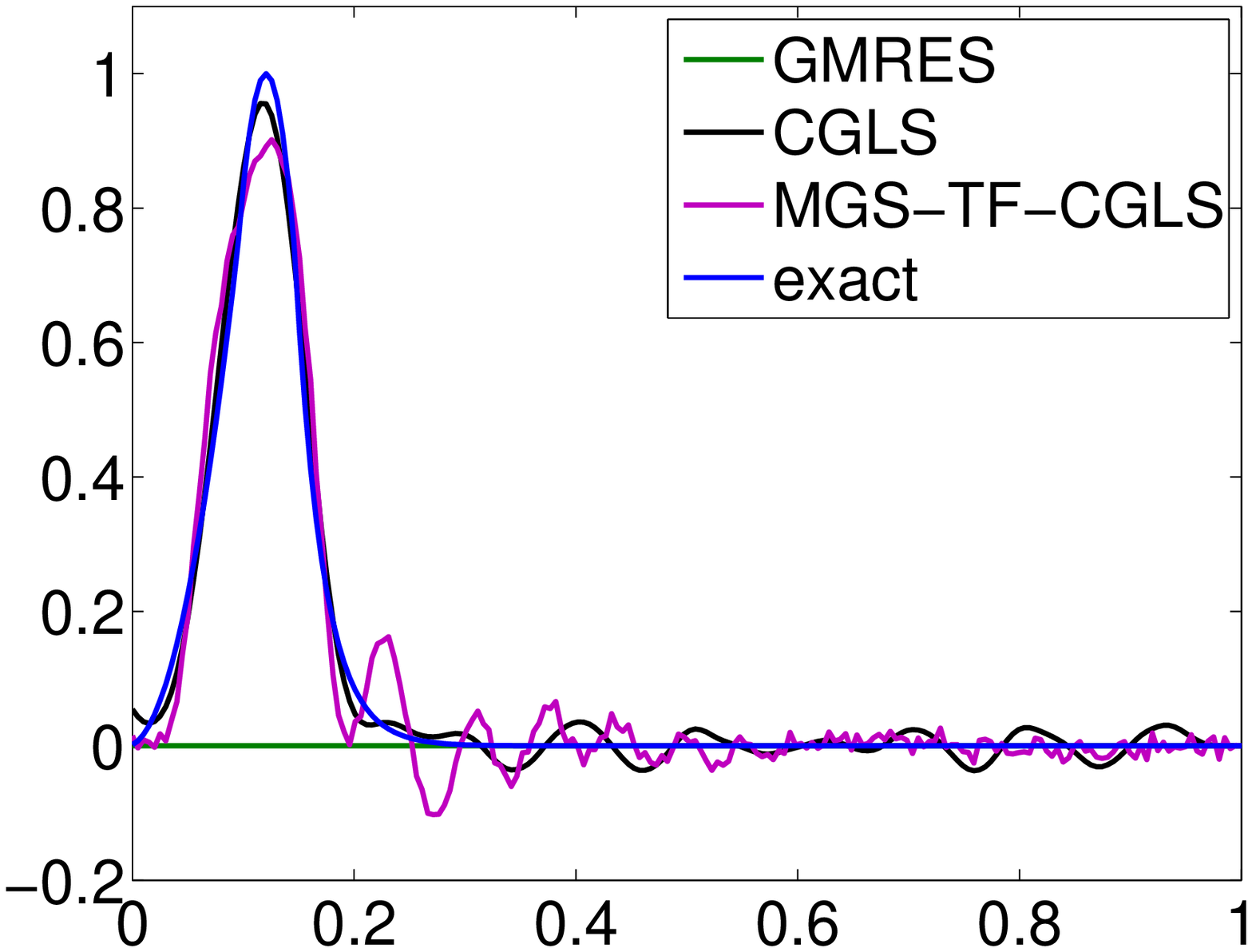}%
\end{tabular}%
\caption{Test problem \texttt{heat}, with $N=200$, and $\weps = 10^{-2}$.
\textbf{(a)} Relative error history, comparing GMRES, CGLS, TF-CGLS with modified Gram-Schmidt implementation (MGS-TF-CGLS), and TF-CGLS with Householder implementation (HH-TF-CGLS). \textbf{(b)} Best
approximations achieved by the GMRES, CGLS, and HH-TF-CGLS methods.}
\label{fig:heat}
\end{figure}
The left frame of Figure \ref{fig:heat} shows the history of the relative errors for GMRES, CGLS, and TF-CGLS (with both the MGS and HH implementations). While CGLS delivers the best approximations, the quality of the TF-CGLS solutions is much better than the GMRES ones (which diverge). Moreover, the TF-CGLS solutions obtained by the MGC and the HH implementations are comparable. The right frame of Figure \ref{fig:heat} displays the most accurate approximations obtained by each method: in the GMRES case, this is the zero solution (i.e., the initial guess); the TF-CGLS solution displays slightly more oscillations than the CGLS one, and this shortcoming might be partially remedied by including additional (standard form) Tikhonov regularization within the TF-CGLS iteration (in an hybrid-like fashion, see Remark \ref{rem:hybrid}).
%Among the latter, the Householder implementation The GMRES approximate solution has an evident oscillating behavior: this is due to a heavy presence of noise in the approximation subspace, and this drawback can be partially fixed by considering the range-restricted GMRES method \cite{RRGMRESfirst}. It is also clear that, for this test problem, the performance of the TF-CGLS and CGLS method is almost identical (the corresponding lines coincide in the graphs of Figure \ref{fig:baart}). Numerical values for this test problem are reported in Table \ref{tab:expo}.
%
%the oscillating behavior of the GMRES big oscillations are due to a fairly high noise level; by , one can get rid of oscillations. For this problem, the behavior of TF-CGLS and CGLS are the same. Numerical values are reported in Table \ref{tab:expo}.
%as the GMRES approximation subspaces is  contains information about the solution. space spanned by the Arnoldi basis vectors well approximate the
%
%he  and enraged markers are used to highlight the iterations satisfying the discrepancy principle; for the TF-CGLS method, (\ref{stopO}) and (\ref{stopOvar3}) are satisfied after 20 and 21 Arnoldi iterations, respectively. The right frame of Figure \ref{fig:ilaplace2} displays the  and, also for this example, the CGLS and TF-CGLS solutions almost coincide, and they are very close to the exact one: the improvement of TF-CGLS over GMRES is clearly visible. Table \ref{tab:expo} reports numerical values for this experiment.
\begin{table}
% table caption is above the table
\caption{Average results over 20 runs of some of the test problems in the first set of experiments, with $\weps=10^{-2}$. The TF-CGLS relative error is the one atteined when stopping criteria (\ref{stopOvar3}) and (\ref{stopI}) are satisfied; the GMRES and CGLS ones are computed at the iterations (\ref{stopI}).}
%The GMRES and CGLS relative errors are those attained when stopping criterion (\ref{stopI}) is satisfied; Summary of the results of the first set of experiments
\label{tab:expo}       % Give a unique label
% For LaTeX tables use
\begin{tabular}{lllll}
\hline\noalign{\smallskip}
 & Relative Error & (\ref{stopI}) & (\ref{stopO}) & (\ref{stopOvar3})\\
\noalign{\smallskip}\hline\noalign{\smallskip}
& \multicolumn{4}{c}{first \texttt{i\_laplace} test problem, $N=100$}\\
\noalign{\smallskip}\hline\noalign{\smallskip}
GMRES & $6.1034\cdot 10^{-1}$ & $5.1$ & - & - \\
CGLS & $1.5342\cdot 10^{-1}$ & $5.3$ & - & - \\
TF-CGLS & $1.5358\cdot 10^{-1}$ & $5.3$ & $20.5$ & $19.4$\\
\noalign{\smallskip}\hline\noalign{\smallskip}
& \multicolumn{4}{c}{second \texttt{i\_laplace} test problem, $N=100$}\\
\noalign{\smallskip}\hline\noalign{\smallskip}
GMRES & $3.0486\cdot 10^0$  & 7.1  & - & -  \\
CGLS & $7.5968\cdot 10^{-2}$ & 5 & - & -  \\
TF-CGLS & $7.6011\cdot 10^{-2}$ & 5 & 20.2 & 19.5\\
%\noalign{\smallskip}\hline\noalign{\smallskip}
%& \multicolumn{4}{c}{\texttt{heat}, $N=200$}\\
%\noalign{\smallskip}\hline\noalign{\smallskip}
%GMRES &   &   &  &   \\
%CGLS & &   &  &   \\
%TF-CGLS & &  &  & \\
\noalign{\smallskip}\hline\noalign{\smallskip}
& \multicolumn{4}{c}{\texttt{baart}, $N=200$}\\
\noalign{\smallskip}\hline\noalign{\smallskip}
GMRES & $5.6460\cdot 10^{-1}$ & $3$  & - & - \\
CGLS & $1.6704\cdot 10^{-1}$ & $3$ & - & - \\
TF-CGLS & $1.6719\cdot 10^{-1}$ & $3$ & $8.7$ & $16.5$\\
\noalign{\smallskip}\hline
\end{tabular}
\end{table}
\end{enumerate}
\paragraph{Second set of experiments.}
We consider 2D image restoration problems, where the available images are affected by a spatially invariant blur and Gaussian white noise. In this setting, given a point-spread function (PSF) that describes how a single %white
pixel is deformed, a blurring process is modeled as a 2D convolution of the PSF and an exact discrete finite image $\Xex\in\R^{n\times n}$. Here and in the following, a PSF is represented as a 2D image $P\in\R^{q\times q}$, with $q\ll n$, typically. One can immediately see that, if $P_{i,j}\neq 0$, $i,j=1,\dots,q$, the deblurring problem is underdetermined since, when convolving $P$ with $\Xex$,
%the elements $X_{i,j}$,
%\[
%\begin{array}{lcllcl}
%i &=& 1,\dots,\lceil q/2\rceil,n-\lfloor q/2\rfloor+1,\dots,n, &\:j&=&1,\dots,n,\\
%i &=& 1,\dots,n, &\:j&=&1,\dots,\lceil q/2\rceil,n-\lfloor q/2\rfloor+1,\dots,n,
%\end{array}
%\]
additional (and unavailable) values of the exact image outside $\Xex$ should be considered.
%In the above expression, $\lceil q/2\rceil$ (and $\lfloor q/2\rfloor$) represent the nearest integer greater than or equal (and less than or equal) $q/2$.
A popular approach to overcome this phenomenon is to impose boundary conditions within the blurring process, i.e., to prescribe the behavior of the exact image outside $\Xex$ (see \cite{BerNagy13} and the references therein). A 2D image restoration problem can be rewritten as a linear system (\ref{sys}), where the 1D array $b$ is obtained by stacking the columns of the 2D blurred and noisy image (so that $N=n^2$), and the square matrix $A$ incorporates the convolution process together with the boundary conditions. Although popular choices such as zero or periodic boundary conditions are particularly simple, they often give rise to unwanted artifacts during the restoration process. The use of reflective or anti-reflective boundary conditions usually gives better results, as a sort of continuity of the image outside $\Xex$ is imposed (and, in the anti-reflective case, also a sort of continuity of the normal derivative). Antireflective boundary conditions (ARBC) were originally introduced in \cite{ARBCfirst}, and further analyzed in several papers (see \cite{DMR15} and the references therein). When dealing with a nonsymmetric PSF and ARBC, matrix-vector products with $A$ can be implemented by fast algorithms, but the same is not true for matrix-vector products with $A^T$ (as, to the best of our knowledge, there is no known algorithm that can efficiently exploit the structure of $A^T$). Therefore, in practice, $A^T$ is often approximated by a matrix $A^\prime$ defined by first rotating of $180\degree$ the PSF $P$ used to build $A$ (so to obtain the PSF $P^\prime$) and then modeling the 2D convolution process with $P^\prime$ and ARBC. In other words, image deblurring problems with a nonsymmetric PSF and ARBC can be only handled by transpose-free solvers. As addressed in Section \ref{sect:intro}, the authors of \cite{DMR15} propose to solve the equivalent, and somewhat symmetrized, linear system $AA^\prime y=b$ (with $x=A^\prime y$) by GMRES: in the following, this method is referred to as \tql RP-GMRES\tqr. In this set of experiments we compare the GMRES and RP-GMRES methods with the TF-CGLS method, in order to assess if approximating $A^T$ by $\Amp$  (\ref{Aprime}) guarantees restored images of improved quality.
%and test the performance of TF-CGLS.
Our experiments are created by considering three different grayscale test images of size $256\times 256$ pixels, together with three different PSFs, and antireflective or reflective boundary conditions; the sharp images are artificially blurred, and noise of variable levels is added. Matrix-vector products are computed efficiently by using the routines in \emph{Restore Tools} \cite{RestTools}\footnote{An extension to handle ARBC within \emph{Restore Tools} is available at:\\\small\url{http://scienze-como.uninsubria.it/mdonatelli/Software/software.html}.}. In the first and second experiment, the blurred image is cropped in order to reduce the effect of the chosen boundary conditions (and not to commit \tql inverse crime\tqr, see \cite[Chapter 7]{PCH10}). The maximum number $\mmax$ of Arnoldi iterations for Algorithm \ref{alg:tfCG} is set to 50, and only the stopping criterion (\ref{stopOvar3}) is considered. The Gram-Schmidt implementation of the Arnoldi algorithm is tested.
%and the modified   is considered. %Since it is commonly acknowledged that performance of GMRES when applied to highly nonsymmetric blurring problems (with highly nonsymmetric PSF) is poor, the authors of xxx propose to use CGLS to solve a system or, even better, to apply GMRES to the re-blurred system.
\begin{enumerate}
\item \textbf{Anisotropic Gaussian blur.} For this experiment, the elements $P_{i,j}$ of the PSF $P$ are analytically given by the following expression
\[
p_{i,j} = \exp\left(-\frac{1}{2(s_1^2s_2^2-\rho^4)}\left(s_2^2(i-k)^2-2\rho^2(i-k)(j-\ell)+s_1^2(j-\ell)^2\right)\right)\,,
\]
where $i,j=1,\dots,d$, and $[k,\ell]$ is the center of the PSF. The values $s_1=4$, $s_2=1.3$, $\rho=2$, and $d=21$ are considered, and the noise level is $\weps=2\cdot10^{-2}$. ARBC are imposed. The test data are displayed in Figure \ref{fig:camerset}. Figure \ref{fig:camerrec} shows the best restorations achieved by each method; relative errors and the corresponding number of iterations are displayed in the caption.
\begin{figure}[tbp]
\centering
\begin{tabular}{ccc}
\hspace{-1.2cm}\textbf{{\small {exact}}} &
\hspace{-1.2cm}\textbf{{\small {PSF}}} &
\hspace{-1.2cm}\textbf{{\small {corrupted}}} \\
\hspace{-1.2cm}\includegraphics[width=5.2cm]{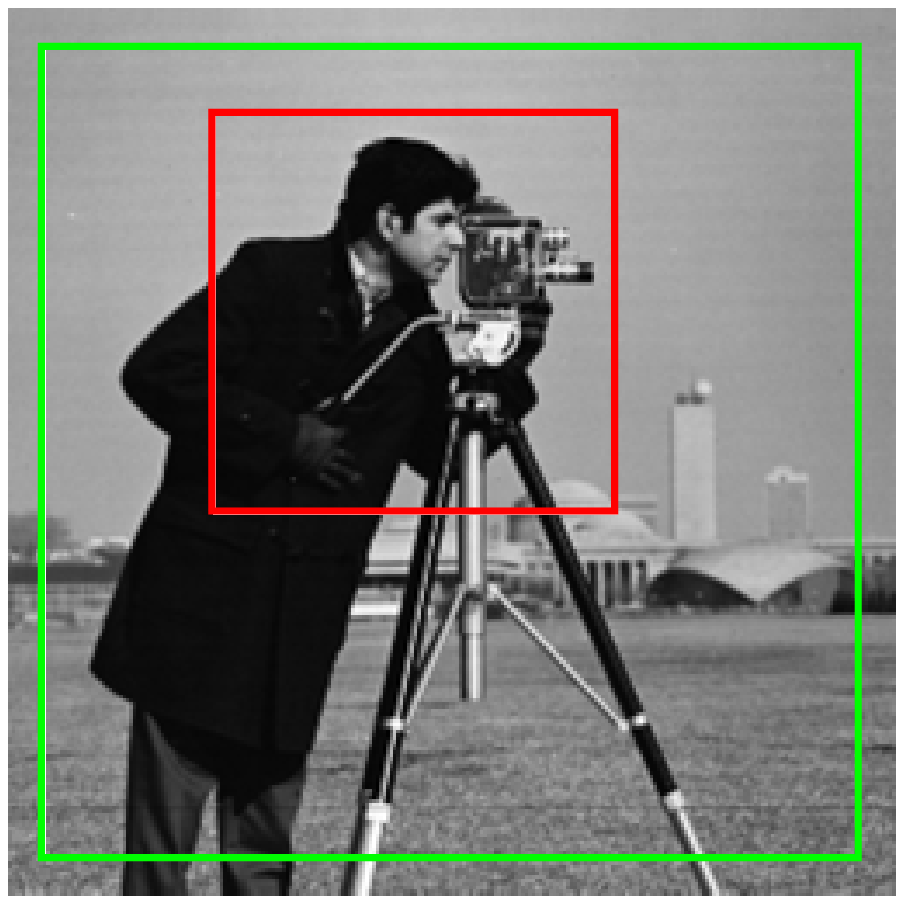} &
\hspace{-1.2cm}\includegraphics[width=5.2cm]{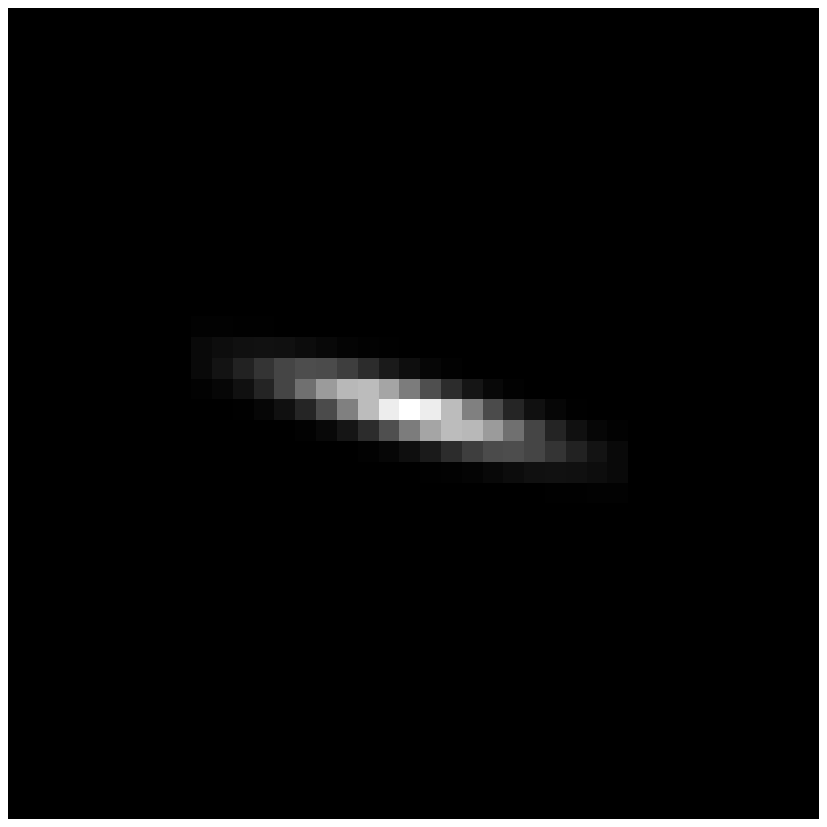} &
\hspace{-1.2cm}\includegraphics[width=5.2cm]{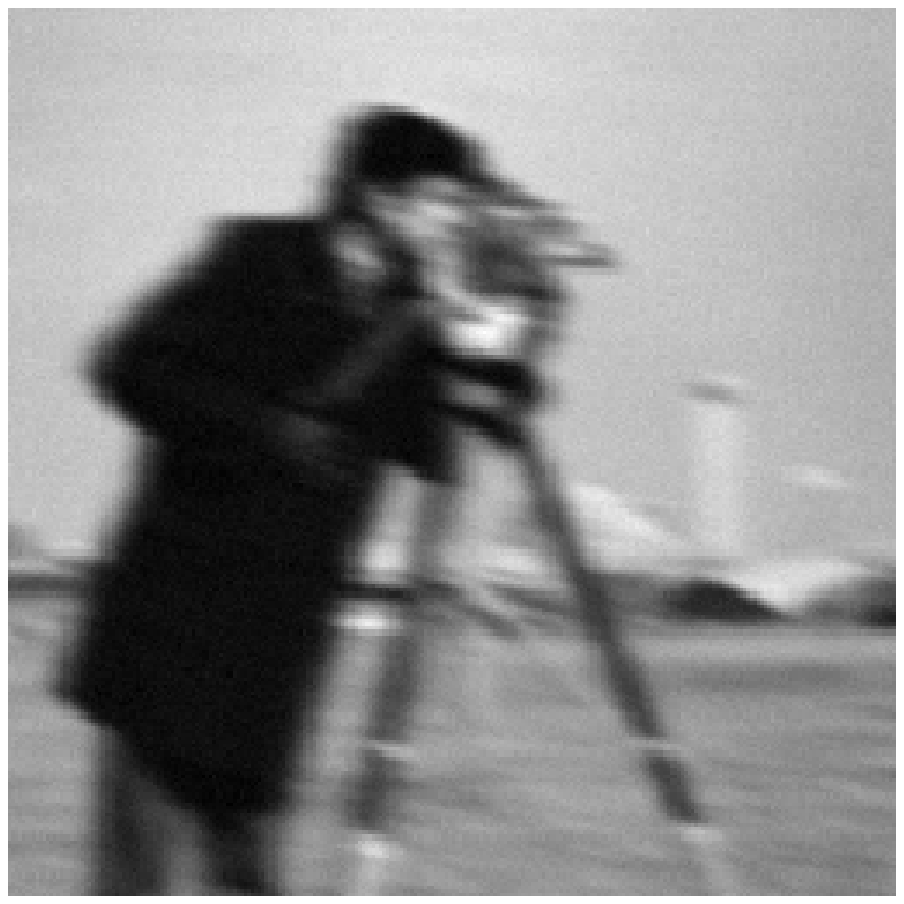}
\end{tabular}%
\caption{From left to right: exact image, where the cropped portion is highlighted; blow-up ($600\%$) of the anisotropic Gaussian PSF; blurred and noisy available image, with $\weps=2\cdot10^{-2}$.}
\label{fig:camerset}
\end{figure}
\begin{figure}[tbp]
\centering
\begin{tabular}{ccc}
\hspace{-1.2cm}\textbf{{\small {GMRES}}} &
\hspace{-1.2cm}\textbf{{\small {TF-CGLS}}} &
\hspace{-1.2cm}\textbf{{\small {RP-GMRES}}} \\
\hspace{-1.2cm}\includegraphics[width=5.2cm]{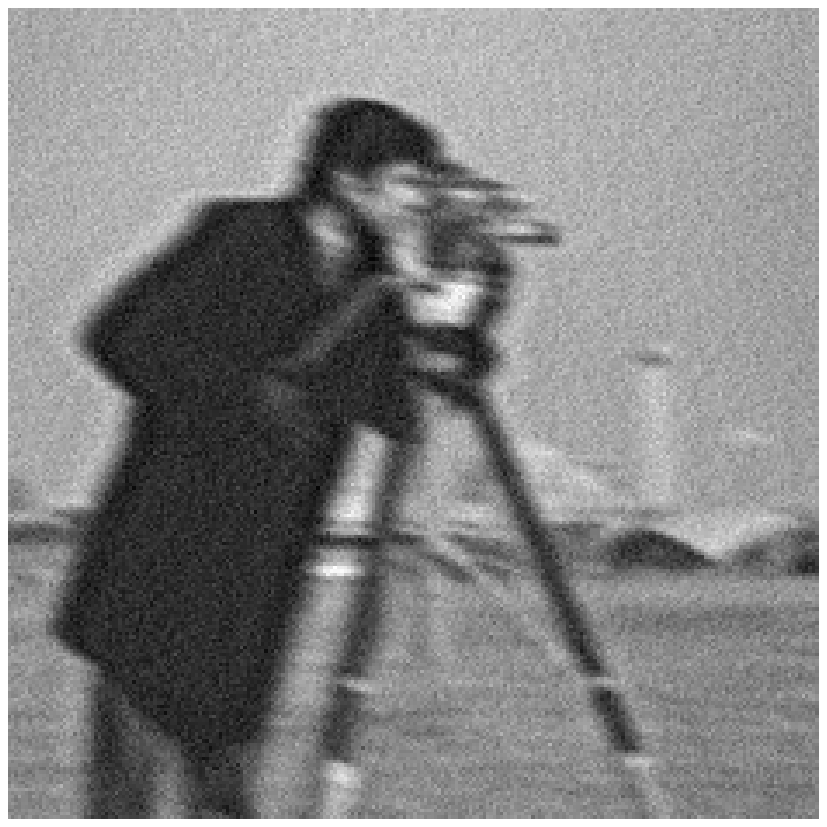} &
\hspace{-1.2cm}\includegraphics[width=5.2cm]{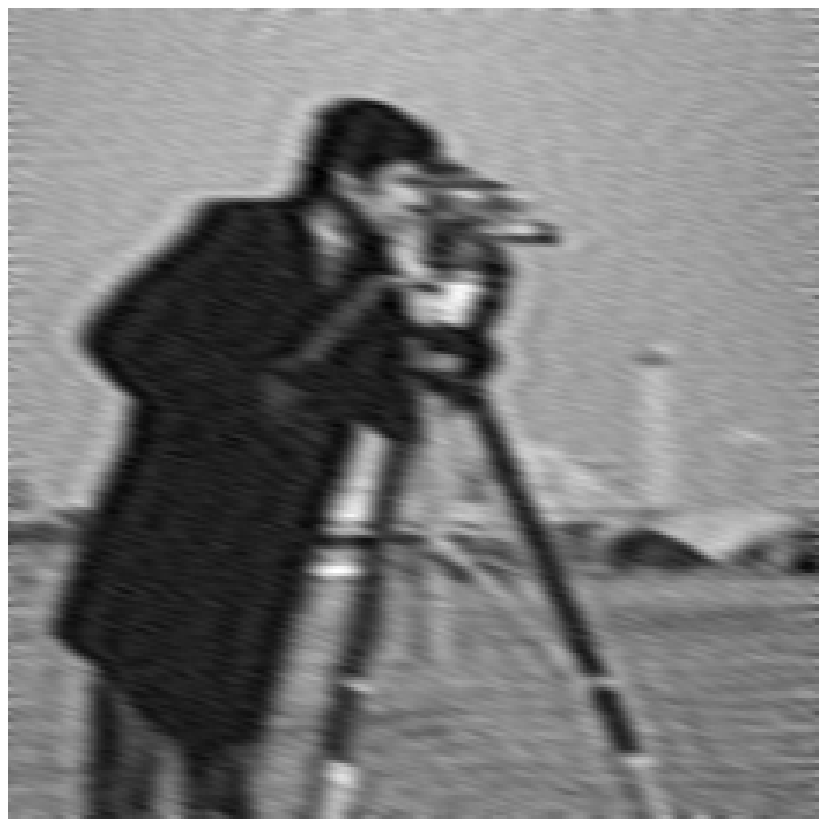} &
\hspace{-1.2cm}\includegraphics[width=5.2cm]{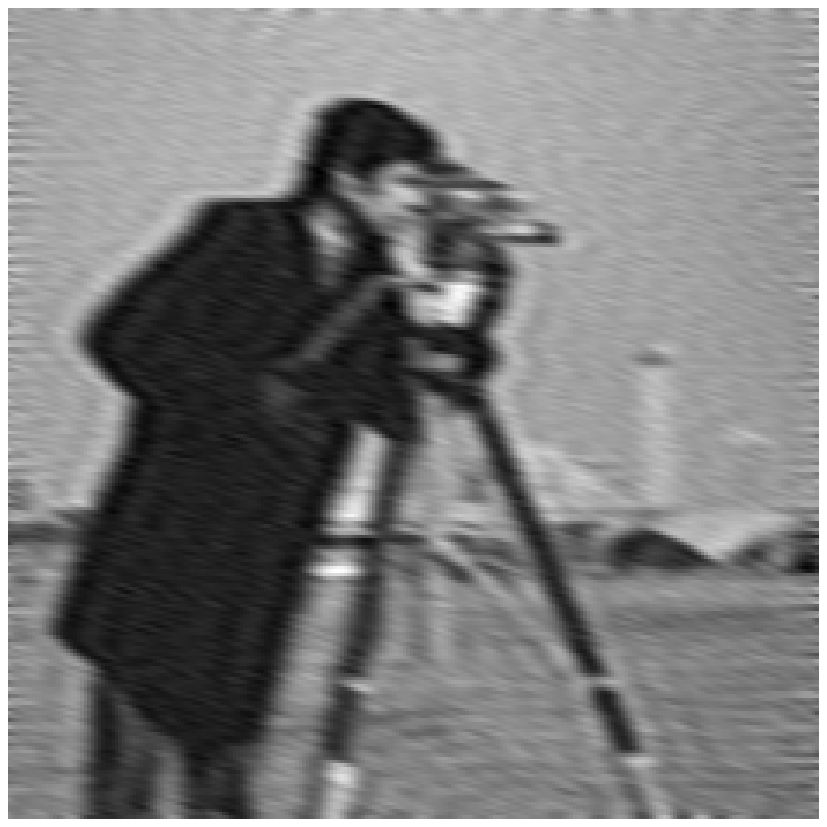}\vspace{-0.5cm}\\
\hspace{-1.2cm}\includegraphics[width=5.2cm]{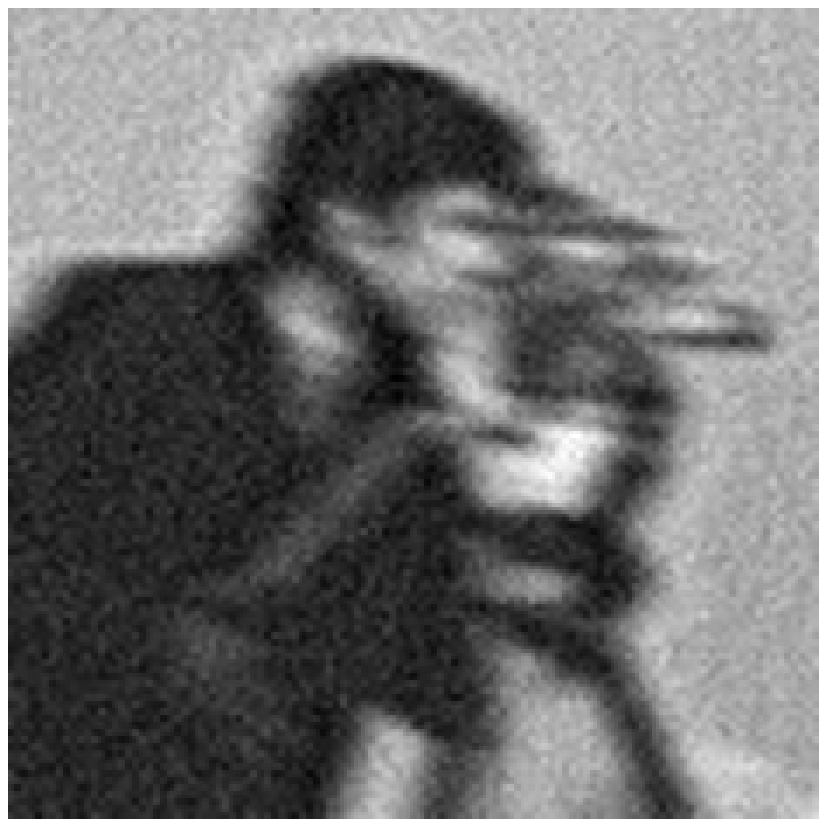} &
\hspace{-1.2cm}\includegraphics[width=5.2cm]{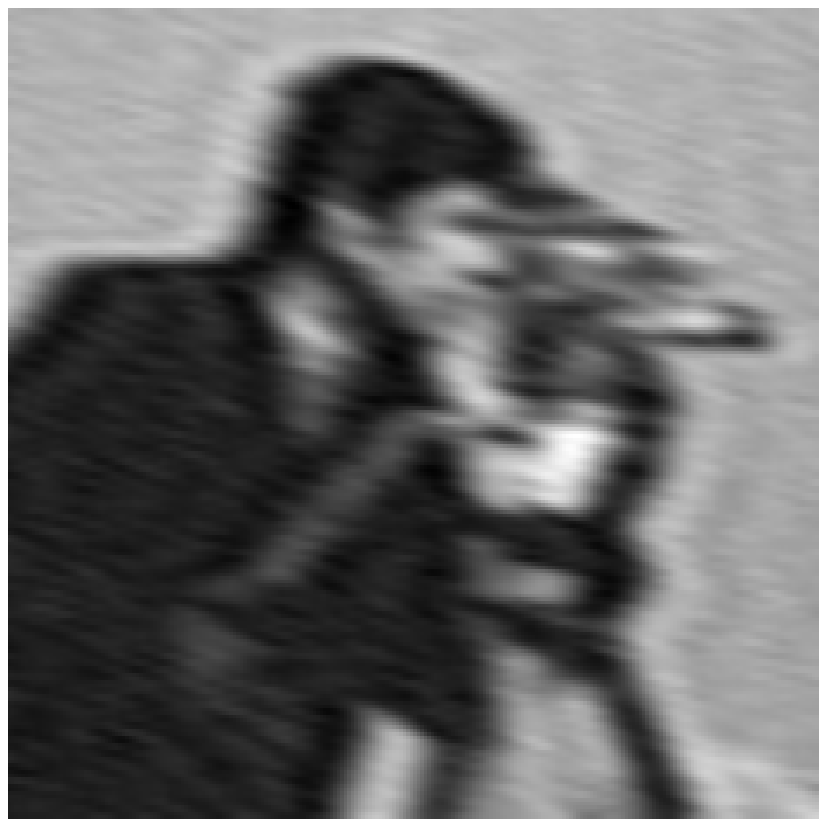} &
\hspace{-1.2cm}\includegraphics[width=5.2cm]{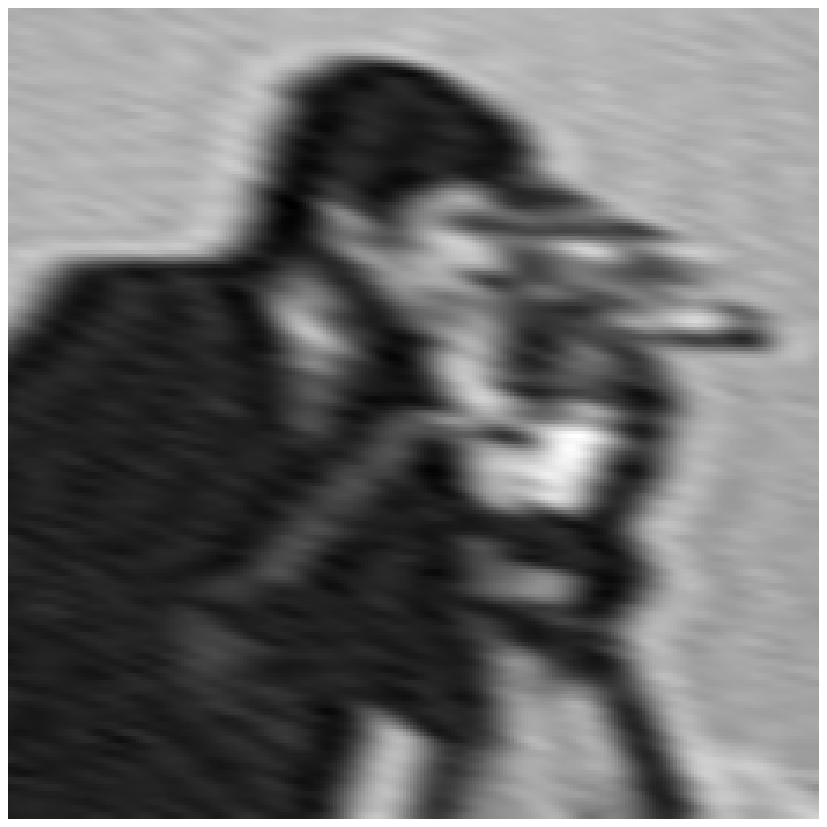}
\end{tabular}
\caption{The lower row displays blow-ups ($200\%$) of the restored images in the upper row. From left to right: standard GMRES method ($0.1483$, $m=4$); TF-CGLS method ($0.1344$, $m=14$, $k=12$); right-preconditioned GMRES ($0.1354$, $m=19$).}
\label{fig:camerrec}
\end{figure}
Looking at the restored images, it is evident that the GMRES one still appears pretty noisy and blurred. Though the relative error for the RP-GMRES restoration is slightly lower than the TF-CGLS one, the two images are visually very similar, and there is great improvement over the standard GMRES restoration. It should also be emphasized that the cost of each RP-GMRES iteration is dominated by two matrix-vector products (one with $A$, and one with $A^\prime$). Therefore, the cost of computing the GMRES, TF-CGLS, and RP-GMRES restorations is dominated by 4, 14, and 38 matrix-vector products, respectively. For this experiment, TF-CGLS can deliver a solution whose quality is almost identical to the RP-GMRES one, with great computational savings.
%Figure \ref{fig:camerrec}, has a lower that  \textbf{To be completed. As an useful remark: the computational cost for
%\cite{DMR15} is high, as at each iteration we need a matrix-vector product
%with $A^{\prime}$ and a matrix-vector product with $A$; for the TF-CG
%methods we only need a matrix-vector product with $A$.}
%Figure \ref{fig:camerrecz} reports the enlarged central part of the reconstructed images.
%\begin{figure}[tbp]
%\centering
%\begin{tabular}{ccc}
%\hspace{-0.7cm}\textbf{{\small {(a)}}} &
%\hspace{-0.7cm}\textbf{{\small {(b)}}} &
%\hspace{-0.7cm}\textbf{{\small {(c)}}} \\
%\end{tabular}
%\caption{\textbf{(a)} standard GMRES method; \textbf{(b)} transpose-free CGLS-like method; \textbf{(c)} right-preconditioned GMRES.}
%\label{fig:camerrecz}
%\end{figure}
\item \textbf{Motion blur.} The test data for this experiment are displayed in Figure \ref{fig:boatset}. We consider a $17\times 17$ PSF modeling diagonal motion blur, and ARBC are imposed. The noise level is $\weps = 5\cdot 10^{-3}$. Figure \ref{fig:boatrec} shows the best restorations achieved by each method; relative errors and the corresponding number of iterations are displayed in the caption.
\begin{figure}[tbp]
\centering
\begin{tabular}{ccc}
%\hspace{-0.7cm}\textbf{{\small {(a)}}} &
%\hspace{-0.7cm}\textbf{{\small {(b)}}} &
%\hspace{-0.7cm}\textbf{{\small {(c)}}} \\
\hspace{-1.2cm}\textbf{{\small {exact}}} &
\hspace{-1.2cm}\textbf{{\small {PSF}}} &
\hspace{-1.2cm}\textbf{{\small {corrupted}}} \\
\hspace{-1.2cm}\includegraphics[width=5.2cm]{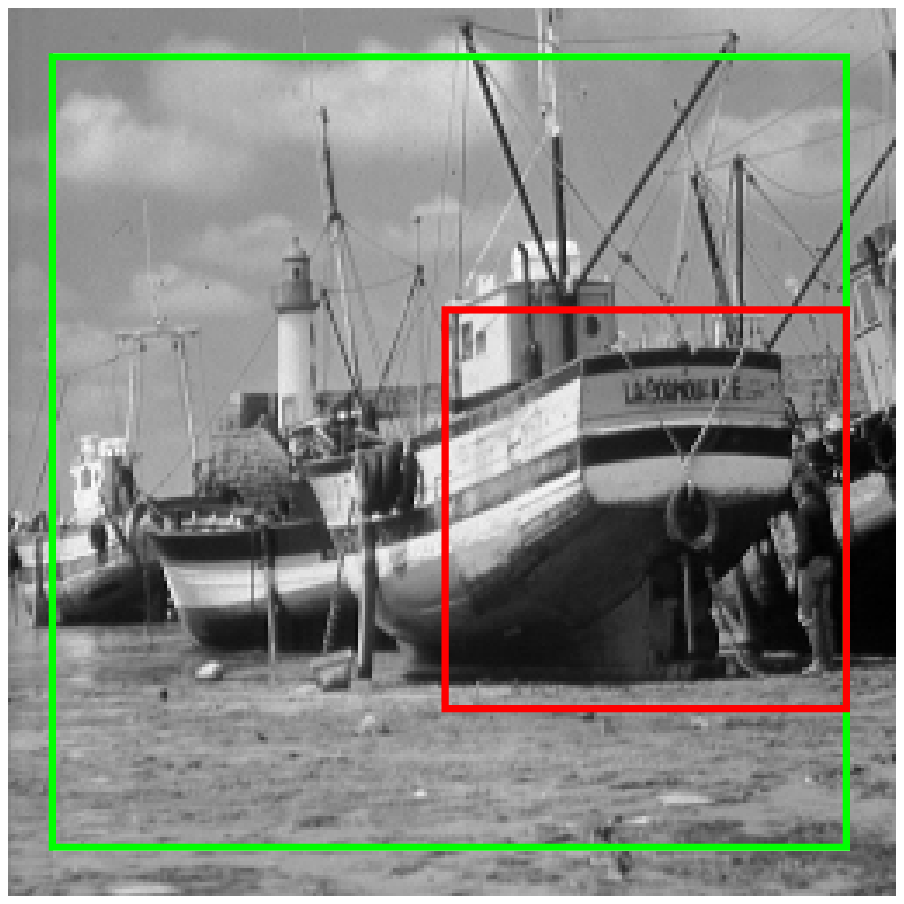} &
\hspace{-1.2cm}\includegraphics[width=5.2cm]{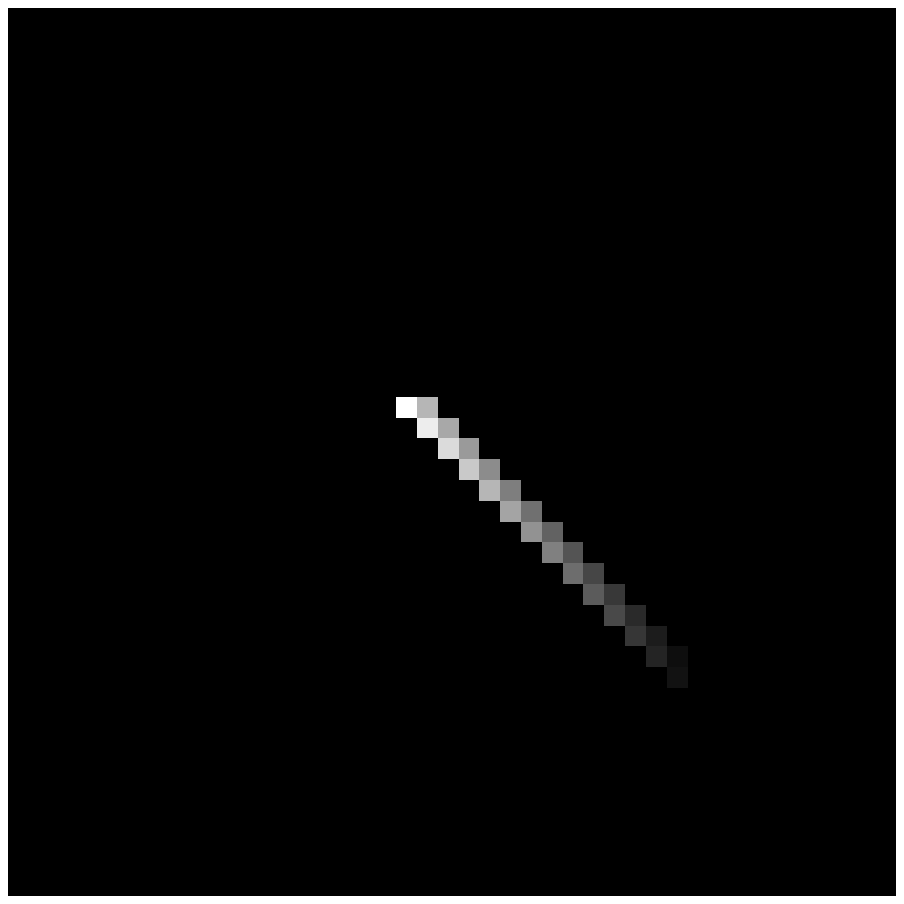} &
\hspace{-1.2cm}\includegraphics[width=5.2cm]{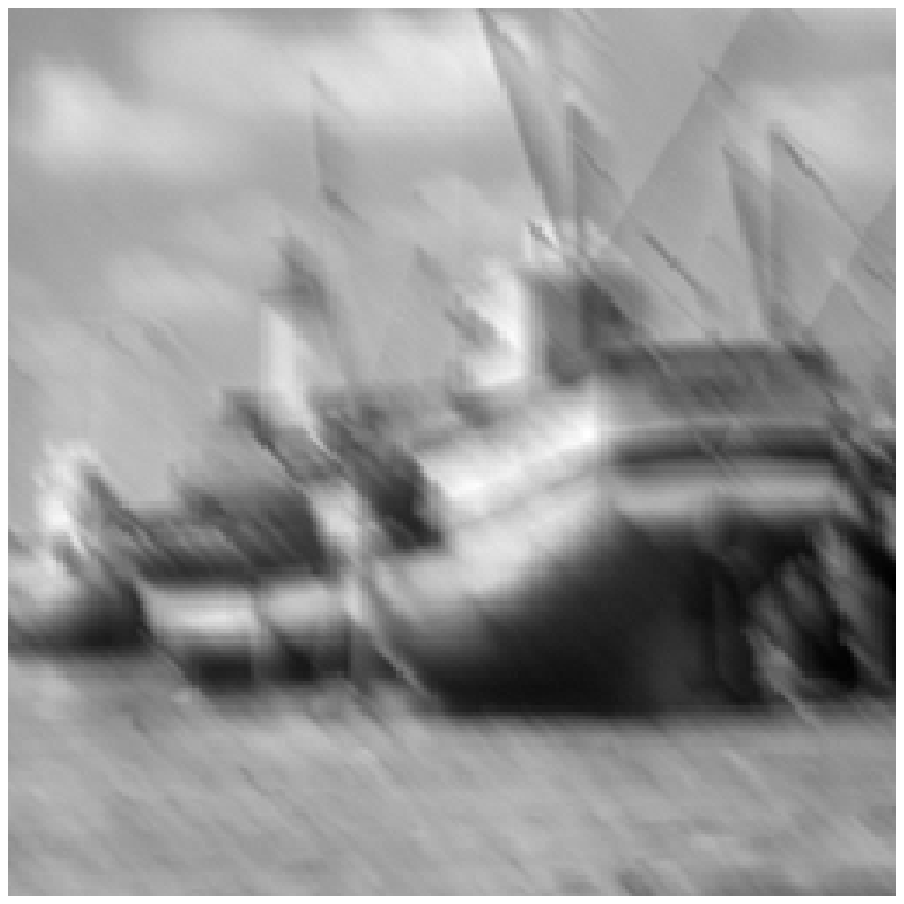}
\end{tabular}%
\caption{From left to right: exact image, where the cropped portion is highlighted; blow-up ($600\%$) of the diagonal motion PSF; blurred and noisy available image, with $\weps=5\cdot 10^{-3}$.}
\label{fig:boatset}
\end{figure}
%In Figure \ref{fig:boatrec} we display the reconstructed images, obtained by different methods.%, and in Figure \ref{fig:boatrecz} we display blow-ups.
\begin{figure}[tbp]
\centering
\begin{tabular}{ccc}
%\hspace{-0.7cm}\textbf{{\small {(a)}}} &
%\hspace{-0.7cm}\textbf{{\small {(b)}}} &
%\hspace{-0.7cm}\textbf{{\small {(c)}}} \\
\hspace{-1.2cm}\textbf{{\small {GMRES}}} &
\hspace{-1.2cm}\textbf{{\small {TF-CGLS}}} &
\hspace{-1.2cm}\textbf{{\small {RP-GMRES}}} \\
\hspace{-1.2cm}\includegraphics[width=5.2cm]{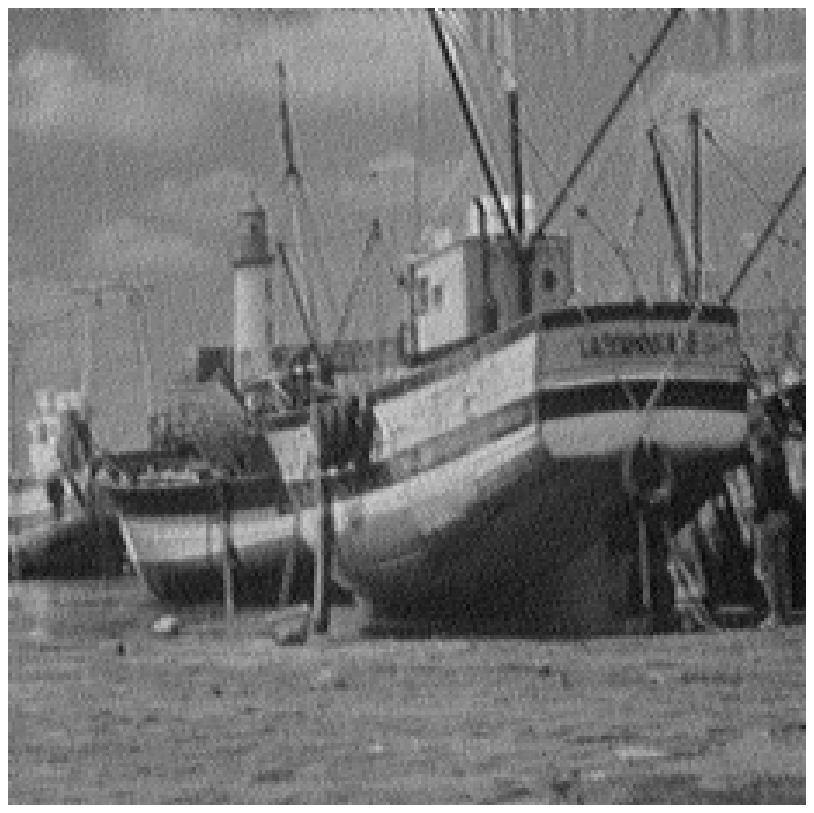} &
\hspace{-1.2cm}\includegraphics[width=5.2cm]{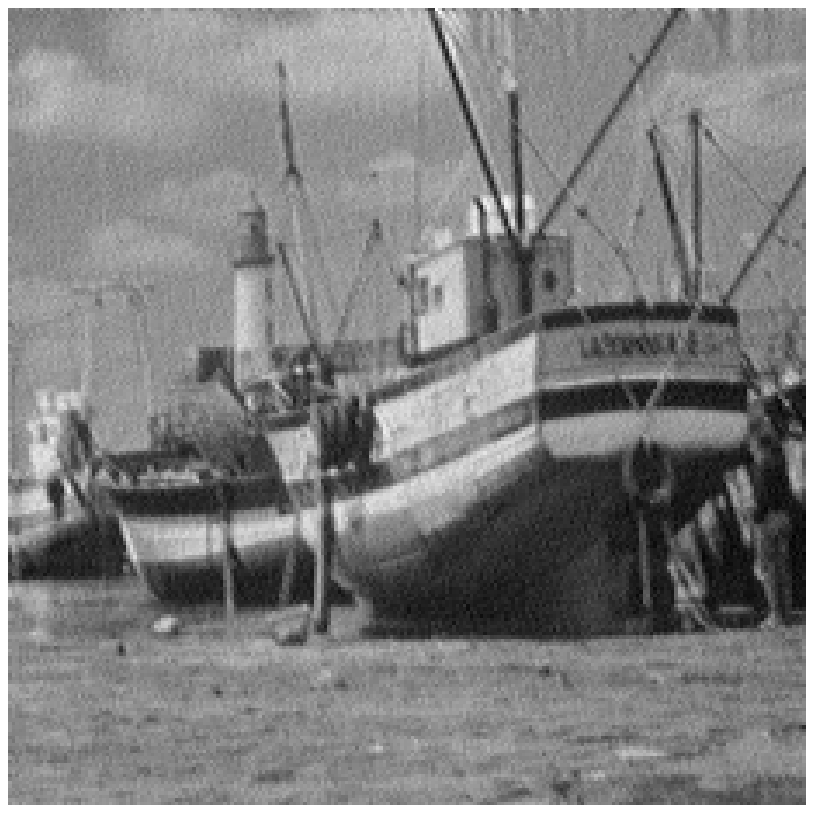} &
\hspace{-1.2cm}\includegraphics[width=5.2cm]{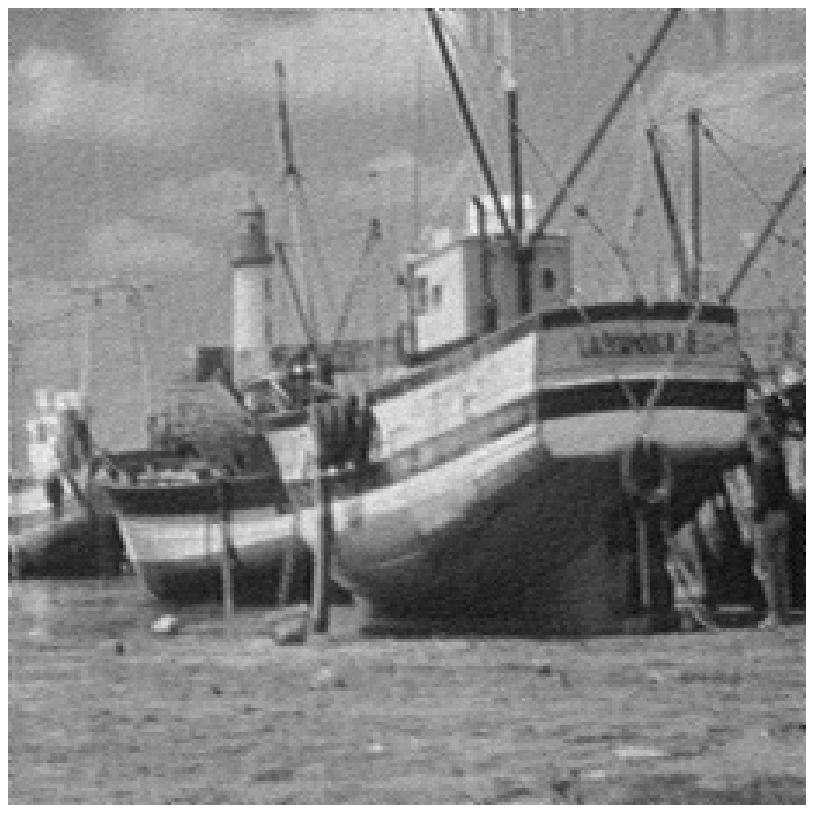}\vspace{-0.5cm}\\
%\hspace{-0.7cm}\textbf{{\small {(a')}}} &
%\hspace{-0.7cm}\textbf{{\small {(b')}}} &
%\hspace{-0.7cm}\textbf{{\small {(c')}}} \\
\hspace{-1.2cm}\includegraphics[width=5.2cm]{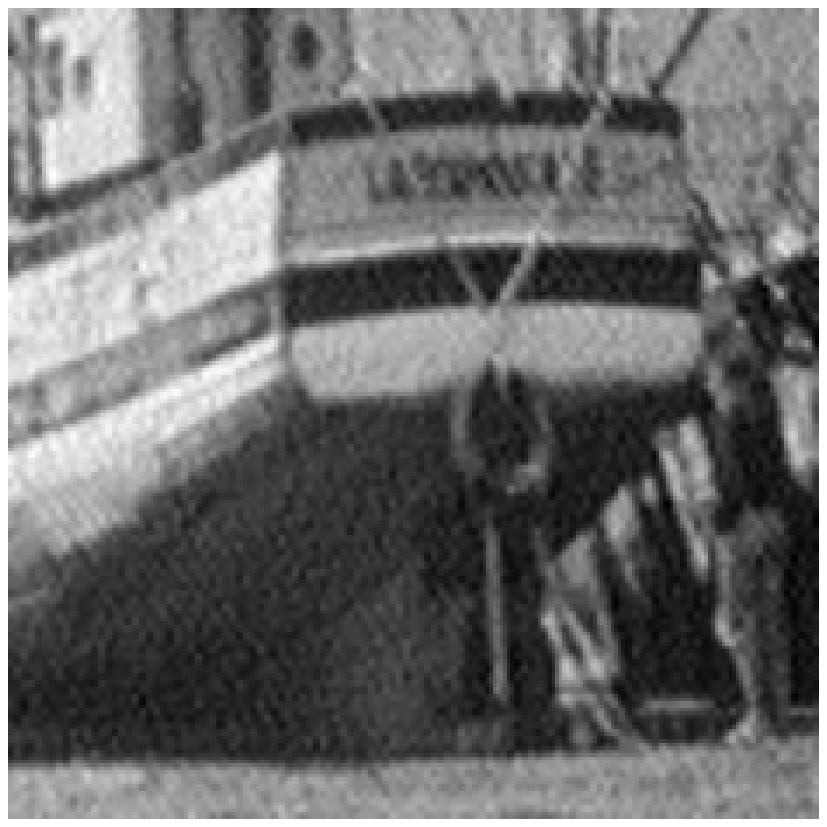} &
\hspace{-1.2cm}\includegraphics[width=5.2cm]{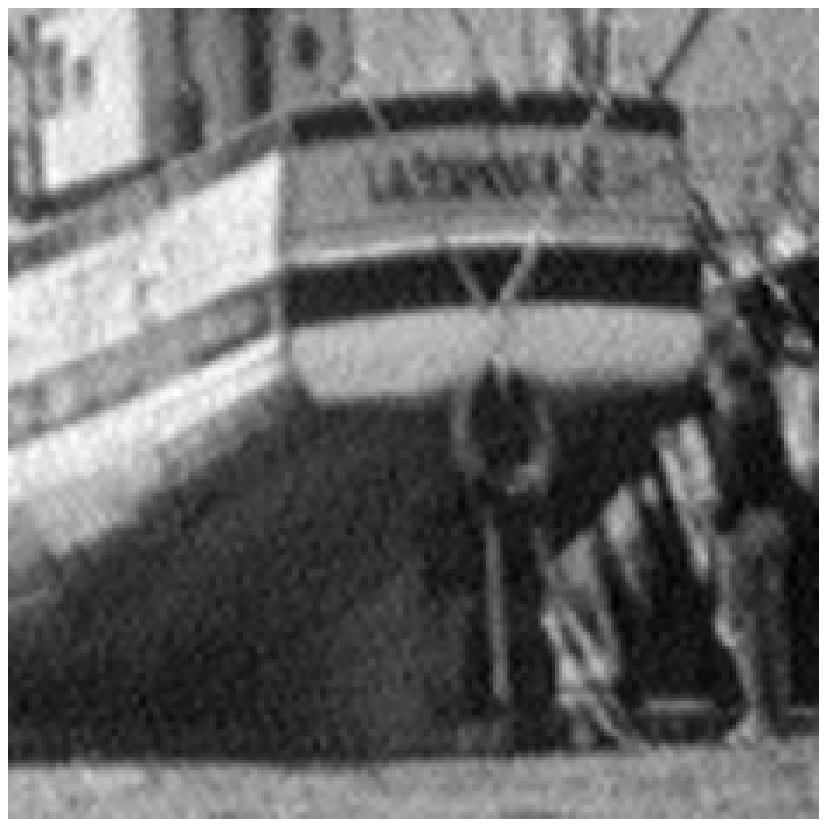} &
\hspace{-1.2cm}\includegraphics[width=5.2cm]{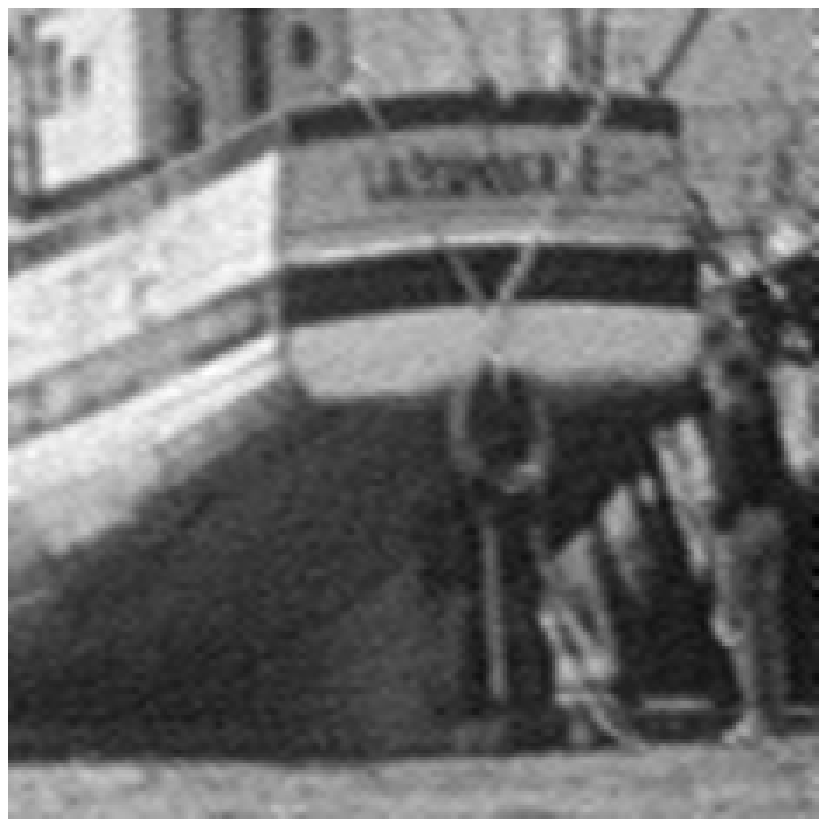}
\end{tabular}
%\caption{\textbf{(a)}, \textbf{(a')} standard GMRES method ($8.5215\cdot 10^{-2}$, $m=40$); \textbf{(b)}, \textbf{(b')} transpose-free CGLS-like method ($8.2764\cdot 10^{-2}$, $m=40$, $k=9$); \textbf{(c)}, \textbf{(c')} right-preconditioned GMRES ($6.0439\cdot 10^{-2}$, $m=25$).}
%\label{fig:boatrec}
\caption{The lower row displays blow-ups ($200\%$) of the restored images in the upper row. From left to right: standard GMRES method ($8.5215\cdot 10^{-2}$, $m=40$); TF-CGLS method ($8.2764\cdot 10^{-2}$, $m=40$, $k=9$); right-preconditioned GMRES ($6.0439\cdot 10^{-2}$, $m=25$).}
\label{fig:boatrec}
\end{figure}
With respect to the previous experiment, all the methods perform more iterations, due to the lower amount of noise chosen for this problem. The computational  cost of the GMRES, TF-CGLS, and RP-GMRES methods is dominated by 40, 44, and 50 matrix-vector products, respectively. By visually inspecting the images in Figure \ref{fig:boatrec}, we can see the GMRES solution to bear some motion artifacts, as the restored image displays some shifts in the diagonal directions, i.e., in the direction of the motion blur (this agrees with the arguments presented in \cite{DMR15}). These spurious effects are not so pronounced in the TF-CGLS restoration, as multiplication by $\Amp$ enforces some symmetry in the original problem; it should be noted that, for this experiment, both GMRES and TF-CGLS have approximately the same computational cost. The best reconstruction is delivered by RP-GMRES, which has however a higher computational cost.
\item \textbf{Atmospheric blur.} The test data for this experiment are displayed in Figure \ref{fig:satset}. The PSF, of size $256\times 256$ pixels and available within \cite{RestTools}, models a realistic atmospheric blur. Reflective boundary conditions are imposed, so that multiplications with $A^T$ can be easily computed. The noise level is $\weps = 5\cdot 10^{-2}$. Figure \ref{fig:satrec} shows the best restorations achieved by the GMRES, the TF-CGLS, and the CGLS methods; relative errors and the corresponding number of iterations are displayed in the caption. Also for this test problem, the (TF-)CGLS methods deliver much better solutions than the GMRES methods. As in the previous examples, TF-CGLS proves to be much more efficient than CGLS, as its computational cost is dominated by 18 matrix-vector products (versus the 80 matrix-vector products required by CGLS).
\begin{figure}[tbp]
\centering
\begin{tabular}{ccc}
\hspace{-1.2cm}\textbf{{\small {exact}}} &
\hspace{-1.2cm}\textbf{{\small {PSF}}} &
\hspace{-1.2cm}\textbf{{\small {corrupted}}} \\
\hspace{-1.2cm}\includegraphics[width=5.2cm]{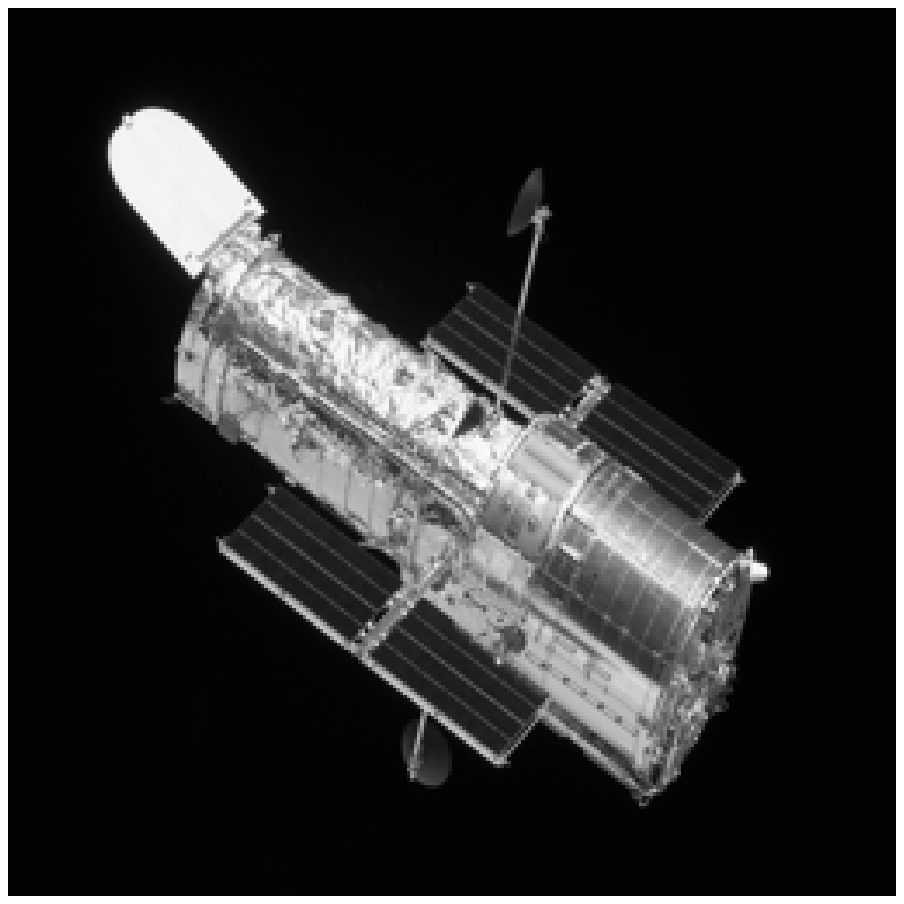} &
\hspace{-1.2cm}\includegraphics[width=5.2cm]{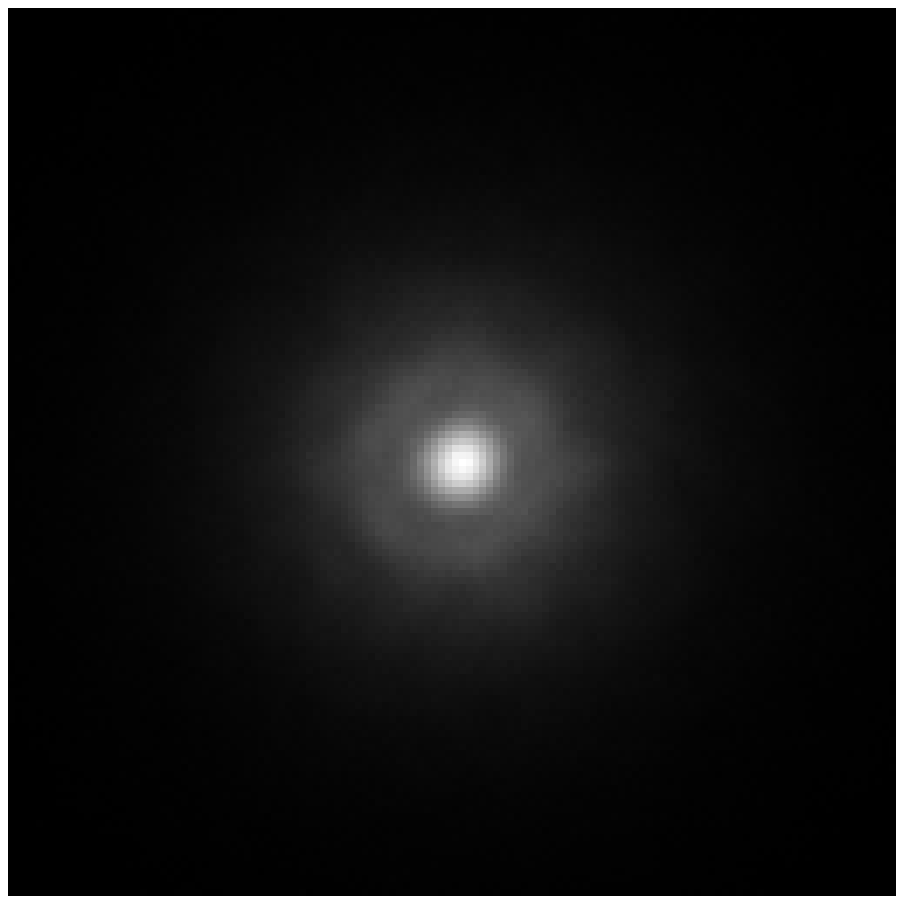} &
\hspace{-1.2cm}\includegraphics[width=5.2cm]{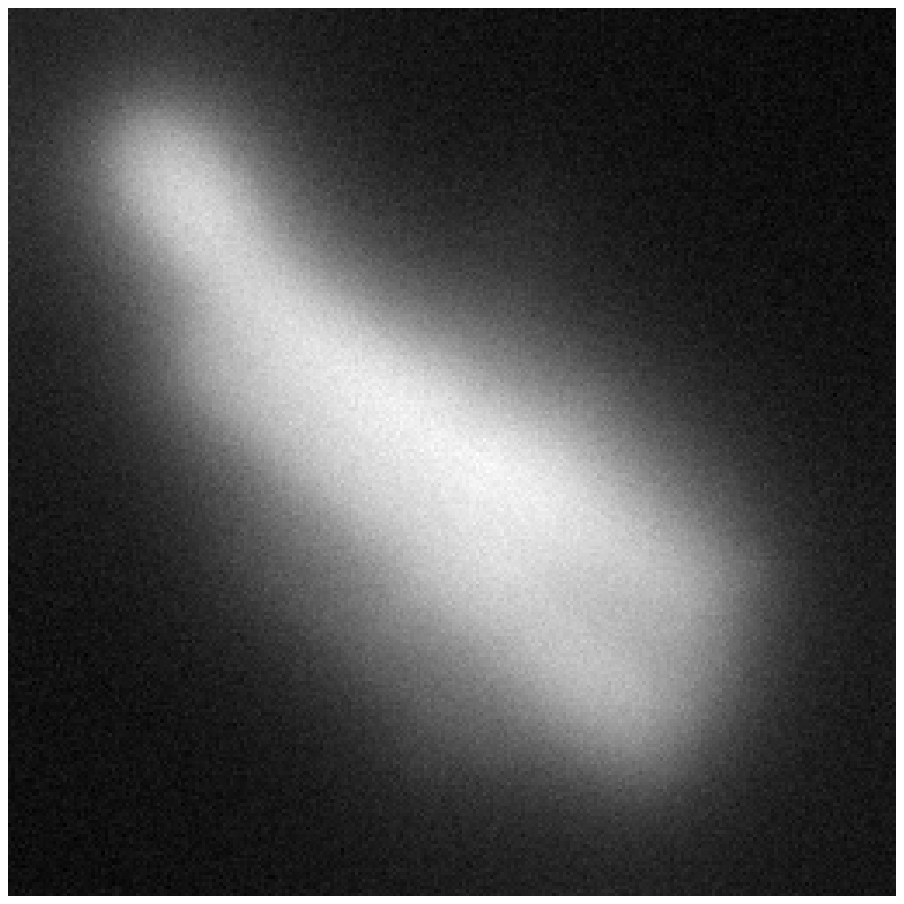}
\end{tabular}%
\caption{From left to right: exact image; blow-up ($200\%$) of the anisotropic Gaussian PSF; blurred and noisy available image, with $\weps=5\cdot 10^{-2}$.}
\label{fig:satset}
\end{figure}
\begin{figure}[tbp]
\centering
\begin{tabular}{ccc}
\hspace{-1.2cm}\textbf{{\small {GMRES}}} &
\hspace{-1.2cm}\textbf{{\small {TF-CGLS}}} &
\hspace{-1.2cm}\textbf{{\small {CGLS}}} \\
\hspace{-1.2cm}\includegraphics[width=5.2cm]{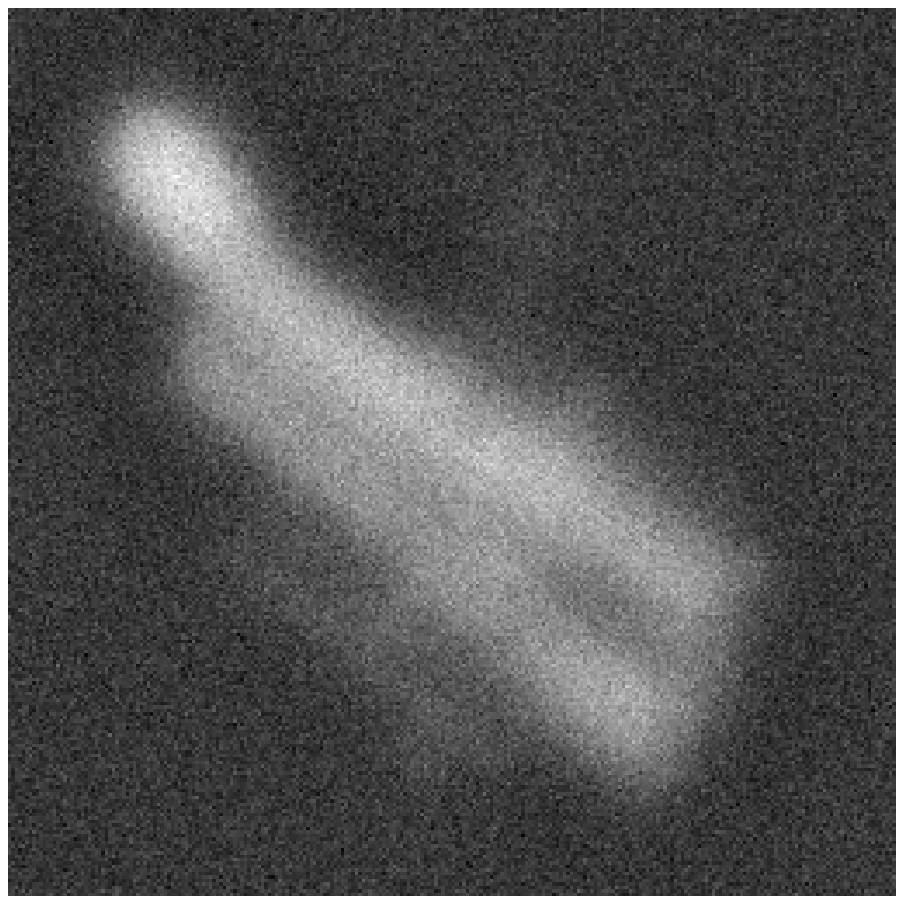} &
\hspace{-1.2cm}\includegraphics[width=5.2cm]{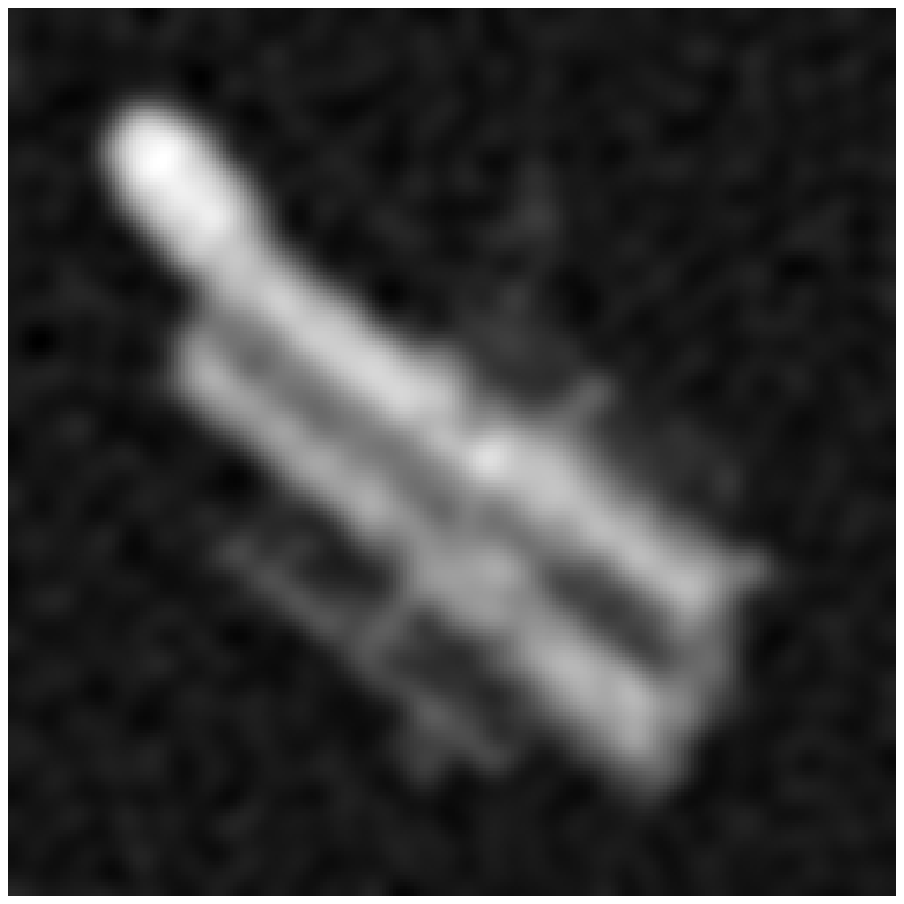} &
\hspace{-1.2cm}\includegraphics[width=5.2cm]{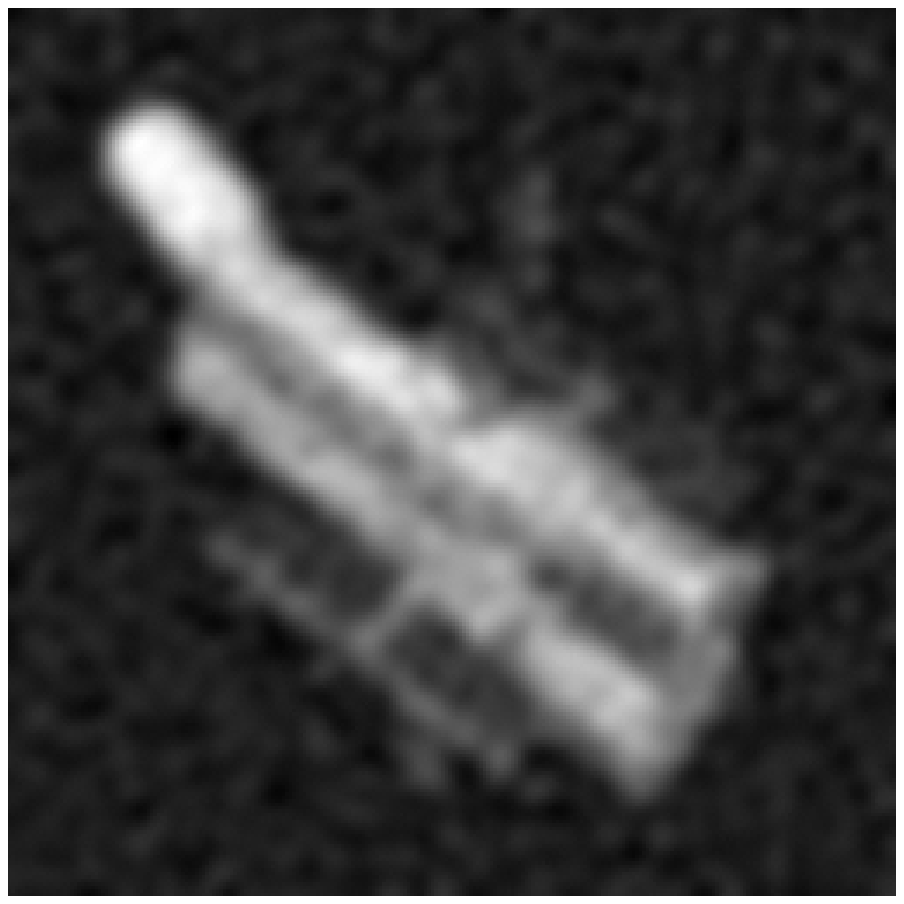}\vspace{-0.5cm}\\
\hspace{-1.2cm}\includegraphics[width=5.2cm]{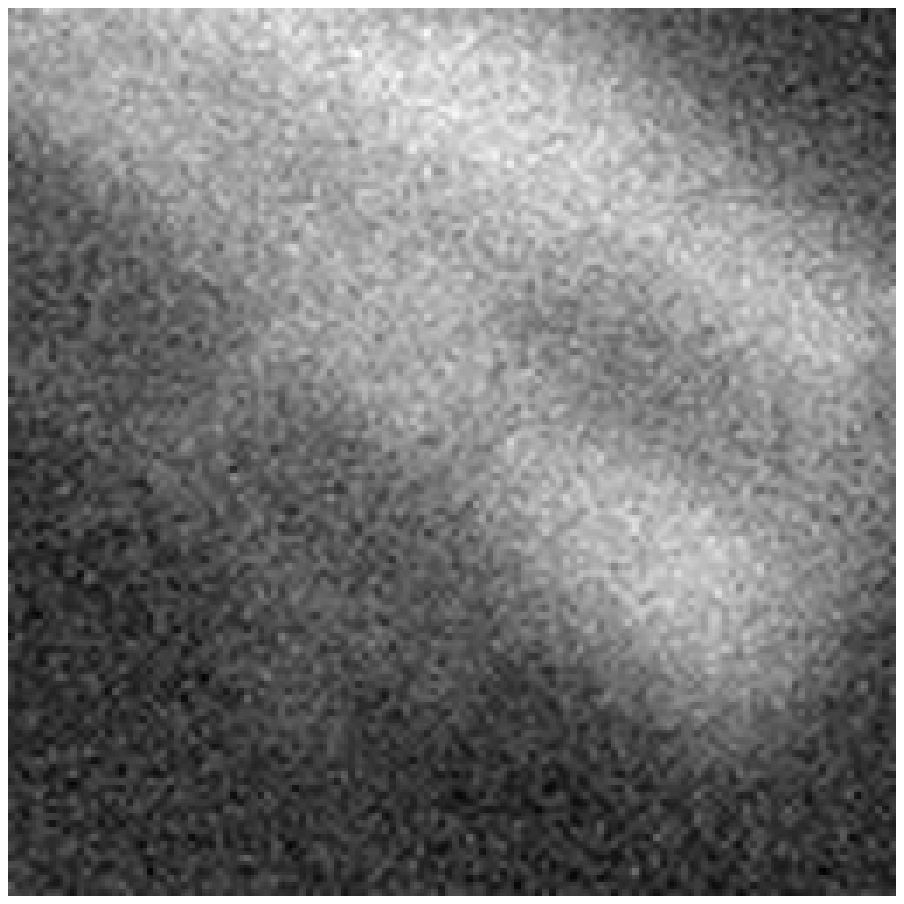} &
\hspace{-1.2cm}\includegraphics[width=5.2cm]{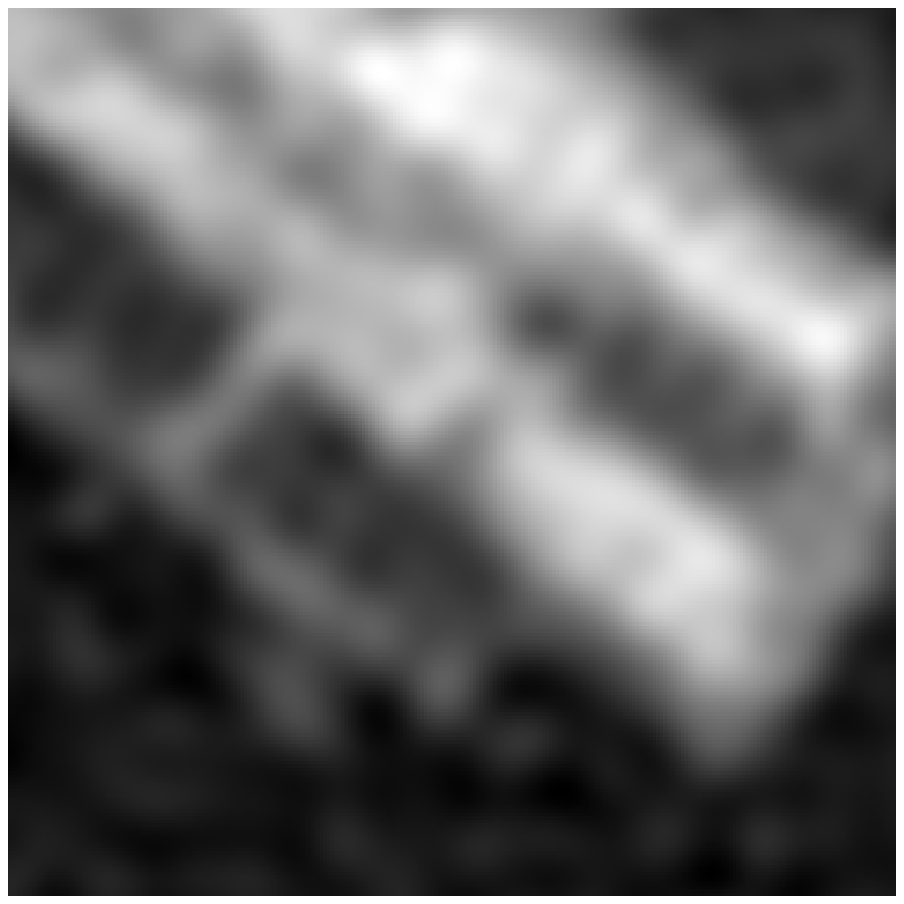} &
\hspace{-1.2cm}\includegraphics[width=5.2cm]{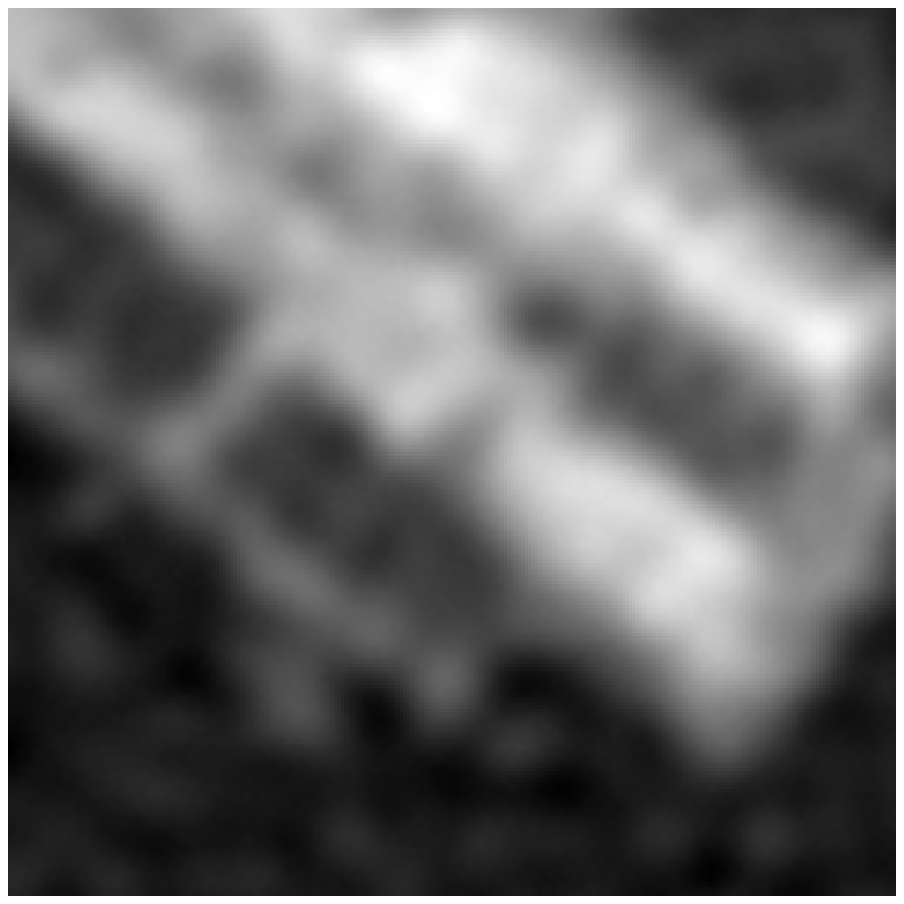}
\end{tabular}
\caption{The lower row displays blow-ups ($200\%$) of the restored images in the upper row. From left to right: standard GMRES method ($4.0018\cdot 10^{-1}$, $m=4$); TF-CGLS method ($2.7855\cdot 10^{-1}$, $m=18$, $k=5$); CGLS method ($2.7619\cdot 10^{-1}$, $m=40$).}
\label{fig:satrec}
\end{figure}

\end{enumerate}

\section{Conclusions}

\label{sect:final}
This paper presented a new class of transpose-free CG-like methods, which can be suitably and efficiently employed to regularize large-scale linear inverse problems. These methods are particularly meaningful when the transpose of the coefficient matrix is not easily available, and they represent a very valid alternative to the standard GMRES method, as they can successfully handle situations where the latter performs badly (e.g., when the SVD components of the original matrix $A$ are heavily mixed in the GMRES approximation subspace, or when the GMRES solutions diverge). When compared with CGLS or with other transpose-free solvers, the new TF-CGLS methods have a similar performance, with considerable computational savings.

Extensions of these transpose-free CG-like methods can be considered in order to incorporate some additional Tikhonov regularization at each iteration (so to further regularize and stabilize their behavior). This class of transpose-free CG-like methods can be probably employed to solve well-posed problems, as well, provided that some insight on the SVD behavior of the projected problems is available

%\paragraph{Paragraph headings} Use paragraph headings as needed. We need to refere to the formula
%\begin{equation}\label{Pythagoras}
%a^2+b^2=c^2
%\end{equation}
%but this formula is just a part of the continuing discussion
%\begin{equation*}
%c= \sqrt{a^2+b^2}
%\end{equation*}
%%
%% For tables use
%\begin{table}
%% table caption is above the table
%\caption{Please write your table caption here}
%\label{tab:1}       % Give a unique label
%% For LaTeX tables use
%\begin{tabular}{lll}
%\hline\noalign{\smallskip}
%first & second & third  \\
%\noalign{\smallskip}\hline\noalign{\smallskip}
%number & number & number \\
%number & number & number \\
%\noalign{\smallskip}\hline
%\end{tabular}
%\end{table}

%\begin{acknowledgements}
%If you'd like to thank anyone, place your comments here.
%\end{acknowledgements}

%% BibTeX users please use one of
%%\bibliographystyle{spbasic}      % basic style, author-year citations
%\bibliographystyle{spmpsci}      % mathematics and physical sciences
%%\bibliographystyle{spphys}       % APS-like style for physics
%\bibliography{TRArn}   % name your BibTeX data base

\begin{thebibliography}{}
%
\bibitem{BerNagy13}
Berisha, S., Nagy, J.G.: Iterative image restoration.
\newblock In: R.~Chellappa, S.~Theodoridis (eds.) Academic Press Library in
  Signal Processing, vol.~4, chap.~7, pp. 193--243. Elsevier (2014)
%
\bibitem{RRGMRESfirst}
Calvetti, D., Lewis, B., Reichel, L.: {GMRES}-type methods for inconsistent systems.
\newblock {L}inear {A}lgebra {A}ppl. \textbf{316}, 157--169 (2000)
%
\bibitem{CLR02-2}
Calvetti, D., Lewis, B., Reichel, L.: On the regularizing properties of the
  {GMRES} method.
\newblock Numer. Math. \textbf{91}, 605--625 (2002)
%
\bibitem{ATfirst}
Calvetti, D., Morigi, S., Reichel, L., Sgallari, F.: Tikhonov regularization
  and the {L}-curve for large discrete ill-posed problems.
\newblock J. Comput. Appl. Math. \textbf{123}, 423--446 (2000)
%
\bibitem{DMR15}
Donatelli, M., Martin, D., Reichel, L.: Arnoldi methods for image deblurring with anti-reflective boundary conditions.
\newblock Appl. Math. Comput. \textbf{253}, 135--150 (2015)
%
\bibitem{iDPC}
Gazzola, S., Novati, P.: Inheritance of the discrete Picard condition in Krylov subspace methods.
\newblock BIT \textbf{56}(3), 893--918 (2016)
%
\bibitem{Survey}
Gazzola, S., Novati, P., Russo, M.R.: {O}n {K}rylov projection methods and
  {T}ikhonov regularization.
\newblock Electron.~Trans.~Numer.~Anal. \textbf{44}, 83--123 (2015).
%
\bibitem{Han95}
Hanke, M.: {C}onjugate {G}radient {T}ype {M}ethods for {I}ll-{P}osed
  {P}roblems.
\newblock Longman, Essex, UK (1995)
%
\bibitem{LancReg}
Hanke, M.: On Lanczos based methods for the regularization of discrete ill-posed problems.
\newblock BIT \textbf{41}, 1008--1018 (2001).
%
\bibitem{RegT}
Hansen, P.C.: {R}egularization {T}ools: {A} {M}atlab package for analysis and
  solution of discrete ill-posed problems.
\newblock {N}umerical {A}lgorithms \textbf{6}, 1--35 (1994)
%
\bibitem{PCH98}
Hansen, P.C.: Rank-deficient and discrete ill-posed problems.
\newblock SIAM, Philadelphia, PA (1998)
%
\bibitem{PCH10}
Hansen, P.C.: Discrete inverse problems.
\newblock SIAM, Philadelphia, PA (2010)
%
\bibitem{SN}
Hansen, P.C., Jensen, T.K.: Noise propagation in regularizing iterations for image deblurring.
\newblock Electron. Trans. Numer. Anal. \textbf{31}, 204--220 (2008)
%
\bibitem{JH07}
Jensen, T.K., Hansen, P.C.: Iterative regularization with minimum-residual methods.
\newblock BIT \textbf{47}, 103--120 (2007)
%
\bibitem{Mo}
Moret, I.: A note on the superlinear convergence of {GMRES}.
\newblock {SIAM} {J}. {N}umer. {A}nal. \textbf{34}, 513--516 (1997)
%
\bibitem{RestTools}
Nagy J.G., Palmer, K.M., Perrone, L.: {I}terative methods for image deblurring: a {M}atlab object oriented Approach.
\newblock Numer. Algorithms \textbf{36}, 73--93 (2004)
%
\bibitem{N}
Novati, P.: {S}ome properties of the {A}rnoldi based methods for linear ill-posed problems.
\newblock To appear (2017)
%
\bibitem{NR14}
Novati, P., Russo, M.R.: A {GCV} based {A}rnoldi-{T}ikhonov regularization
  method.
\newblock BIT \textbf{54}(2), 501--521 (2014)
%
\bibitem{O'LS}
O'Leary, D.P., Simmons, J.A.: A bidiagonalization-regularization procedure for
  large scale discretizations of ill-posed problems.
\newblock SIAM J. Sci. Stat. Comp. \textbf{2}(4), 474--489 (1981)
%
\bibitem{Ri}
Ringrose, J.R.: Compact non-self-adjoint operators.
\newblock {V}an {N}ostrand {R}einhold {C}ompany, London (1971)
%
\bibitem{Saad}
Saad, Y.: {I}terative {M}ethods for {S}parse {L}inear {S}ystems, 2nd {E}d.
\newblock SIAM, Philadelphia, PA (2003)
%
\bibitem{ARBCfirst}
Serra-Capizzano, S.: {A note on anti-reflective boundary conditions and fast deblurring models}.
\newblock {SIAM J. Sci. Comput.} \textbf{25}, 1307--1325 (2003)
%
\end{thebibliography}

%% Non-BibTeX users please use

\end{document}